\documentclass[12pt]{amsart}
\usepackage{a4wide}
\usepackage{amsmath,amssymb,amsthm}
\usepackage{amsfonts,amsthm,amsmath,amssymb,amscd,color}
\usepackage{graphicx}
\usepackage{ulem}
\usepackage{enumerate}
\usepackage{parskip}

\usepackage{verbatim} 

\usepackage{amsmath,amssymb,amsthm}
\usepackage{enumerate}
\usepackage{parskip}
\usepackage{relsize}
\usepackage{graphicx}
\usepackage{color} 
\usepackage{tikz}  
\usepackage[iso]{isodateo}
\usepackage{bbm}
\usepackage[framemethod=TikZ]{mdframed}
\usepackage[many]{tcolorbox}
\newtcbtheorem[number within=section]{mytheo}{}%
{colback=white,colframe=magenta!50,fonttitle=\bfseries,
	enhanced,
	coltitle=black!75!black,
	attach boxed title to top left=
	{xshift=2ex,yshift=-2mm,yshifttext=-1mm},
	boxed title style={colframe=blue!50,
		colback=yellow!50}}{th}

\usepackage[toc,page]{appendix}
\newcommand{\RR}{\mathbb{R}}

\newcommand{\NN}{\mathbb{N}}

\newcommand{\TT}{\mathcal{T}}
\newcommand{\MM}{\mathcal{M}}

\newcommand{\PP}{P^1_{\frac{t}{m}}}
\newcommand{\PQ}{P^2_{\frac{t}{m}}}

\newcommand{\eps}{\varepsilon}

\newcommand{\CP}{\mathcal{P}}
\newcommand{\CM}{{\mathcal{M}}}

\newcommand{\BL}{\mathrm{BL}}

\newtheorem{thrm}{Theorem}[section]
\newtheorem{prop}[thrm]{Proposition}

\newtheorem{lemma}[thrm]{Lemma}
\newtheorem{clry}[thrm]{Corollary}

\newtheorem{remark}[thrm]{Remark}
\newtheorem{assumption}{Assumption}
\newtheorem{assumptionkw}{Assumption KW}
\newtheorem{assumptionCC2}{Assumption CG}

\newcommand{\smfrac}[2]{\mbox{$\frac{#1}{#2}$}}

% Equations numbered within sections
\makeatletter
\@addtoreset{equation}{section}

%\newcommand{\proof}{\noindent{\sc Proof}}

%Euler
\newcommand{\bes}{\begin{displaymath}}
\newcommand{\ees}{\end{displaymath}}
\newcommand{\be}{\begin{equation}}
\newcommand{\ee}{\end{equation}}
\newcommand{\ba}{\begin{eqnarray}}
\newcommand{\ea}{\end{eqnarray}}
\newcommand{\bas}{\begin{eqnarray*}}
	\newcommand{\eas}{\end{eqnarray*}}
\newcommand{\@Bbb}[1]{\ensuremath{\Bbb #1}}

\newcommand{\B}{{\@Bbb B}}
\newcommand{\C}{{\@Bbb C}}
\newcommand{\E}{{\@Bbb E}}
\newcommand{\F}{{\@Bbb F}}
\newcommand{\G}{{\@Bbb G}}
\renewcommand{\P}{{\@Bbb P}}

\newcommand{\Q}{{\@Bbb Q}}
\newcommand{\bQ}{{\@Bbb Q}}
\newcommand{\N}{{\@Bbb N}}
\newcommand{\R}{{\@Bbb R}}
\newcommand{\T}{{\@Bbb T}}
\newcommand{\bbR}{{\@Bbb R}}
\newcommand{\W}{{\@Bbb W}}
\newcommand{\Z}{{\@Bbb Z}}
\newcommand{\bbZ}{{\@Bbb Z}}

%caligraphic
\newcommand{\@s}[1]{\ensuremath{\mathcal #1}}
\newcommand{\cA}{\@s A}
\newcommand{\cB}{\@s B}
\newcommand{\cC}{\@s C}
\newcommand{\cD}{\@s D}
\newcommand{\cE}{\@s E}
\newcommand{\cF}{\@s F}
\newcommand{\cG}{\@s G}
\newcommand{\cH}{\@s H}
\newcommand{\cI}{\@s I}
\newcommand{\cJ}{\@s J}

\newcommand{\cK}{\@s K}
\newcommand{\cL}{\@s L}
\newcommand{\cN}{\@s N}
\newcommand{\cM}{\@s M}
\newcommand{\cO}{\@s O}
\newcommand{\cP}{\@s P}
\newcommand{\cQ}{\@s Q}
\newcommand{\cR}{\@s R}
\newcommand{\cS}{\@s S}
\newcommand{\cT}{\@s T}
\newcommand{\cU}{\@s U}
\newcommand{\cV}{\@s V}
\newcommand{\cW}{\@s W}
\newcommand{\cX}{\@s X}
\newcommand{\cY}{\@s Y}
\newcommand{\cZ}{\@s Z}

\def\qed{\mbox{$\square$}}
%bold math
\newcommand{\@bm}[1]{\ensuremath{\mathbf #1}}
\newcommand{\bma}{\@bm a}\newcommand{\bmA}{\@bm A}
\newcommand{\bmb}{\@bm b}\newcommand{\bmB}{\@bm B}
\newcommand{\bmc}{\@bm c}\newcommand{\bmC}{\@bm C}
\newcommand{\bmd}{\@bm d}\newcommand{\bmD}{\@bm D}
\newcommand{\bme}{\@bm e}
\newcommand{\bmf}{\@bm f}\newcommand{\bmF}{\@bm F}
\newcommand{\bmg}{\@bm g}\newcommand{\bmG}{\@bm G}
\newcommand{\bmh}{\@bm h}\newcommand{\bmH}{\@bm H}
\newcommand{\bmi}{\@bm i}\newcommand{\bmI}{\@bm I}
\newcommand{\bmj}{\@bm j}
\newcommand{\bmk}{\@bm k}\newcommand{\bmK}{\@bm K}
\newcommand{\bml}{\@bm l}
\newcommand{\bmm}{\@bm m}\newcommand{\bmM}{\@bm M}
\newcommand{\bmn}{\@bm n}
\newcommand{\bmo}{\@bm o}
\newcommand{\bmp}{\@bm p}
\newcommand{\bmq}{\@bm q}\newcommand{\bmQ}{\@bm Q}
\newcommand{\bmr}{\@bm r}
\newcommand{\bms}{\@bm s}\newcommand{\bmS}{\@bm S}
\newcommand{\bmt}{\@bm t}
\newcommand{\bmu}{\@bm u}\newcommand{\bmU}{\@bm U}
\newcommand{\bmw}{\@bm w}\newcommand{\bmW}{\@bm W}
\newcommand{\bmv}{\@bm v}\newcommand{\bmV}{\@bm V}
\newcommand{\bmx}{\@bm x}\newcommand{\bmX}{\@bm X}\newcommand{\bx}{\@bm x}
\newcommand{\bmy}{\@bm y}\newcommand{\bmY}{\@bm Y}\newcommand{\by}{\@bm y}
\newcommand{\bmz}{\@bm z}\newcommand{\bmZ}{\@bm Z}

\newcommand{\bmzero}{\@bm 0}

%math gothic
\newcommand{\@g}[1]{\ensuremath{\mathfrak #1}}
\newcommand{\gA}{\@g A}
\newcommand{\gD}{\@g D}
\newcommand{\gJ}{\@g J}
\newcommand{\gF}{\@g F}
\newcommand{\gM}{\@g M}
\newcommand{\gR}{\@g R}

\newcommand{\Lip}{\mathop{\mathrm{Lip}}}

\newcommand{\commentout}[1]{{}}

\newcommand{\CF}{\mathcal{F}}

%\newcommand{\Lip}{{\mathrm{Lip}}}
%\newcommand{\ind} {{1\!\!1}}

%\newcommand{\supp}{{\mathrm{supp}}}

  % triple norm

%  Re-declaration of \makeatletter; no @-expressions may be used from now on
\makeatother
%**********************************************
%**********************************************
%**********************************************
%**********************************************
\begin{document}
	
	\title[Lie-Trotter product formula for Markov semigroups]{Lie-Trotter product formula for locally equicontinuous and tight Markov semigroups}
	\author{Sander C. Hille}
	\address{Mathematical Institute, Leiden University, P.O. Box 9512, 2300 RA Leiden, The Netherlands, (SH, MZ)}
	\email{\{shille,m.a.ziemlanska\}@math.leidenuniv.nl}
	\author{Maria A. Ziemla\'nska}

	\subjclass[2000]{37A30, 47D07, 47N40, 37M25} % 92B05,37A30,60J20
	\keywords{Lie-Trotter product formula, Markov semigroups, commutator conditions}
	\thanks{The work of MZ has been partially supported by a Huygens Fellowship of Leiden University.}
	%\date\today
	\maketitle

	\begin{abstract}
	In this paper we prove a Lie-Trotter product formula for Markov semigroups in spaces of measures. We relate our results to "classical" results for strongly continuous linear semigroups on Banach spaces or Lipschitz semigroups in metric spaces and show that our approach is an extension of existing results. As Markov semigroups on measures are usually neither strongly continuous nor bounded linear operators for the relevant norms, we prove the convergence of the Lie-Trotter product formula assuming that the semigroups are locally equicontinuous and tight. A crucial tool we use in the proof is a Schur-like property for spaces of measures. 
	\end{abstract}
	
	\section{Introduction}

The main purpose of this paper is to generalize the Lie-Trotter product formula for strongly continuous linear semigroups in a Banach space to Markov semigroups on spaces of measures. The Lie-Trotter formula asserts the existence and properties of the limit
\[\lim_{n\to \infty} \left[S^1_{\frac{t}{n}}S^2_{\frac{t}{n}}\right]^nx=:S_tx,\]
where $(S^1_t)_{t\geq 0}$ and $(S^2_t)_{t\geq 0}$ are strongly continuous semigroups of bounded linear operators. It may equally be considered as a statement considering the convergence of a \textit{switching scheme}. The key challenge is to overcome the difficulties that result from the observation that 'typically' Markov semigroups do not consist of bounded linear operators (in a suitable norm on the signed measures) nor need to be strongly continuous. Therefore, the available results do not apply. 

The Lie-Trotter product formula was originated by Trotter \cite{Trotter} in 1959 for strongly continuous semigroups, for which the closure of the sum of two generators was a generator of a semigroup given by the limit of the Lie-Trotter scheme, and generalized i.a. by Chernoff \cite{chernoff1974product} in 1974. This approach seems to be not general enough to be applicable in various numerical schemes however. As shown by Kurtz and Pierre in \cite{kurtz1980counterexample}, even if the sum of two generators is again a generator of strongly continuous semigroup, this semigroup may not be given by the limit of Lie-Trotter product formula as it may not converge. Consequently, the analysis of generators of semigroups can lead to non-convergent numerical splitting schemes. Hence, a different approach is needed. The analysis of commutator type conditions as in \cite{Kuhnemund_Wacker, Colombo2004} avoids considering generators and their domains and may be easier to verify. 

Splitting schemes were applied and played a very important role in numerical analysis and recently in the theory of stochastic differential equations to construct solutions of differential equations, e.g. work by Cox and  Van Neerven \cite{Cox_phd}. It was shown by Carrillo, Gwiazda and Ulikowska in \cite{Ulikowska_Carillo} that properties of complicated models, like structured population models, can be obtained by splitting the original model into simpler ones and analyzing them separately, which also leads to switching schemes of Lie-Trotter form. B\'atkai, Csom\'os and Farkas investigated Lie-Trotter product formulae for abstract nonlinear evolution equations with delay in \cite{Farkas_Batkai2017}, a general product formula for the solution of nonautonomous
abstract delay equations in \cite{Farkas_Batkai_2012} and analyzed the convergence of operator
splitting procedures in \cite{BATKAI2013315}. 

Our starting point is the conditions for convergence of the Lie-Trotter product formula formulated by K$\ddot{\mathrm{u}}$hnemund and Wacker in \cite{Kuhnemund_Wacker}. This result appears to be a very useful tool in proving the convergence of the Lie-Trotter scheme without the need to have knowledge about generators of the semigroups involved. However, the semigroups considered by K\"uhnemund and Wacker are assumed to be strongly continuous.
We extend K$\ddot{\mathrm{u}}$hnemund and Wacker's case to semigroups of Markov operators on spaces of measures and present weaker sufficient conditions for convergence of the switching scheme.  Our method of proof builds on \cite{Kuhnemund_Wacker}, while the specific commutator condition that we employ (Assumption \ref{Assumption_commutator}) is motivated by \cite{Colombo2004}.

The theory of Markov operators and Markov semigroups was studied by Lasota, Mackey, Myjak and Szarek in the context of fractal theory \cite{SzarekMyjak,Lasota_Mackey}, iterated function systems and stochastic differential equations \cite{LASOTA2006513}. Markov semigroups acting on spaces of (separable) measures are usually not strongly continuous. The local equicontinuity (in measures) and tightness assumptions we employ are less restrictive and follow from strong continuity. The concept of equicontinuous families of Markov operators can be found in e.g. Meyn and Tweedie \cite{Meyn:2009:MCS:1550713}. Also, Worm in \cite{Worm} extends results of Szarek to families of equicontinuous Markov operators. 

The outline of the paper is as follows: in Section \ref{section_main_theorems} we present main results of this paper. Theorem \ref{main} in Section \ref{section_main_theorems} is the convergence theorem and is the most important result in the paper. The other important and non-trivial result is Theorem \ref{Theorem_composition}. Section \ref{section preliminaries} introduces Markov operators and Markov-Feller semigroups on the space of signed Borel measures $\MM(S)$, investigates their topological properties and consequences of equicontinuity and tightness of family of Markov operators. 
In Section \ref{section composition} we give tools to prove Theorem \ref{Theorem_composition}, i.e. that a composition of equicontinuous and tight families of Markov operators is again an equicontinuous and tight family. This result is quite delicate and seems like it was not considered in the literature before. We also provide a proof of the observation in Lemma \ref{precompact} which says that a family of equicontinuous and tight family of Markov operators on a precompact subset of positive measures is again precompact.  The proof of Theorem \ref{Theorem_composition} can be found in Appendix \ref{proof_theorem_composition}.\\
In Section \ref{Lie-Trotter} we  prove the convergence of the Lie-Trotter product formula for Markov operators. We provide more general assumptions then those provided in the K\"uhnemund-Wacker paper (see \cite{Kuhnemund_Wacker}). As our semigroups are not strongly continuous and usually not bounded, we use the concept of (local) equicontinuity (see e.g. Chapter 7 in \cite{Worm}). This allows us to define a new admissible metric $d_\mathcal{E}$ and a new $\|\cdot\|_{\BL,d_\mathcal{E}}$-norm dependent on the operators and the original metric $d$ on $S$. The crucial assumption is the Commutator Condition Assumption \ref{Assumption_commutator}.\\ To prove convergence of our scheme under Assumptions \ref{Assumption_equicontinuity_tightness}-\ref{Assumption_4_extended_commutator} we use a Schur-like property for signed measures, see \cite{Schur_like}, which allows us to prove weak convergence of the formula and conclude the strong/norm convergence.  In  Section \ref{subsection technical lemmas} we show crucial technical lemmas. The proofs of most lemmas from Section \ref{subsection technical lemmas} can be found in the Appendices \ref{App_2_Proof_of_Lemma_lem_est}-\ref{App_3_Proof_of_Lemma_lem_2 new}. In Section \ref{subsection technical lemmas} several useful properties of the limit operators that result from the converging Lie-Trotter formula are derived. \\
Section \ref{section relations to literature} shows that our approach is a generalization of K\"uhnemund-Wacker \cite{Kuhnemund1} and Colombo-Corli \cite{Colombo2004} cases. We show that if we consider Markov semigoups coming from lifts of deterministic operators, then the K\"uhnemund-Wacker and Colombo-Corli assumptions imply our assumptions and their convergence results of the Lie-Trotter formula or switching scheme follows from our main convergence result. 
	\section{Main theorems}
		\label{section_main_theorems}
Let $S$ be a Polish space, i.e. a separable completely metrizable topological space, see \cite{Worm}. Any metric $d$ that metrizes the topology of $S$ such that $(S,d)$ is separable and complete is called \textit{admissible}.  Let $d$ be an admissible metric on $S$. Following \cite{Dudley1966}, we denote the vector space of all real-valued Lipschitz functions on $(S,d)$ by $\Lip(S,d)$. For $f\in\Lip(S,d)$ we denote the Lipschitz constant of $f$ by
\begin{align}
\nonumber
\label{lipschitz_constant}
|f|_{L,d}:=\sup\left\{ \frac{|f(x)-f(y)|}{d(x,y)}:x,y\in S, x\not =y \right\} \end{align}  $\BL(S,d)$ is the subspace of bounded functions in $\Lip(S,d)$. Equipped with the bounded Lipschitz norm 
\[\|f\|_{\BL,d}:=\|f\|_\infty+|f|_{L,d}\] it is a Banach space, see \cite{Dudley1966}.
The vector space of finite signed Borel measures on $S$, $\MM(S)$, embeds into the dual of $(\BL(S), \|\cdot\|_{\BL,d})$, see \cite{Dudley1966}, thus introducing the dual bounded Lipschitz norm
$\|\cdot\|_{\BL,d}^*$ on $\MM(S)$ \begin{equation}\label{BL_norm}\|\mu\|_{\BL,d}^*:=\sup\left\{|\langle \mu,f\rangle|:f\in \mathrm{\BL}(S,d),\|f\|_{\BL,d}=\|f\|_\infty+|f|_{L,d}\leq 1\right\},\end{equation} for which the space becomes a normed space. It is not complete unless $(S,d)$ is uniformly discrete (see \cite{Worm}, Corollary 2.3.14). The cone $\MM^+(S)$ of positive measures in $\MM(S)$ is closed \cite{Worm,Dudley1966}. $\mathcal{P}(S)$ is the convex subset of $\MM^+(S)$ of probability measures.  The topology on $\MM(S)$ induced by $\|\cdot\|_{\BL,d}^*$ is weaker then the norm topology associated with the total variation norm $\|\mu\|_{TV}:=\mu^+(S)+\mu^-(S)$, where $\mu=\mu^+-\mu^-$ is the Jordan decomposition of $\mu$ (see Bogachev I, \cite{bogachev_1}, p.176). \\
 We define a Markov operator on $S$ to be a map $P:\MM^+(S)\to\MM^+(S)$ such that
\begin{itemize}
	\item [(i)] $P$ is additive and $\RR_+$-homogeneous;
	\item [(ii)] $\|P\mu\|_{TV}=\|\mu\|_{TV}$ for all $\mu\in\MM^+(S)$.
\end{itemize}

 Let $(P_\lambda)_{\lambda\in\Lambda}$ be a family of Markov operators.\\
Following Lasota and Szarek \cite{LASOTA2006513}, and Worm \cite{Worm}, we say that $(P_\lambda)_{\lambda\in\Lambda}$ is\textit{ equicontinuous} at $\mu\in\MM^+(S)$  if for every $\eps>0$ there exists $\delta>0$ such that $\left\|P_\lambda\mu-P_\lambda\nu\right\|^*_{\BL,d}<\eps$ for every $\nu\in\MM^+(S)$ such that $\|\mu-\nu\|_{\BL,d}^*<\delta$ and for every $\lambda\in\Lambda$.
$(P_\lambda)_{\lambda\in\Lambda}$ is called equicontinuous if it is equicontinuous at every $\mu\in\MM^+(S)$.
We will examine properties of space of bounded Lipschitz functions is Section \ref{section preliminaries}.

Let $\Theta\subset\mathcal{P}(S)$. Following \cite{bogachev_2} we call $\Theta$ \textit{uniformly tight} if for every $\epsilon>0$ there exists a compact set $K_\epsilon\subset S$ such that $\mu(K_\epsilon)\geq 1-\epsilon$ for all $\mu\in \Theta$.

The following theorem is a crucial tool for proving convergence of Lie-Trotter scheme for Markov semigroups but also an important and non-trivial result on its own. Proof of Theorem \ref{Theorem_composition} can be found in Section \ref{proof_theorem_composition}. 
	\begin{thrm}
		\label{Theorem_composition}
		Let $(P_\lambda)_{\lambda\in\Lambda}$, $(Q_\gamma)_{\gamma\in\Gamma}$	be equicontinuous families of Markov operators on $(S,d)$. Assume that $(Q_\gamma)_{\gamma\in\Gamma}$  is tight. Then the family $\left\{P_\lambda Q_\gamma:\lambda\in\Lambda, \gamma\in\Gamma\right\}$ is equicontinuous on $(S,d)$. Moreover, if $(P_\lambda)_{\lambda\in\Lambda}$ is tight, then the family $\left\{P_\lambda Q_\gamma:\lambda\in\Lambda, \gamma\in\Gamma\right\}$ is tight on $(S,d)$. 
	\end{thrm}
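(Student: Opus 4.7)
The two conclusions share a common strategy: for each fixed $\mu\in\MM^+(S)$, I will control the composition by first studying how $(Q_\gamma)_{\gamma\in\Gamma}$ moves $\mu$ and then how $(P_\lambda)_{\lambda\in\Lambda}$ acts on the orbit $K_\mu:=\{Q_\gamma\mu:\gamma\in\Gamma\}$. The crucial observation is that $K_\mu$ is precompact in $(\MM^+(S),\|\cdot\|^*_{\BL,d})$: it is uniformly tight by hypothesis on $(Q_\gamma)$, and norm-bounded because Markov operators preserve total variation, $\|Q_\gamma\mu\|_{\TV}=\|\mu\|_{\TV}$. On the Polish space $S$, Prokhorov's theorem combined with the fact that the BL-norm metrises weak convergence on norm-bounded subsets of $\MM^+(S)$ then yields precompactness, equivalently compactness of the closure $\overline{K_\mu}$.

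\textbf{Equicontinuity of the composed family.} Fix $\mu$ and $\eps>0$. The pivotal step is to upgrade the pointwise equicontinuity of $(P_\lambda)$ into \emph{uniform} equicontinuity over $\overline{K_\mu}$. For each $\tau\in\overline{K_\mu}$, equicontinuity at $\tau$ supplies $\delta(\tau)>0$ such that $\|\sigma-\tau\|^*_{\BL,d}<\delta(\tau)$ implies $\|P_\lambda\sigma-P_\lambda\tau\|^*_{\BL,d}<\eps/2$ for every $\lambda$. Compactness lets me select finitely many centres $\tau_1,\ldots,\tau_N$ with $\overline{K_\mu}\subset\bigcup_i B(\tau_i,\delta(\tau_i)/2)$ and set $\delta_0:=\min_i\delta(\tau_i)/2$. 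A routine triangle inequality then yields $\|P_\lambda\sigma-P_\lambda\tau\|^*_{\BL,d}<\eps$ whenever $\tau\in\overline{K_\mu}$, $\sigma\in\MM^+(S)$ and $\|\sigma-\tau\|^*_{\BL,d}<\delta_0$, uniformly in $\lambda$. Finally, equicontinuity of $(Q_\gamma)$ at $\mu$ applied with target $\delta_0$ furnishes $\delta>0$ such that $\|\mu-\nu\|^*_{\BL,d}<\delta$ implies $\|Q_\gamma\mu-Q_\gamma\nu\|^*_{\BL,d}<\delta_0$ for every $\gamma$. Taking $\tau:=Q_\gamma\mu\in K_\mu$ and $\sigma:=Q_\gamma\nu$ delivers $\|P_\lambda Q_\gamma\mu-P_\lambda Q_\gamma\nu\|^*_{\BL,d}<\eps$ uniformly in $(\lambda,\gamma)$, which is precisely equicontinuity of $\{P_\lambda Q_\gamma\}$ at $\mu$.

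\textbf{Tightness and the main obstacle.} For the second statement, assume additionally that $(P_\lambda)$ is tight. Since $K_\mu$ is precompact, Lemma~\ref{precompact} applies directly: the image set $\{P_\lambda\tau:\lambda\in\Lambda,\tau\in K_\mu\}$, and in particular the subset $\{P_\lambda Q_\gamma\mu:\lambda\in\Lambda,\gamma\in\Gamma\}$, is precompact in $\|\cdot\|^*_{\BL,d}$. A norm-bounded precompact subset of $\MM^+(S)$ on a Polish space is automatically uniformly tight (the converse direction of Prokhorov), establishing tightness of the composed family at $\mu$. I expect the subtlest point to be the upgrade from pointwise to uniform equicontinuity on $\overline{K_\mu}$: mere continuity of the individual $P_\lambda$'s would be insufficient, and both features---$\lambda$-uniformity (built into equicontinuity) and $\tau$-uniformity (extracted from compactness of $\overline{K_\mu}$)---must be combined carefully. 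This is precisely where the hypothesis that $(Q_\gamma)$ be tight enters the argument in an essential way, as without it the orbit $K_\mu$ need not be even totally bounded and the finite cover step breaks down.
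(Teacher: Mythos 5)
Your proof is correct, but it takes a genuinely different route from the paper's for the equicontinuity part. The paper argues on the dual side: it forms $\mathcal{E}=\{U_\lambda f\}$, builds the auxiliary admissible metric $d_{\mathcal{E}}$ so that $\mathcal{E}$ sits in the unit ball of $\BL(S,d_{\mathcal{E}})$, invokes Proposition \ref{prop:e-prop for one is e-prop for any} (which is where tightness of $(Q_\gamma)$ enters, via the metric-independence of equicontinuity for tight families) to get equicontinuity of $(Q_\gamma)$ for $d_{\mathcal{E}}$, and then closes the loop with Theorem \ref{Thrm_equivalence}(iii). You instead work directly on the measure side: tightness of $(Q_\gamma)$ plus conservation of total variation gives, via Prokhorov and the coincidence of the $\|\cdot\|_{\BL,d}^*$-topology with the weak topology on $\MM^+(S)$, precompactness of the orbit $K_\mu=\{Q_\gamma\mu\}$; a Lebesgue-number/finite-cover argument upgrades pointwise equicontinuity of $(P_\lambda)$ to uniformity over $\overline{K_\mu}$ (with the perturbation $\sigma$ allowed to range over all of $\MM^+(S)$, which is the point that makes the composition step legitimate since $Q_\gamma\nu$ need not lie in $\overline{K_\mu}$ — you handle this correctly); then equicontinuity of $(Q_\gamma)$ at $\mu$ finishes. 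This is essentially a strengthened form of the paper's Lemma \ref{L1}, which the paper states but does not actually use in this proof. Your route is more elementary and self-contained — it bypasses the compact-open topology and the Arzel\`a–Ascoli-type Theorems \ref{aa_kelley}–\ref{thrm:char rel compactness} entirely and makes transparent exactly where tightness of $(Q_\gamma)$ is indispensable — whereas the paper's duality argument reuses machinery (Proposition \ref{prop:e-prop for one is e-prop for any}, Theorem \ref{Thrm_equivalence}) that it needs elsewhere anyway, e.g.\ for the metric changes in Assumption \ref{Assumption_commutator}. For the tightness claim both arguments coincide: you both apply Lemma \ref{precompact} to the precompact orbit of $(Q_\gamma)$ and conclude uniform tightness from precompactness by the converse direction of Prokhorov.
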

We now present assumptions under which we prove convergence of Lie-Trotter scheme. Even though they may seem technical, they are motivated by existing examples of convergence of Lie-Trotter schemes with weaker assumptions then those in \cite{Kuhnemund_Wacker,Colombo2004} (see Section \ref{section relations to literature}).

Let $(P^1_t)_{t\geq 0}$ and $(P^2_t)_{t\geq 0}$ be Markov semigroups.	Let $\delta>0$. Define \[\mathcal{P}^i(\delta):=\{P^i_t:t\in [0,\delta]\} \text{ for } i=1,2,\]  
\[\mathcal{F}(\delta):=\left\{\left[P^1_{\frac{t}{n}}P^2_{\frac{t}{n}}\right]^n:n\in\NN, t\in[0,\delta]\right\}.\]  Let $d$ be an admissible metric on $S$ such that the following assumptions hold:

	\begin{assumption} 
		\label{Assumption_equicontinuity_tightness}There exists $\delta_1>0$ such that
	  $\mathcal{P}^1(\delta_1)$ and $\mathcal{P}^2(\delta_1)$ are equicontinuous and tight families of Markov operators on $(S,d)$.
	\end{assumption}

	\begin{assumption}  [Stability condition] \label{Assumption_stability} There exists $\delta_2>0$ such that  $\mathcal{F}(\delta_2)$ is an equicontinuous family of Markov operators on $(S,d)$. 
	\end{assumption}
Under Assumption \ref{Assumption_equicontinuity_tightness}, the operators $P_t^i, 0\leq t\leq\delta$, are Feller: there exist ${U_t^i:C_b(S)\to C_b(S)}$ such that $\langle P_t^i\mu,f\rangle=\langle \mu,U_t^if\rangle$ for every $f\in C_n(S),\mu_0\in\MM^+(S)$, $0\leq t\leq \delta$.

Let $f\in \BL(S,d)$ and consider
\begin{align}
\label{def_E}
\mathcal{E}(f):=\left\{U_s^2U_{s'}^1\left[U^2_{\frac{t}{n}}U^1_{\frac{t}{n}}\right]^nf:n\in\NN, s,s',t\in[0,\delta]\right\}
\end{align}
  By Theorem 7.2.2 in  \cite{Worm} or Theorem \ref{Thrm_equivalence} below, equicontinuity of the family $(P_\lambda)_{\lambda\in\Lambda}$ is equivalent to equicontinuity of the family $(U_\lambda f)_{\lambda\in\Lambda}$ for every $f\in \BL(S,d)$. Then, as we will show in Lemma \ref{lemma_equicontinuity_of_E}, $\mathcal{E}(f)$ is an equicontinuous family if $\delta\leq \min(\delta_1,\delta_2)$. It defines a new admissible metric on $S$:
\begin{equation}
\label{new_metric}
d_{\mathcal{E}(f)}(x,y):=d(x,y)\vee \sup_{g\in\mathcal{E}(f)}|g(x)-g(y)|,\quad \text{for}\quad x,y\in S.
\end{equation}

\begin{assumption}	[Commutator condition]
	\label {Assumption_commutator} There exists a dense convex  subcone $M_0$ of $\MM^+(S)_{\BL,d}$ that is invariant under $(P^i_t)_{t\geq 0}$ for $i=1,2$ and for every $f\in \BL(S,d)$ there exists $\delta_{3,f}>0$ such that for the admissible metric $d_{\mathcal{E}(f)}$ on $S$ there exists ${\omega_f:[0,\delta_{3,f}]\times M_0\to \RR_+}$ continuous, non-decreasing in the first variable, such that the Dini-type condition holds \begin{equation}\label{equation_commutator_condition}
	\int_0^{\delta_{3,f}} \frac{\omega_f(s,\mu_0)}{s}ds<+\infty\quad \text{for all }\quad \mu_0\in M_0, \text{ and }
	\end{equation}
	\[\left\|P^1_tP^2_t\mu_0-P^2_tP^1_t\mu_0\right\|^*_{\BL,d_{\mathcal{E}(f)}}\leq t\omega_f(t,\mu_0) \] for every $t\in [0,\delta_{3,f}], \mu_0\in M_0$. 
\end{assumption}
\begin{assumption}[Extended Commutator Condition] 
	\label{Assumption_4_extended_commutator}
	Assume that Assumption \ref{Assumption_commutator} holds and, in addition, for every $f\in \BL(S,d)$, there exists $\delta_{4,f}>0$ and for  $\mu_0\in M_0$ there exists $C_f(\mu_0)>0$ such that for every $t\in [0,\delta_{4,f}]$,
	\[\omega_f(t,P\mu_0)\leq C_f(\mu_0)\omega_f(t,\mu_0)\] for all $P\in\mathcal{P}^2(\delta_{4,f})\cdot \mathcal{F}(\delta_{4,f})\cdot\mathcal{P}^1(\delta_{4,f})$.
\end{assumption}
Now we can formulate the main theorem of this paper, which is the strong convergence of the Lie-Trotter scheme. The proof of Theorem \ref{main} can be found in Section \ref{proof_main}.
	\begin{thrm}
		\label{main}
		Let $(P^1_t)_{t\geq 0}$ and $(P^2_t)_{t\geq 0}$ be semigroups of  Markov operators. Assume that Assumptions \ref{Assumption_equicontinuity_tightness}-\ref{Assumption_4_extended_commutator} hold.  Then for every $t\geq 0$ there exists a unique Markov operator $\mathbb{\overline{P}}_t:\MM^+(S)\to \MM^+(S)$ such that for every $\mu\in\MM^+(S)$:
		\begin{equation}
		\label{eq_convergence}\left\|\left[P^1_{\frac{t}{n}}P^2_{\frac{t}{n}}\right]^n\mu-\mathbb{\overline{P}}_t\mu\right\|^*_{\BL,d}\to 0 \text{ as } n\to\infty\end{equation}  
		If, additionally, a single $\delta_{3,f}$, $\delta_{4,f}$, $C_f(\mu_0)$ and $\omega_f(\cdot,f)$ can be chosen to hold uniformly for $f\in\BL(S,d)$, $\|f\|_{\BL,d}\leq 1$, then convergence in (\ref{eq_convergence}) is uniform for $t$ in compact subsets of $\RR_+$.
	\end{thrm}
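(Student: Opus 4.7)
The plan is to prove convergence first on the invariant dense subcone $M_0$ for $t$ in an initial interval $[0,\delta]$ with $\delta := \min(\delta_1,\delta_2,\delta_{3,f},\delta_{4,f})$, then extend to all of $\MM^+(S)$ by density and equicontinuity, and to all $t\geq 0$ by a semigroup concatenation. Writing $\mathbb{P}^n_t := [P^1_{t/n}P^2_{t/n}]^n$, I would first establish that for each $\mu_0\in M_0$ and $f\in\BL(S,d)$ the scalar sequence $\langle\mathbb{P}^n_t\mu_0,f\rangle$ is Cauchy, then invoke the Schur-like property from \cite{Schur_like}---whose hypotheses of uniform TV-boundedness (automatic by the Markov property) and uniform tightness (from Theorem \ref{Theorem_composition} iterated together with Assumptions \ref{Assumption_equicontinuity_tightness}--\ref{Assumption_stability}) are here available---to upgrade this weak Cauchyness to $\|\cdot\|^*_{\BL,d}$-Cauchyness, and define $\overline{\mathbb{P}}_t\mu_0 := \lim_n\mathbb{P}^n_t\mu_0$.

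The core of the proof is a commutator telescoping at dyadic scales. Setting $A_n := P^1_{t/n}P^2_{t/n}$, the semigroup property gives the algebraic identity
\[
A_n - A_{2n}^2 \;=\; P^1_{t/(2n)}\bigl(P^1_{t/(2n)}P^2_{t/(2n)} - P^2_{t/(2n)}P^1_{t/(2n)}\bigr)P^2_{t/(2n)},
\]
and the standard telescoping $A_n^n - A_{2n}^{2n} = \sum_{j=0}^{n-1} A_n^{n-1-j}(A_n - A_{2n}^2)(A_{2n}^2)^j$ then expresses the dyadic difference as a sum of $n$ conjugated commutators at scale $s := t/(2n)$. I would pair each summand with $f$, dualize the outer $A_n^{n-1-j}$ and the flanking $P^1_{t/(2n)}$ to produce a test function $U^1_{t/(2n)}(U^2_{t/n}U^1_{t/n})^{n-1-j}f$ which lies in the family $\mathcal{E}(f)$ of (\ref{def_E}); by the construction of $d_{\mathcal{E}(f)}$ in (\ref{new_metric}), every $g \in \mathcal{E}(f)$ has $d_{\mathcal{E}(f)}$-Lipschitz constant at most $1$ and sup-norm at most $\|f\|_\infty$, hence $\|\cdot\|_{\BL,d_{\mathcal{E}(f)}}$-norm at most $\|f\|_\infty+1$. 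The Commutator Condition (Assumption \ref{Assumption_commutator}) then bounds the $\|\cdot\|^*_{\BL,d_{\mathcal{E}(f)}}$-norm of the commutator acting on $P^2_s(A_{2n}^2)^j\mu_0$ by $s\,\omega_f(s,P^2_s(A_{2n}^2)^j\mu_0)$; since this intermediate measure lies in $\mathcal{P}^2(\delta)\cdot\mathcal{F}(\delta)\cdot\mathcal{P}^1(\delta)\mu_0$ (using the identity in $\mathcal{P}^1(\delta)$), the Extended Commutator Condition (Assumption \ref{Assumption_4_extended_commutator}) replaces this by $C_f(\mu_0)\omega_f(s,\mu_0)$. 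Summing the $n$ terms gives $|\langle\mathbb{P}^n_t\mu_0 - \mathbb{P}^{2n}_t\mu_0,f\rangle| \leq \tfrac{t}{2}C_f(\mu_0)(\|f\|_\infty+1)\,\omega_f(t/(2n),\mu_0)$, and iterating along $n = 2^k$, the Dini integrability $\int_0^{\delta_{3,f}}\omega_f(s,\mu_0)/s\,ds<\infty$ implies $\sum_k\omega_f(t/2^{k+1},\mu_0)<\infty$ by the integral test, so $\{\mathbb{P}^{2^k}_t\mu_0\}_k$ is weakly Cauchy against $f$; a parallel comparison against $\mathbb{P}^{nm}_t$ for suitable $m$ would handle non-dyadic indices.

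The hard part will be the coordinated bookkeeping demanded by this estimate: the enlarged metric $d_{\mathcal{E}(f)}$ is the device ensuring that the dualized test functions, after absorbing arbitrary iterates $A_n^{n-1-j}$, remain in a set of uniformly bounded Lipschitz constant (which would otherwise compound through the iterates), while Assumption \ref{Assumption_4_extended_commutator} is the device ensuring that the $\omega_f$-moduli remain uniformly anchored at the initial $\mu_0$ throughout the telescoping (rather than at an a priori uncontrolled sequence of intermediate measures); without the coordinated effect of these two, the sum of commutator costs would not close, and the Dini integrability would be useless. Once weak Cauchyness on $M_0$ is established, the upgrade to norm via the Schur-like property, the extension by density and equicontinuity of $\mathcal{F}(\delta)$ to all of $\MM^+(S)$, the transfer of the Markov-operator properties to the limit, and the extension from $[0,\delta]$ to $t\geq 0$ by concatenation are essentially routine; uniform convergence in $t$ on compacts when the Dini data are $f$-uniform is immediate from the displayed bound.
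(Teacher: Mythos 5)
Your proposal is correct and follows essentially the same route as the paper: your dyadic commutator telescoping is the $k=2$ case of the paper's Lemma \ref{lem_est}/Lemma \ref{lem_2 new}, the dyadic-subsequence Cauchy argument via the Dini condition is Lemma \ref{lemma subsequence}, the upgrade from weak to norm convergence via the Schur-like Theorem \ref{weak_implies_strong} and the extension from $M_0$ to $\MM^+(S)$ by density plus equicontinuity of $\mathcal{F}(\delta_2)$ are Proposition \ref{main theorem new} and Lemma \ref{lemma_Cauchy_seq}. The one step you leave as a sketch (passing from the dyadic subsequence to the full sequence via comparison with $\mathbb{P}^{nm}_t$) is exactly where the paper needs the general-$k$ telescoping rather than only $k=2$, but your computation generalizes directly, so this is the same proof.
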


\section{Preliminaries} 
\label{section preliminaries}
\subsection{Markov operators and semigroups}
We start with some preliminary results on Markov operators on spaces of measures, see \cite{Worm,ethier,LasotaMyjak}. Let $S$ be a Polish space,
 $P:\MM^+(S)\to\MM^+(S)$ a Markov operator.
 We extend $P$ to a positive bounded linear operator on $(\MM(S),\|\cdot\|_{TV})$ by $P\mu:=P\mu^+-P\mu^-$. $P$ is a bounded linear operatos on $\MM(S)$ for $\|\cdot\|_{TV}$. 'Typically' it is not bounded for $\|\cdot\|_{\BL,d}^*$. Denote by $\mathrm{BM}(S)$ the space of all bounded Borel measurable functions on $S$. Following \cite{Hille2009}, Definition 3.2 or \cite{SzarekMyjak} we will call a Markov operator $P$ \textit{regular} if there exists $U:\mathrm{BM}(S)\to \mathrm{BM}(S)$ such that
 \[\langle P\mu,f\rangle=\langle \mu,Uf\rangle \text{ for all }\mu\in\MM^+(S), f\in \mathrm{BM}(S).\]
Let $(S,\Sigma)$ be a measurable space. According to \cite{Worm}, Proposition 3.3.3, $P$ is regular if and only if
\begin{itemize}
 		\item [(a)] $x\mapsto P\delta_x(E)$ is measurable for every $E\in\Sigma$ and
 		\item [(b)] $P\mu(E)=\int_{S}P\delta_x(E) d\mu(x)$ for all $E\in\Sigma$.
\end{itemize}
 We call the operator $U:\mathrm{BM}(S)\to \mathrm{BM}(S)$ the \textit{dual operator} of $P$.\\
The Markov operator $P$ is a \textit{Markov-Feller operator} if it is regular and the dual $U$ maps $C_b(S)$ into itself.  A Markov semigroup $(P_t)_{t\geq 0}$ on $S$ is a semigroup of Markov operators on $\MM^+(S)$. The Markov semigroup is regular (or Feller) if all the operators $P_t$ are regular (or Feller). Then $(U_t)_{t\geq 0}$ is a semigroup on $\mathrm{BM}(S)$, which we call the \textit{dual semigroup}.
 \subsection{Topological preliminaries}
Following \cite{kelley1955general}, p.230, a topological space $X$ is a \textit{k-space} if for any subset $A$ of $X$ holds that if $A$ intersects each closed compact set in a closed set, then $A$ is closed.
According to  \cite{engelking1977general}, Theorem 3.3.20 every first-countable Hausdorff space is a $k$-space. Every metric space is first countable, hence also a $k$-space. In particular $(\MM^+(S),\|\cdot\|_{\BL,d}^*)$ is a $k$-space.\\
Let $\mathcal{F}$ be a family of continuous maps from a topological space $X$ to a metric space $(Y,d_Y)$. $\mathcal{F}$ is \textit{equicontinuous at point} $x\in X$ if for every $\eps>0$ there exists an open neighbourhood $U_\eps$ of $X$ in $X$ such that
\[d_Y(f(x),f(x'))<\eps\text{ for all } x'\in U_\eps, \forall f\in \mathcal{F}.\] 
A family $\mathcal{F}$ of maps is \textit{equicontinuous} if and only if it is \textit{equicontinuous at every point}. A family $\mathcal{F}$ of maps from a metric space $(X,d_X)$ to a metric space $(Y,d_Y)$ is \textit{uniformly equicontinuous} if for every $\eps>0$ there exists $\delta_\eps>0$ such that
\[d_Y(f(x),f(x'))<\eps\text{ for all } x,x'\in X\text{ such that } d_X(x,x')<\delta_\eps\text { for all } f\in\mathcal{F}.\]
\begin{lemma}
	\label{uniform_equicontinuity}
	Let $(K,d)$ be a compact metric space and $(Y,d_Y)$ a metric space. An equicontinuous family $\mathcal{F}\subset \mathcal{C}(K,Y)$ is uniformly equicontinuous.
\end{lemma}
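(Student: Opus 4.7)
This is a classical Arzelà–Ascoli-style statement, and the plan is to adapt the standard proof that a single continuous function on a compact metric space is uniformly continuous, taking care that the finite cover extracted at the end is chosen so that the resulting $\delta$ works simultaneously for every $f\in\mathcal{F}$. The key observation is that the equicontinuity hypothesis already gives a neighbourhood $U_x$ that is uniform in $f$; what compactness supplies is uniformity in $x$.

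\textbf{Steps.} Fix $\eps>0$. First, for each $x\in K$, equicontinuity at $x$ yields a radius $\delta_x>0$ such that $d_Y(f(x),f(x'))<\eps/2$ whenever $x'\in B_d(x,\delta_x)$ and $f\in\mathcal{F}$ (here I replace the abstract open neighbourhood $U_\eps$ by an open metric ball, which is allowed because $(K,d)$ is metric). Second, the open cover $\{B_d(x,\delta_x/2):x\in K\}$ of the compact space $K$ admits a finite subcover $B_d(x_1,\delta_{x_1}/2),\dots,B_d(x_N,\delta_{x_N}/2)$. Set
\[
\delta\ :=\ \tfrac{1}{2}\min_{1\leq i\leq N}\delta_{x_i}\ >\ 0.
\]
Third, given any $x,x'\in K$ with $d(x,x')<\delta$, pick $i$ with $x\in B_d(x_i,\delta_{x_i}/2)$; then
\[
d(x',x_i)\ \leq\ d(x',x)+d(x,x_i)\ <\ \delta+\delta_{x_i}/2\ \leq\ \delta_{x_i},
\]
so both $x$ and $x'$ lie in $B_d(x_i,\delta_{x_i})$. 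For every $f\in\mathcal{F}$ the triangle inequality then yields
\[
d_Y(f(x),f(x'))\ \leq\ d_Y(f(x),f(x_i))+d_Y(f(x_i),f(x'))\ <\ \eps/2+\eps/2\ =\ \eps,
\]
which is the required uniform equicontinuity.

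\textbf{Main obstacle.} There is really no serious obstacle here; the only minor subtlety is the passage from an abstract open neighbourhood (as in the definition of equicontinuity on a topological space) to a metric ball, together with the standard Lebesgue-number-style trick of halving the radii in the cover so that the chosen $\delta$ pushes a nearby point $x'$ back into the full ball around the chosen center $x_i$. Everything else is the routine finite subcover argument from compactness.
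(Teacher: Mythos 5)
Your proof is correct and follows essentially the same route as the paper's: pointwise equicontinuity gives balls uniform in $f$, a finite subcover with halved radii gives $\delta$, and the triangle inequality finishes. Your $\eps/2$ bookkeeping is in fact slightly cleaner than the paper's, which tacitly needs the same two-term triangle-inequality step at the end.
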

\begin{proof}
	Let $\eps>0$. For each $x\in K$ there exists an open ball $B_x(\delta_x)$, $\delta_x>0$ such that $d_Y(f(f),f(x'))<\eps$ for every $x'\in B_x(\delta_x)$ and $f\in\mathcal{F}$. By compactness of $K$, it is covered by finitely many balls, $B_{x_i}(\delta_{x_i}/2), i=1,\cdots,n$, say. Let $\delta:=\min_i\frac{\delta_{x_i}}{2}$. If $x,x'\in K$ are such that $d(x,x')<\delta$, then there exists $x_{i_0}$ such that $x\in B_{x_{i_0}}(\delta_{x_{i_0}}/2)$. Necessarily,
	\[d(x',x_{i_0})\leq d(x',x)+d(x,x_{i_0})<\delta+\delta_{x_{i_0}}/2<\delta_{x_{i_0}}.\] Thus, $d_Y(f(x),f(x'))<\eps$, proving the uniform equicontinuity on $K$. 
\end{proof}
For a family of maps $\mathcal{F}$ on $X$ and $x\in X$ we write $\mathcal{F}[x]:=\{f(x):f\in\mathcal{F}\}.$ Following \cite{kelley1955general} we introduce the compact-open topology. Let $X,Y$ be topological spaces. Let $F$ denote a non-empty set of functions from $X$ to $Y$. For each subset $K$ of $X$ and each subset $U$ of $Y$, define $W(K,U)$ to be the set of all members of $F$ which carry $K$ into $U$; that is $W(K,U):=\{f:f[K]\subset U\}$. The family of all sets of the form $W(K,U)$, for $K$ a compact subset of $X$ and $U$ open in $Y$, is a subbase for the compact-open topology for $F$. The family of finite intersections of sets of the form $W(K,U)$ is then a base for the compact open topology. We write co-topology as abbreviation for compact-open topology. For two topological spaces $T$ and $T$, $C(T,T')$ is the set of continuous maps from $T$ to $T'$.  The following generalized Arzela-Ascoli type theorem is based on  \cite{kelley1955general}, Theorem 7.18.
\begin{thrm}
	\label{aa_kelley}
 Let $\mathcal{C}$ be the family of all continuous maps from a $k$-space $X$ which is either Hausdorff or regular to a metric space $(Y,d)$, and let $\mathcal{C}$ have the co-topology. Then a subfamily $\mathcal{F}$ of $\mathcal{C}$ is compact if and only if:
	\begin{itemize}
		\item [(a)] $\mathcal{F}$ is closed in $\mathcal{C}$;
		\item [(b)] the closure of $\mathcal{F}[x]$ in $Y$ is compact for each $x$ in $X$;
		\item [(c)] $\mathcal{F}$ is equicontinuous on every compact subset of $X$.
	\end{itemize}
\end{thrm}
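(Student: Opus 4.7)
The plan is to prove the two directions separately. Necessity follows from continuity of evaluation on points and on compact subsets of $X$, combined with the classical total-boundedness argument on the restriction space $\mathcal{C}(K,Y)$. Sufficiency is obtained by embedding $\mathcal{C}$ into $Y^X$ with the product (pointwise convergence) topology, invoking Tychonoff's theorem, and then checking that on an equicontinuous family the co-topology agrees with the pointwise topology; the $k$-space hypothesis enters only at this last stage, to upgrade ``continuity on every compact subset of $X$'' to continuity on all of $X$.

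For necessity, suppose $\mathcal{F}$ is compact. Since $(Y,d)$ is Hausdorff, so is $\mathcal{C}$ in the co-topology (the subbasic sets $W(\{x\},U)$ already separate points), so (a) follows from the fact that compact subsets of Hausdorff spaces are closed. For (b), the point evaluation $e_x:\mathcal{C}\to Y$ is continuous since $e_x^{-1}(U)=W(\{x\},U)$ is subbasic open; hence $\mathcal{F}[x]=e_x(\mathcal{F})$ is compact in $Y$. For (c), fix a compact $K\subset X$; the restriction map $r_K:\mathcal{C}(X,Y)\to\mathcal{C}(K,Y)$ is continuous, and on $\mathcal{C}(K,Y)$ the co-topology coincides with the topology of the sup-metric $d_\infty(f,g)=\sup_{x\in K}d(f(x),g(x))$. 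Hence $r_K(\mathcal{F})$ is a compact, and thus totally bounded, subset of a metric space, and the standard $\varepsilon/3$-argument---choose a finite $\varepsilon/3$-net $\{f_1,\ldots,f_n\}$, apply Lemma \ref{uniform_equicontinuity} to the finite equicontinuous family $\{f_1,\ldots,f_n\}$ on $K$, and use the triangle inequality---produces a common modulus of continuity on $K$.

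For sufficiency, assume (a)--(c) and embed $\mathcal{C}\hookrightarrow Y^X$ with the product topology. By (b) and Tychonoff, $K_\mathcal{F}:=\prod_{x\in X}\overline{\mathcal{F}[x]}$ is compact; let $\widetilde{\mathcal{F}}$ be the pointwise closure of $\mathcal{F}$ in $K_\mathcal{F}$, which is therefore pointwise-compact. Two key facts are required: (i) every $g\in\widetilde{\mathcal{F}}$ is continuous on $X$, so that $\widetilde{\mathcal{F}}\subset\mathcal{C}$; and (ii) on $\widetilde{\mathcal{F}}$ the pointwise and co-topologies coincide. For (i), the equicontinuity of $\mathcal{F}$ on every compact $K\subset X$ passes to pointwise limits, so each $g|_K$ is continuous; the $k$-space hypothesis then promotes this to continuity of $g$ on $X$. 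For (ii), equicontinuity on compact subsets implies that any pointwise-convergent net converges uniformly on every compact subset, which is precisely convergence in the co-topology. Combining, $\widetilde{\mathcal{F}}$ is co-topology-compact; any $g\in\widetilde{\mathcal{F}}$ is the co-topology limit of a net in $\mathcal{F}$, and by (a) lies in $\mathcal{F}$, so $\mathcal{F}=\widetilde{\mathcal{F}}$ is compact. The main technical obstacle is step (ii) and the careful handling of arbitrary nets in the general topological setting for $X$; this is also where the regular-or-Hausdorff hypothesis enters, in selecting neighborhoods around $x\in K$ on which every element of the net oscillates by less than $\varepsilon$.
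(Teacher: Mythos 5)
The paper does not prove this statement at all: it is imported verbatim as a known generalized Arzel\`a--Ascoli theorem, with a pointer to Kelley's \emph{General Topology}, Theorem 7.18. Your proposal supplies the standard proof of that result, and it is sound in substance: necessity via continuity of point evaluations, continuity of the restriction maps $r_K$, the identification of the co-topology on $C(K,Y)$ with the sup-metric topology, and the $\varepsilon/3$ total-boundedness argument; sufficiency via the embedding into $\prod_{x}\overline{\mathcal{F}[x]}$, Tychonoff, stability of equicontinuity-on-compacts under pointwise limits, the $k$-space upgrade from continuity on compacta to continuity, and the coincidence of the pointwise and compact-open topologies on an equicontinuous family. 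This is essentially Kelley's own argument, so there is nothing methodologically divergent to compare. Two small points deserve correction. First, your appeal to Lemma \ref{uniform_equicontinuity} in the necessity step is misplaced: that lemma concerns a compact \emph{metric} domain, whereas here $K$ is only a compact subset of the topological space $X$; fortunately the finite family $\{f_1,\dots,f_n\}$ is equicontinuous at each point of $K$ simply by intersecting finitely many neighbourhoods, so the $\varepsilon/3$ argument needs no such lemma. Second, your localization of the Hausdorff-or-regular hypothesis is off: it is not needed to choose the neighbourhoods controlling oscillation (equicontinuity hands you those), but rather in the $k$-space step, because the definition of $k$-space used in the paper refers to \emph{closed} compact sets, and one needs either that compact subsets are closed (Hausdorff case) or that closures of compact sets remain compact (regular case) to deduce global continuity of the limit functions from their continuity on compacta.
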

\begin{thrm}
	\label{aa_b_y}
	[Bargley and Young \cite{BargleyYang}, Theorem 4] Let $X$ be a Hausdorff $k$-space and $Y$ a Hausdorff uniform space. Let $\mathcal{F}\subset C(X,Y)$. Then $\mathcal{F}$ is compact in the co-topology if and only if
	\begin{itemize}
		\item [(a)] $\mathcal{F}$ is closed;
		\item [(b)] $\mathcal{F}[x]$ has compact closure for each $x\in X$;
		\item [(c)] $\mathcal{F}$ is equicontinuous.
	\end{itemize}
\end{thrm}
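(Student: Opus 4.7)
The plan is to prove the two implications separately, following the template of the classical Arzela-Ascoli theorem but adapted to a target in a Hausdorff uniform space. Throughout I will use that the co-topology on $C(X,Y)$ coincides with the topology of uniform convergence on compact subsets of $X$ with respect to the given uniformity of $Y$, which is the standard identification when $Y$ is uniform.

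For the necessity direction, suppose $\mathcal{F}$ is compact in the co-topology. Since $Y$ is Hausdorff, the co-topology on $C(X,Y)$ is Hausdorff, so (a) follows because compact subsets of Hausdorff spaces are closed. For (b), the evaluation map $e_x\colon C(X,Y)\to Y$, $f\mapsto f(x)$, is continuous in the co-topology for each $x\in X$ (the singleton $\{x\}$ is compact), hence $\mathcal{F}[x] = e_x(\mathcal{F})$ is compact in $Y$, so has compact closure. For (c), fix $x\in X$ and an entourage $V$ of $Y$; choose a symmetric entourage $W$ with $W\circ W\circ W\subset V$, and use compactness of $\mathcal{F}$ in the topology of uniform convergence on the compact set $\{x\}$ to pick finitely many $f_1,\ldots,f_n\in\mathcal{F}$ whose $W$-neighborhoods at $x$ cover $\mathcal{F}$. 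Continuity of each $f_i$ at $x$ provides a neighborhood $U$ of $x$ with $(f_i(x),f_i(x'))\in W$ for every $x'\in U$ and all $i$. A standard triangle-type argument then yields $(f(x),f(x'))\in V$ for every $f\in\mathcal{F}$ and $x'\in U$, proving equicontinuity at $x$.

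For sufficiency, assume (a)--(c). Form the product $P:=\prod_{x\in X}\overline{\mathcal{F}[x]}$, which is compact by Tychonoff's theorem and (b). The evaluation embedding $\iota\colon\mathcal{F}\hookrightarrow P$, $f\mapsto(f(x))_{x\in X}$, carries the pointwise-convergence topology from $P$. The key claims are: (i) the pointwise closure of $\iota(\mathcal{F})$ in $P$ consists of continuous functions, so lies inside $C(X,Y)$; and (ii) on this equicontinuous family the pointwise topology and the co-topology coincide. Claim (i) holds because equicontinuity passes to pointwise limits when one uses closed entourages of $Y$. For (ii), on a compact $K\subset X$ equicontinuity of the closure together with a finite subcover of $K$ reduces uniform closeness on $K$ to pointwise closeness at finitely many centers, via the same symmetric-entourage argument as above. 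Consequently the pointwise closure of $\iota(\mathcal{F})$ is also compact in the co-topology, and since $\mathcal{F}$ is co-closed by (a) one obtains that $\mathcal{F}$ equals this closure, hence is compact.

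I expect the main obstacle to be a rigorous formalization of the coincidence of pointwise and co-topologies on an equicontinuous family when $Y$ is only a Hausdorff uniform space rather than a metric space, as in Theorem \ref{aa_kelley}. The argument requires repeated, careful selection of symmetric entourages satisfying containments such as $W\circ W\circ W\subset V$, and verification that the defining equicontinuity inequalities survive pointwise limits by working with closed entourages (so that pointwise limit points inherit membership). No fundamentally new ideas beyond Kelley's treatment for metric targets are needed, but the entourage bookkeeping is more delicate than in the metric case.
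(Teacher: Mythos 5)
This theorem is quoted in the paper from Bagley and Young (their Theorem 4) and is not proved there, so there is no internal proof to compare against; I can only assess your argument on its own terms. Your sufficiency direction is sound and is the standard Ascoli argument: embed $\mathcal{F}$ into the Tychonoff-compact product $\prod_{x\in X}\overline{\mathcal{F}[x]}$, check that the pointwise closure of an equicontinuous family is equicontinuous (hence consists of continuous functions, using closed entourages), and verify that on an equicontinuous family the pointwise and compact-open topologies coincide, so that the co-closed set $\mathcal{F}$ coincides with its pointwise closure and is compact. Note that this half does not even need the $k$-space hypothesis, because \emph{global} equicontinuity already makes pointwise limits continuous on all of $X$.

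The necessity of (c), however, contains a genuine gap. You cover $\mathcal{F}$ by finitely many sets of the form $\{f:(f(x),f_i(x))\in W\}$, i.e.\ you only know that each $f$ is $W$-close to some $f_i$ \emph{at the single point} $x$. In the concluding triangle estimate you can chain $(f(x),f_i(x))\in W$ with $(f_i(x),f_i(x'))\in W$, but the third link $(f_i(x'),f(x'))\in W$ is simply not available: nothing controls the distance between $f$ and $f_i$ at the nearby point $x'$. The classical three-entourage argument requires the approximating functions to be uniformly close to $f$ on a set containing both $x$ and a whole neighbourhood of $x$, which would need local compactness of $X$ and is not available for a general $k$-space. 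Tellingly, your necessity argument never invokes the $k$-space hypothesis, yet this implication is precisely where it is essential (without it, co-compact families need not be equicontinuous at points of $X$; the whole point of Bagley--Young is that the $k$-space property rescues this). The standard repair is indirect: since $\mathcal{F}$ is compact Hausdorff and $X$ is a Hausdorff $k$-space, $\mathcal{F}\times X$ is a $k$-space; evaluation is jointly continuous on $\mathcal{F}\times K$ for every compact $K\subset X$, hence on all of $\mathcal{F}\times X$; a compact family on which evaluation is jointly continuous is evenly continuous; and an evenly continuous family whose pointwise images have compact closure is equicontinuous. Your proof needs this (or an equivalent) chain of lemmas to be complete.
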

which is a generalization of Theorem 8.2.10 in \cite{engelking1977general}. This yields the conclusion that for a closed family of continuous functions $\mathcal{F}$ such that $\mathcal{F}[x]$ is precompact for every $x$, equicontinuity on compact sets is equivalent to continuity.\\ 
Moreover, Theorem \ref{aa_b_y} can be rephrased for a family $\CF$ that is relatively compact in $\mathcal{C}$, meaning that its (compact-open) closure is compact:

\begin{thrm}\label{thrm:char rel compactness}
	Let $X$ be a Hausdorff $k$-space and $Y$ a metric space. Let $\mathcal{C}= C(X,Y)$, equipped with the co-topology. A subset $\CF$ of $\mathcal{C}$ is relatively compact iff:
	\begin{enumerate}
		\item[(a)] The closure of $\CF[x]:=\{f(x): f\in\CF\}$ in $Y$ is compact for every $x\in X$.
		\item[(b)] $\CF$ is equicontinuous on every compact subset of $X$.
	\end{enumerate}
	Statement (b) can be replaced by
	\begin{enumerate}
		\item[(b')] $\CF$ is equicontinuous on $X$.
	\end{enumerate}
\end{thrm}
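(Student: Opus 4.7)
The plan is to deduce the result from Theorem \ref{aa_b_y} (equivalently Theorem \ref{aa_kelley}) by passing to the co-topology closure $\overline{\CF}$ in $\mathcal{C}$, since $\CF$ is relatively compact if and only if $\overline{\CF}$ is compact. For the necessity direction I would assume $\overline{\CF}$ is compact and invoke Theorem \ref{aa_b_y}: $\overline{\CF}[x]$ has compact closure for every $x \in X$, and $\overline{\CF}$ is equicontinuous on $X$. The inclusion $\CF \subset \overline{\CF}$ then forces $\CF[x]$ to have compact closure (closed subsets of compact sets are compact), giving (a); equicontinuity trivially restricts to subfamilies, giving (b'), and intersecting neighborhoods with compact sets gives (b).

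For sufficiency, assuming (a) and (b') for $\CF$, I would verify the three conditions of Theorem \ref{aa_b_y} for $\overline{\CF}$. Closedness is automatic. For each $x \in X$ the point-evaluation $e_x:\mathcal{C}\to Y$, $f\mapsto f(x)$, is continuous in the co-topology (because $\{x\}$ is compact and $e_x^{-1}(V)=W(\{x\},V)$ is subbasic open), hence
\[
\overline{\CF}[x] \;=\; e_x(\overline{\CF}) \;\subset\; \overline{e_x(\CF)} \;=\; \overline{\CF[x]},
\]
which is compact by (a). For equicontinuity of $\overline{\CF}$ at $x_0 \in X$, fix $\eps > 0$ and use (b') to pick an open neighborhood $U \ni x_0$ with $d_Y(f(x),f(x_0))<\eps/2$ for all $x\in U$ and $f\in\CF$. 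Given $g \in \overline{\CF}$, choose a net $(f_\alpha)\subset\CF$ with $f_\alpha\to g$ in the co-topology; continuity of the evaluations gives $f_\alpha(x)\to g(x)$ and $f_\alpha(x_0)\to g(x_0)$, and continuity of $d_Y$ yields
\[
d_Y(g(x),g(x_0))\;=\;\lim_\alpha d_Y(f_\alpha(x),f_\alpha(x_0))\;\leq\;\eps/2\;<\;\eps
\]
for every $x\in U$. Theorem \ref{aa_b_y} then produces compactness of $\overline{\CF}$, that is, relative compactness of $\CF$.

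For the equivalence of (b) and (b'), the implication (b') $\Rightarrow$ (b) is immediate. For the converse, I would re-run the sufficiency argument with Theorem \ref{aa_kelley} in place of Theorem \ref{aa_b_y}, since its condition (c) demands only equicontinuity on compact subsets; the same net argument, restricted to a given compact $K$ (so that co-convergence becomes uniform convergence on $K$), transfers equicontinuity on $K$ from $\CF$ to $\overline{\CF}$. Compactness of $\overline{\CF}$ follows from (a) and (b), and the already-proved necessity direction then yields (b'). The only delicate point of the proof is this passage of equicontinuity to the closure; it rests on the continuity of point-evaluation in the co-topology, which is precisely the feature that makes $k$-spaces the natural setting for Arzela--Ascoli type statements.
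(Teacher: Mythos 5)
Your proposal is correct and follows essentially the same route as the paper: pass to the co-topology closure $\overline{\CF}$, use continuity of point evaluation to transfer the pointwise compactness condition, use a net argument to transfer equicontinuity to $\overline{\CF}$, and then invoke the Ascoli-type characterizations (Theorems \ref{aa_kelley}/\ref{aa_b_y}). The only cosmetic difference is that your direct estimate $d_Y(g(x),g(x_0))=\lim_\alpha d_Y(f_\alpha(x),f_\alpha(x_0))\leq \eps/2$ makes the appeal to uniform convergence on compacta unnecessary, which the paper also implicitly exploits.
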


\begin{proof}
	Let $\overline{\CF}$ be the closure of $\CF$ in $\mathcal{C}$. Assume it is compact, then according to Theorem \ref{aa_kelley}, the closure of $\overline{\CF}[x]$ in $Y$ is compact for every $x\in X$. Hence the closure of $\CF[x]$, which is contained in the closure of $\overline{\CF}[x]$, will be compact too. The family $\CF$ is equicontinuous on $X$ for every compact subset of $X$, because it is a subset of $\overline{\CF}$ that has this property.\\
	On the other hand, if $\CF$ satisfies (a) and (b), or (b'), then $\overline{\CF}$ obviously satisfies condition (a) in Theorem \ref{aa_kelley}. Now let $f\in\overline{\CF}$. Then there exists a net $(f_\nu)\subset \CF$ such that $f_\nu\to f$. Point evaluation at $x$ is continuous for the co-topology, so $f_\nu(x)\to f(x)$ in $Y$. Since $f_\nu(x)$ is contained in a compact set in $Y$ for every $\nu$, $f(x)$ will be contained in this compact set too. So (b) holds in Theorem \ref{aa_kelley} for $\overline{\CF}$. In a similar way one can show (c) in Theorem \ref{aa_kelley}. Let $K\subset X$ be compact. The co-topology on $C(X,Y)$ is identical to the topology of uniform convergence on compact subsets (cf. \cite{kelley1955general}, Theorem 7.11). So if $f_*\in \overline{\CF}$ and $(f_\nu)\subset \CF$ is a net such that $f_\nu\to f_*$, then $f_\nu|_K\to f_*|_K$ uniformly. If $x_0\in K$, then for every $\eps>0$ there exists an open neighbourhood $U$ of $x_0$ in $K$ such that
	\[
	d_Y(f(x),f(x_0))<\smfrac{1}{2}\eps\qquad \mbox{for all}\ f\in\CF,\ x\in U.
	\]
	Consequently, 
	\[
	d_Y(f_*(x),f_*(x_0)) = \lim_\nu d_Y(f_\nu(x),f_\nu(x_0)) \leq \smfrac{1}{2}\eps <\eps
	\]
	for all $x\in U$. So $\overline{\CF}$ is equicontinuous on $K$ too. Theorem \ref{aa_kelley} then yields the compactness of $\overline{\CF}$ in $\mathcal{C}$, hence the relative compactness of $\CF$.
\end{proof}
In \cite{Worm} and in \cite{Schur_like} we can find the following result, which will be crucial in proving norm convergence of the Lie-Trotter product formula. 
\begin{thrm} 
	\label{weak_implies_strong}
	Let $S$ be complete and separable. Let $(\mu_n)_{n\in\NN}\subset\MM_s(S)$ and $N\geq 0$ be such that $\langle \mu_n,f\rangle$ converges as $n\to\infty$ for every $f\in \BL(S)\simeq \MM(S)^*_{\BL}$ and \[\|\mu_n\|_{TV}\leq N\quad for\,\, every\,\, n\in\NN.\]
	Then there exists $\mu\in\MM(S)$ such that $\|\mu_n-\mu\|_{\BL}^*\to 0$ as $n\to\infty$.
\end{thrm}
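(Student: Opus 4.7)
The plan is to establish first that the sequence $(\mu_n)$ is uniformly tight, then extract a weak limit $\mu \in \MM(S)$ via Prokhorov's theorem, and finally upgrade weak convergence in duality with $\BL(S,d)$ to convergence in the dual bounded-Lipschitz norm by an Arzel\`a--Ascoli argument. The main obstacle is the tightness step: each individual $|\mu_n|$ is tight by Ulam's theorem (since $S$ is Polish), but nothing in the hypothesis forces this tightness to be \emph{uniform} in $n$.

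For tightness I would argue by contradiction using a gliding hump. If $(|\mu_n|)$ were not uniformly tight, one could inductively produce $\eps>0$, a subsequence $(\mu_{n_k})$, and an increasing chain of compacts $K_0=\emptyset\subset K_1\subset K_2\subset\cdots$ with $|\mu_{n_k}|(K_k\setminus K_{k-1})>3\eps/4$. By the Hahn decomposition of each $\mu_{n_k}$, passing to a further subsequence, I may assume the positive part satisfies $\mu_{n_k}^+(A_k)>3\eps/8$ on pairwise disjoint Borel sets $A_k\subset K_k\setminus K_{k-1}$. Using outer regularity I enclose each $A_k$ in a tight open neighborhood and erect a Lipschitz bump $\varphi_k(x)=\max\bigl(0,1-d(x,A_k)/r_k\bigr)$ with radius $r_k$ chosen — using compactness of the earlier $K_j$ — small enough that the supports of the $\varphi_k$ are essentially disjoint. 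A sign combination $f=\sum_k\sigma_k\varphi_k$ with $\sigma_k\in\{\pm1\}$ selected to force oscillation then lies in $\BL(S,d)$ and makes $\langle\mu_{n_k},f\rangle$ oscillate by at least $\eps/4$, contradicting the assumed convergence of $\langle\mu_n,f\rangle$. Executing this construction cleanly on a general Polish space (without local compactness) is the delicate point.

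Once uniform tightness is in hand, $(\mu_n^+)$ and $(\mu_n^-)$ are tight and TV-bounded, so by Prokhorov's theorem some subsequence converges in $\sigma(\MM(S),C_b(S))$ to a signed measure $\mu\in\MM(S)$. Since $\BL(S,d)$ separates finite signed Borel measures on a Polish space and $\langle\mu_n,f\rangle$ converges by hypothesis for every $f\in\BL(S,d)$, the limit $\langle\mu,f\rangle=\lim_n\langle\mu_n,f\rangle$ does not depend on which subsequence is extracted, so the entire sequence $(\mu_n)$ converges to $\mu$ in duality with $\BL(S,d)$.

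For the final upgrade to norm convergence I use that the unit ball $B=\{f\in\BL(S,d):\|f\|_{\BL,d}\leq 1\}$ is a uniformly bounded and equicontinuous family on $(S,d)$. Given $\eps>0$, tightness of $(\mu_n)$ together with tightness of $\{\mu\}$ yields a compact $K\subset S$ with $|\mu_n-\mu|(S\setminus K)<\eps$ for every $n$, and by Arzel\`a--Ascoli the restriction $B|_K$ is totally bounded in $C(K)$, hence covered by finitely many $\eps$-balls centred at $f_1,\ldots,f_M\in B$. Splitting
\[
|\langle\mu_n-\mu,f\rangle|\leq|\langle\mu_n-\mu,f-f_j\rangle|+|\langle\mu_n-\mu,f_j\rangle|,
\]
the first term is bounded by $\eps\,|\mu_n-\mu|(K)+2\,|\mu_n-\mu|(S\setminus K)\leq(2N+2)\eps$ uniformly in $n$ and $f\in B$, and the second term tends to zero as $n\to\infty$ for each of the finitely many $f_j$. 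Hence $\sup_{f\in B}|\langle\mu_n-\mu,f\rangle|\to 0$, which is precisely $\|\mu_n-\mu\|_{\BL,d}^*\to 0$.
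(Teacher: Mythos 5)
First, a point of reference: the paper does not prove Theorem \ref{weak_implies_strong} itself but imports it from \cite{Worm} and \cite{Schur_like}, so your proposal has to be judged against the argument in those references. Your architecture --- uniform tightness of the variations, then Prokhorov, then an Arzel\`a--Ascoli upgrade from weak to $\|\cdot\|_{\BL}^*$-convergence --- collapses at the very first step, and the point you flag as ``delicate'' is in fact insurmountable: under the stated hypotheses the total variations $|\mu_n|$ need \emph{not} be uniformly tight. Take $S=\RR$ and $\mu_n=\delta_{n}-\delta_{n+1/n}$. Then $\|\mu_n\|_{TV}=2$ and $|\langle\mu_n,f\rangle|\le|f|_{L,d}/n\to0$ for every $f\in\BL(S,d)$, so all hypotheses hold and the conclusion holds with $\mu=0$ (indeed $\|\mu_n\|_{\BL,d}^*\le 1/n$); yet $|\mu_n|(S\setminus K)=2$ for all large $n$, for every compact $K$. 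Hence no correct argument can deliver your tightness claim, and your gliding hump must fail. It fails exactly where you suspect: to make $\langle\mu_{n_k},\varphi_k\rangle$ register the positive mass on $A_k$ without also registering the cancelling negative mass sitting at distance $o(1)$ from $A_k$, you are forced to take $r_k\to0$, whence $|\varphi_k|_{L,d}=1/r_k\to\infty$ and $f=\sum_k\sigma_k\varphi_k\notin\BL(S,d)$; if instead the $r_k$ stay bounded below, each bump integrates the dipole to something tending to $0$ and no oscillation is produced.

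The later steps inherit the damage: Prokhorov cannot be applied to $(\mu_n^{\pm})$ without their uniform tightness, and the final estimate $|\mu_n-\mu|(S\setminus K)<\eps$ uniformly in $n$ is the same false claim again. (Your closing Arzel\`a--Ascoli argument is standard and correct \emph{given} uniform tightness of the variations; the entire content of the theorem is precisely to handle sequences, like the dipole example, whose variations are large and escape every compact set while the signed measures themselves are $\|\cdot\|_{\BL}^*$-small.) This is why \cite{Schur_like} does not argue via tightness of $|\mu_n|$ but establishes a Schur-type property that confronts the near-cancellation of $\mu_n^+$ and $\mu_n^-$ directly. To salvage your route you would need first to split off from each $\mu_n$ a piece that is small in $\|\cdot\|_{\BL}^*$ (though possibly of large total variation) so that the remainder is uniformly tight --- and proving that such a splitting exists is essentially equivalent to the theorem itself.
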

\subsection{Tight Markov operators} 
Let us now introduce the concept of tightness of sets of measures and families of Markov operators. \\
According to  \cite{bogachev_2}, Theorem 7.1, all Borel measures on a Polish space are Radon i.e.  locally finite and inner regular. Also, by Definition 8.6.1 in \cite{bogachev_2} we say that a family of Radon measures $\mathcal{M}$ on a topological space $S$ is called \textit{uniformly tight} if for every $\eps>0$, there exists a compact set $K_\eps$ such that $|\mu|(S\setminus K_\eps)<\eps$ for all $\mu\in\mathcal{M}$. Moreover, we say that a family $(P_\lambda)_{\lambda\in\Lambda}$ of Markov operators is \textit{tight} if for each $\mu\in\MM^+(S)_{\BL}$, $\{P_\lambda\mu:\lambda\in\Lambda\}$ is \textit{uniformly tight}.
The following theorem, which is a rephrased version of  Theorem 8.6.2 in \cite{bogachev_2}, due to Prokhorov shows that 
in our case tightness of the $\|\cdot\|_{TV}$-uniformly bounded family is equivalent to precompactness of $\{P_\lambda\mu\,|\,\lambda\in\Lambda\}$ in $\MM^+(S)_{\BL}$. 
\begin{thrm}[Prokhorov theorem]
 Let $S$ be a complete separable metric space and let $M$ be a family of finite Borel measures on $S$. The following conditions are equivalent:
	\begin{itemize}
		\item [(i)] Every sequence $\{\mu_n\}\subset M$ contains a weakly convergent subsequence.
		\item [(ii)] The family $M$ is uniformly tight and uniformly bounded in total variation norm.
	\end{itemize}
\end{thrm}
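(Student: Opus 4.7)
The plan is to establish the two implications separately; (ii) $\Rightarrow$ (i) is the substantive Prokhorov direction relying on compactness plus diagonal extraction, while (i) $\Rightarrow$ (ii) is a converse argument by contradiction that exploits the Polish structure of $S$.

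For (ii) $\Rightarrow$ (i), my approach is the classical nested-compact construction followed by diagonal extraction. First, uniform tightness provides an increasing sequence of compact sets $K_1 \subset K_2 \subset \cdots$ in $S$ with $|\mu|(S \setminus K_n) < 1/n$ for every $\mu \in M$, and the uniform total-variation bound $N$ yields $\mu(K_n) \leq N$ for all $n$. On each compact metric space $K_n$, the restrictions $\mu_k|_{K_n}$ form a family of positive Borel measures bounded by $N$ in total mass; since $C(K_n)$ is separable, the corresponding ball in $C(K_n)^*$ is weak-$*$ sequentially compact. A standard diagonal argument then yields a single subsequence $(\mu_{k_j})_j$ and a consistent family $(\nu_n)_n$ of Borel measures on $K_n$ with $\mu_{k_j}|_{K_n} \to \nu_n$ weakly on $K_n$ for each $n$; compatibility $\nu_{n+1}|_{K_n} = \nu_n$ follows from uniqueness of weak limits. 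These glue into a single finite Borel measure $\mu$ on $S$ via $\mu(A) := \lim_n \nu_n(A \cap K_n)$, with $\mu(S) \leq N$. To verify $\mu_{k_j} \to \mu$ weakly on $S$, fix $f \in C_b(S)$ and $\eps > 0$, choose $n$ so that $2 N \|f\|_\infty / n < \eps/2$, and split $\langle \mu_{k_j} - \mu, f\rangle$ into a contribution over $K_n$ (vanishing for $j$ large by weak convergence on $K_n$) and a tail bounded by the uniform tightness estimate on $S \setminus K_n$.

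For (i) $\Rightarrow$ (ii), uniform boundedness in total variation is immediate from testing weak convergence against $\mathbf{1}_S \in C_b(S)$: any sequence in $M$ whose masses diverge cannot admit a weakly convergent subsequence. For uniform tightness I argue by contradiction: assume there exists $\eps > 0$ such that for every compact $K \subset S$ some $\mu_K \in M$ satisfies $\mu_K(S \setminus K) \geq \eps$. Using the Polish structure, build an increasing sequence of compacts $K_n$ with the property that any prescribed compact is eventually contained in some $K_n$, and select $\mu_n \in M$ witnessing the failure on $K_n$. By (i), extract a weakly convergent subsequence $\mu_{n_j} \to \mu$; inner regularity of the Radon measure $\mu$ on Polish $S$ yields a compact $L$ with $\mu(L) > \mu(S) - \eps/4$, and for a continuous cutoff $f \in C_b(S)$ equal to $1$ on $L$ and supported in a small neighbourhood, weak convergence gives $\int f \, d\mu_{n_j} \to \int f \, d\mu \geq \mu(S) - \eps/4$. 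On the other hand, for $j$ large enough the support of $f$ is contained in $K_{n_j}$, so $\int f \, d\mu_{n_j} \leq \mu_{n_j}(K_{n_j}) \leq \mu_{n_j}(S) - \eps$; combined with $\mu_{n_j}(S) \to \mu(S)$ this produces the contradiction.

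The main obstacle is the gluing step in (ii) $\Rightarrow$ (i): the compatible family $(\nu_n)$ on the nested compacts must be upgraded to a genuine finite Borel measure on all of $S$, and global weak convergence on $C_b(S)$ must be controlled uniformly in $j$ using only the local convergence on each $K_n$. The Polish hypothesis is essential throughout: separability of $C(K_n)$ delivers the metrisability and sequential compactness used in the diagonal extraction, while completeness of $S$ together with separability underlies Ulam's inner regularity theorem exploited in the converse direction.
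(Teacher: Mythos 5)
The paper itself offers no proof of this theorem: it is quoted, as the surrounding text says, from Bogachev (Theorem 8.6.2), so there is no internal argument to compare against and your proposal must be judged on its own terms.

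Your direction (ii) $\Rightarrow$ (i) is the standard nested-compacts-plus-diagonal argument and is sound in outline, but one step is misstated: the exact compatibility $\nu_{n+1}|_{K_n}=\nu_n$ does \emph{not} follow from ``uniqueness of weak limits''. Weak-$*$ convergence of $\mu_{k_j}$ against $C(K_{n+1})$ does not entail weak-$*$ convergence of the restrictions $\mu_{k_j}|_{K_n}$ against $C(K_n)$: a test function $g$ on $K_n$, extended by Tietze to $K_{n+1}$, picks up mass from $K_{n+1}\setminus K_n$. What is true, and what suffices, is the estimate $|\nu_{n+1}(g)-\nu_n(g)|\leq 2\|g\|_\infty/n$ coming from the tightness bound, so the $\nu_n$ are only asymptotically compatible; the gluing and the final splitting argument still go through with this weaker statement. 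This is a repairable slip.

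The direction (i) $\Rightarrow$ (ii) contains a genuine gap. You propose to ``build an increasing sequence of compacts $K_n$ with the property that any prescribed compact is eventually contained in some $K_n$''. No such sequence exists in a general Polish space: this property (hemicompactness) forces local compactness in a first-countable space, and the spaces relevant here (e.g.\ infinite-dimensional state spaces, or $\MM^+(S)_{\BL}$ itself) are typically not locally compact. Even granting such a sequence, your cutoff $f$ is supported in a \emph{neighbourhood} of the compact $L$, and in a non-locally-compact space such a neighbourhood need not be compact, so the claim that $\supp f\subset K_{n_j}$ for large $j$ fails as well. The correct converse runs differently: show that for every $\eps>0$ and $r>0$ there are finitely many open $r$-balls $B_1,\dots,B_m$ centred in a countable dense set with $\mu\bigl(\bigcup_{i\leq m}B_i\bigr)>\mu(S)-\eps$ for all $\mu\in M$; otherwise pick $\mu_m$ violating this for $A_m=\bigcup_{i\leq m}B_i$, pass to a weak limit $\mu$ of a subsequence, and use the portmanteau inequality for the open sets $A_m$ together with $A_m\uparrow S$ to obtain $\mu(S)\leq\mu(S)-\eps$. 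Applying this with $r=1/k$ and tolerance $\eps 2^{-k}$ and intersecting the closures of the resulting finite unions yields a closed, totally bounded, hence (by completeness) compact set $K$ with $\mu(S\setminus K)\leq\eps$ for all $\mu\in M$. Completeness of $S$ enters precisely at this point, not through inner regularity of the limit measure.
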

\section{Equicontinuous families of Markov operators}
\label{section composition}
Let $S$ be a Polish space and consider a semigroup $(P_t)_{t\geq 0}$ of Markov operators. We will examine properties of equicontinuous families of Markov operators. An equicontinuous family of Markov operators must consist of $\|\cdot\|_{\BL,d}^*$-continuous operators. These are Feller (cf. \cite{Worm}, Lemma 7.2.1).
Due to Theorem \ref{aa_kelley}, a closed subset $F$ of the mappings from $\MM^+(S)_{\BL}$ to $\MM^+(S)_{\BL}$ with the co-topology is compact if and only if $F|_K$ is equicontinuous for each compact $K\subset \MM^+(S)$ and the set $\{P_t\mu:P_t\in F\}\subset \MM^+(S)$ has a compact closure for every $\mu\in\MM^+(S)$. A continuous function on a compact metric space is uniformly continuous.  A similar statement holds for equicontinuous families.
\begin{lemma}\label{L1} Let $(P_\lambda)_{\lambda\in\Lambda}$ be a family of Markov operators on $S$. If $(P_\lambda)_{\lambda\in \Lambda}$ is an equicontinuous family on the compact set $K\subset\MM^+(S)$, then $(P_\lambda)_{\lambda\in\Lambda}$ is uniformly equicontinuous on $K$.
\end{lemma}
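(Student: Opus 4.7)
The plan is to derive this lemma as a direct corollary of Lemma \ref{uniform_equicontinuity}. That lemma asserts, in an abstract metric setting, that an equicontinuous family of maps from a compact metric space into a metric space is automatically uniformly equicontinuous; so the main task is to verify that the present setting fits into that abstract framework, and then to simply invoke the result.

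First I would observe that the set $\MM^+(S)$ equipped with the restriction of the dual bounded Lipschitz norm $\|\cdot\|_{\BL,d}^{*}$ is a metric space, and that the compactness of $K \subset \MM^+(S)$ assumed in the hypothesis refers precisely to this metric topology. Restricting each $P_\lambda$ to $K$ yields a map $P_\lambda|_K : K \to \MM^+(S)$ between metric spaces. The hypothesis of equicontinuity of $(P_\lambda)_{\lambda\in\Lambda}$ on $K$, in the sense introduced in Section \ref{section preliminaries}, implies in particular that each individual $P_\lambda|_K$ is continuous at every $\mu \in K$. Hence $\{P_\lambda|_K : \lambda \in \Lambda\}$ sits inside $\mathcal{C}(K,\MM^+(S))$, which is the setting required by Lemma \ref{uniform_equicontinuity}.

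Once these identifications are in place, the conclusion is immediate: applying Lemma \ref{uniform_equicontinuity} with $(K, \|\cdot\|_{\BL,d}^{*})$ as the compact metric space and $(\MM^+(S), \|\cdot\|_{\BL,d}^{*})$ as the target metric space yields that, for every $\eps > 0$, there exists a single $\delta > 0$ such that
\[
\|P_\lambda \mu - P_\lambda \nu\|_{\BL,d}^{*} < \eps \quad \text{whenever } \mu, \nu \in K \text{ satisfy } \|\mu - \nu\|_{\BL,d}^{*} < \delta,
\]
uniformly in $\lambda \in \Lambda$, which is the definition of uniform equicontinuity on $K$.

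I do not anticipate any genuine obstacle here; the statement is essentially a transfer of a classical fact from $\mathcal{C}(K,Y)$ to the specific family of Markov operators restricted to $K$. The only mild point that deserves a brief remark in the written proof is that the notion of equicontinuity employed in the definition of equicontinuous families of Markov operators (pointwise equicontinuity with $\delta$ measured in $\|\cdot\|_{\BL,d}^{*}$) coincides with the standard metric-space notion used in Lemma \ref{uniform_equicontinuity}, which is immediate since both source and target carry the same metric structure.
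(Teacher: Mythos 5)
Your proposal is correct and matches the paper's intent: Lemma \ref{L1} is stated there without a separate proof precisely because it is the specialization of Lemma \ref{uniform_equicontinuity} to the compact metric space $(K,\|\cdot\|_{\BL,d}^{*})$ with target $(\MM^+(S),\|\cdot\|_{\BL,d}^{*})$, exactly as you argue. The identifications you check (that $K$ with the restricted dual bounded Lipschitz norm is a compact metric space and that the two notions of equicontinuity coincide) are the only points that needed verification, and you handle them correctly.
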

The following result, found in \cite{Schur_like} and based on \cite{Worm}, Theorem 7.2.2, gives equivalent conditions for a family of regular Markov operators to be equicontinuous:
\begin{thrm}
	\label{Thrm_equivalence}
Let $(P_\lambda)_{\lambda\in\Lambda}$ be a family of regular family of Markov operators on the complete separable metric space $(S,d)$. Let $U_\lambda$ be the dual operator of $P_\lambda$. Then the following statements are equivalent:
\begin{itemize}
	\item [(i) ] $(P_\lambda)_{\lambda\in\Lambda}$ is an equicontinuous family;
	\item [(ii) ] $(U_\lambda f)_{\lambda\in\Lambda}$ is an equicontinuous family in $C_b(S)$ for all $f\in \mathrm{\BL}(S,d)$;
	\item [(iii) ] $\{U_\lambda f|f\in B, \lambda\in\Lambda\}$ is an equicontinuous family for every bounded set $B\subset \mathrm{\BL}(S,d)$.
\end{itemize}
\end{thrm}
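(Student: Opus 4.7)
The plan is to establish the cycle $(i)\Rightarrow(iii)\Rightarrow(ii)\Rightarrow(i)$. The implication $(iii)\Rightarrow(ii)$ is immediate upon taking $B=\{f\}$. For $(i)\Rightarrow(iii)$, I would exploit the duality identity $U_\lambda f(x)-U_\lambda f(y)=\langle P_\lambda\delta_x-P_\lambda\delta_y,f\rangle$ together with the elementary bound $\|\delta_x-\delta_y\|_{\BL,d}^*\leq d(x,y)\wedge 2$. Equicontinuity of $(P_\lambda)_{\lambda\in\Lambda}$ at $\delta_x\in\MM^+(S)$ yields, for every $\eps>0$, an open $d$-neighbourhood $V_x$ of $x$ with $\sup_\lambda\|P_\lambda\delta_x-P_\lambda\delta_y\|_{\BL,d}^*<\eps$ for $y\in V_x$; dualising gives $|U_\lambda f(x)-U_\lambda f(y)|\leq\|f\|_{\BL,d}\cdot\|P_\lambda\delta_x-P_\lambda\delta_y\|_{\BL,d}^*\leq M\eps$ uniformly in $\lambda$ and in $f$ with $\|f\|_{\BL,d}\leq M$, which is exactly $(iii)$.

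The implication $(ii)\Rightarrow(i)$ is the main obstacle, because $\|P_\lambda\mu-P_\lambda\nu\|_{\BL,d}^*=\sup_{\|f\|_{\BL,d}\leq 1}|\langle\mu-\nu,U_\lambda f\rangle|$ involves a supremum over $f$, whereas $(ii)$ only guarantees equicontinuity of $(U_\lambda f)_\lambda$ for each \emph{individual} $f$. I would argue by contradiction: suppose $(P_\lambda)$ fails to be equicontinuous at some $\mu_0\in\MM^+(S)$, producing sequences $\nu_n\in\MM^+(S)$ with $\|\mu_0-\nu_n\|_{\BL,d}^*\to 0$ and $\lambda_n\in\Lambda$ with $\|\sigma_n\|_{\BL,d}^*\geq\eps_0$, where $\sigma_n:=P_{\lambda_n}(\nu_n-\mu_0)$. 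Since Markov operators are $\|\cdot\|_{\TV}$-contractive, $(\sigma_n)$ is uniformly bounded in total variation. The plan is to show that for every \emph{fixed} $f\in\BL(S,d)$,
\[
\langle\sigma_n,f\rangle=\langle\nu_n-\mu_0,U_{\lambda_n}f\rangle\;\longrightarrow\;0,
\]
and then invoke the Schur-like property (Theorem \ref{weak_implies_strong}) to bootstrap this pointwise-in-$f$ convergence to $\|\sigma_n\|_{\BL,d}^*\to 0$, contradicting the lower bound $\eps_0$.

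To establish the scalar convergence for a fixed $f$, note that $\|\cdot\|_{\BL,d}^*$-convergence on $\MM^+(S)$ coincides with narrow convergence, so by Prokhorov's theorem $\{\nu_n\}\cup\{\mu_0\}$ is uniformly tight: for each $\eta>0$ there is a compact $K_\eta\subset S$ on whose complement each of these measures places mass less than $\eta$. By $(ii)$ and Lemma \ref{uniform_equicontinuity}, the family $\{U_\lambda f|_{K_\eta}:\lambda\in\Lambda\}$ is uniformly equicontinuous and uniformly bounded on $K_\eta$, hence relatively compact in $C(K_\eta)$ by Arzel\`a-Ascoli. Fixing $\theta>0$, select a finite $\theta$-net $\{g_1,\ldots,g_N\}\subset C(K_\eta)$, extend each $g_i$ to $\tilde g_i\in C_b(S)$ via Tietze preserving $\|\tilde g_i\|_\infty\leq\|f\|_\infty$, and for each $n$ pick $i(n)$ with $\|U_{\lambda_n}f-\tilde g_{i(n)}\|_{C(K_\eta)}<\theta$. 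A triangle-inequality splitting yields
\[
|\langle\nu_n-\mu_0,U_{\lambda_n}f\rangle|\leq 2\eta\|f\|_\infty+\theta(\nu_n(K_\eta)+\mu_0(K_\eta))+\max_{1\leq i\leq N}|\langle\nu_n-\mu_0,\tilde g_i\rangle|+2\eta\|f\|_\infty.
\]
The maximum over the finite net tends to $0$ by narrow convergence; letting first $n\to\infty$ and then $\theta,\eta\to 0$ gives the desired convergence. The crux of the argument is thus the interplay between the per-$f$ equicontinuity supplied by $(ii)$, the uniform tightness extracted from $\|\cdot\|_{\BL,d}^*$-convergence, and the Schur-like bridge from weak to norm convergence on TV-bounded sequences.
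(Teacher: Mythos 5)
Your proposal is correct. Note first that the paper does not actually prove Theorem \ref{Thrm_equivalence}: it imports it from \cite{Schur_like} and \cite{Worm} (Theorem 7.2.2), so there is no internal proof to compare against; I am therefore judging your argument on its own. The easy directions are fine: $(iii)\Rightarrow(ii)$ is trivial, and $(i)\Rightarrow(iii)$ works exactly as you say, since $\|\delta_x-\delta_y\|_{\BL,d}^*=\frac{2d(x,y)}{2+d(x,y)}\leq d(x,y)$ lets you feed $d$-closeness of points into equicontinuity of $(P_\lambda)$ at $\delta_x$ and dualise. For the hard implication $(ii)\Rightarrow(i)$ your contradiction argument is sound and complete: the negation produces $\nu_n\to\mu_0$ in $\|\cdot\|_{\BL,d}^*$ (hence narrowly, hence $\{\nu_n\}\cup\{\mu_0\}$ is uniformly tight by Prokhorov, and $\sup_n\|\nu_n\|_{TV}<\infty$ since norm and total mass agree on $\MM^+(S)$); the key difficulty — that $U_{\lambda_n}f$ varies with $n$ — is correctly neutralised by replacing it, up to errors controlled by $\eta$ and $\theta$, with a member of a finite $\theta$-net that is independent of $n$, against which narrow convergence applies; and the Schur-like Theorem \ref{weak_implies_strong} correctly upgrades $\langle\sigma_n,f\rangle\to0$ for all $f\in\BL(S,d)$ plus the TV-bound to $\|\sigma_n\|_{\BL,d}^*\to0$, contradicting $\|\sigma_n\|_{\BL,d}^*\geq\eps_0$. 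This use of the Schur-like property is entirely in the spirit of how the paper deploys it elsewhere (e.g.\ in Lemma \ref{lemma_Cauchy_seq}). One small simplification: the Tietze extension step is unnecessary — you may choose the $\theta$-net from the family $\{U_\lambda f|_{K_\eta}\}$ itself, i.e.\ $g_i=U_{\lambda_i}f|_{K_\eta}$, and then use $\tilde g_i:=U_{\lambda_i}f\in C_b(S)$ directly, which already satisfies $\|\tilde g_i\|_\infty\leq\|f\|_\infty$.
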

 In the next part of this section we show results which allow us to prove Theorem \ref{Theorem_composition}, that is that the composition of equicontinuous family of Markov operators with equicontinuous and tight family of Markov operators is equicontinuous. Additionally, if both families are tight, the composition is also tight. One can find an example of equicontinuous and tight families of Markov operators in \cite{Szarek_dissertationes}. 
 
 Let us first prove the following crucial observation. 
 \begin{lemma}
 	\label{precompact}
 	Let $(P_\lambda)_{\lambda\in\Lambda}$ be an equicontinuous and tight family of Markov operators on $(S,d)$ and let $K\subset \MM^+(S)_{\BL}$ be precompact. Then $\{P_\lambda\mu\,|\,\mu\in K,\lambda\in\Lambda\}\subset\MM^+(S)_{\BL}$ is precompact.
 \end{lemma}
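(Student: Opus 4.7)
The plan is to show that every sequence $\{P_{\lambda_n}\mu_n\}_{n\in\NN}$ with $\lambda_n\in\Lambda$ and $\mu_n\in K$ admits a $\|\cdot\|_{\BL,d}^*$-convergent subsequence; since $\MM^+(S)_{\BL}$ is metric, this suffices for precompactness of $\{P_\lambda\mu:\mu\in K,\,\lambda\in\Lambda\}$ and I do not need to produce a single covering argument by hand.

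First, I would invoke precompactness of $K$ to extract, along a subsequence (not relabeled), a limit $\mu_n\to\mu_*$ in $\|\cdot\|_{\BL,d}^*$ with $\mu_*\in\overline{K}$. I then apply the standing hypotheses at the single point $\mu_*$: tightness of the family $(P_\lambda)_{\lambda\in\Lambda}$ ensures that $\{P_\lambda\mu_*:\lambda\in\Lambda\}$ is uniformly tight, while the defining Markov identity gives $\|P_\lambda\mu_*\|_{\TV}=\|\mu_*\|_{\TV}$ uniformly in $\lambda$. The Prokhorov theorem from Section \ref{section preliminaries} then supplies a weakly convergent subsequence $P_{\lambda_n}\mu_*\to\nu_*$ in $\MM^+(S)$; because the bounded Lipschitz norm metrizes the weak topology on uniformly tight, total-variation-bounded subsets of $\MM^+(S)$ when $(S,d)$ is Polish (Dudley), this convergence in fact holds in $\|\cdot\|_{\BL,d}^*$, and $\nu_*\in\MM^+(S)$ since the positive cone is $\|\cdot\|_{\BL,d}^*$-closed.

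Finally I close the gap by exploiting equicontinuity on $\overline{K}$. Lemma \ref{L1} promotes equicontinuity of $(P_\lambda)_{\lambda\in\Lambda}$ on the compact set $\overline{K}$ to \emph{uniform} equicontinuity there. Splitting via the triangle inequality,
\[
\|P_{\lambda_n}\mu_n-\nu_*\|_{\BL,d}^* \;\leq\; \|P_{\lambda_n}\mu_n-P_{\lambda_n}\mu_*\|_{\BL,d}^* \;+\; \|P_{\lambda_n}\mu_*-\nu_*\|_{\BL,d}^*,
\]
the second summand vanishes by construction; the first is forced to zero uniformly in $n$ by uniform equicontinuity, since $\|\mu_n-\mu_*\|_{\BL,d}^*\to 0$ and $\mu_n,\mu_*\in\overline{K}$. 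This identifies the desired limit $\nu_*$ and completes the extraction.

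The main subtlety I anticipate is the transition from Prokhorov's weak-topology conclusion to $\|\cdot\|_{\BL,d}^*$-convergence; this rests on the Dudley-type metrization of the weak topology on total-variation-bounded subsets of $\MM^+(S)$, which requires the Polish structure of $(S,d)$ but is classical. Everything else is a clean three-term triangle estimate combined with the two equicontinuity/tightness inputs, applied separately to get convergence in the ``$\mu$ variable'' (precompactness of $K$) and in the ``$\lambda$ variable'' (tightness + TV-invariance of the $P_\lambda$).
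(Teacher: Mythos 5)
Your argument is correct, but it takes a genuinely different route from the paper. The paper's proof is topological: it views $(P_\lambda|_{\overline{K}})$ as a subset of $C(\overline{K},\MM^+(S)_{\BL})$ with the compact-open topology, invokes the Arzel\`a--Ascoli-type characterisation (Theorems \ref{aa_kelley}--\ref{aa_b_y}) to get relative compactness of this family of maps, and then applies joint continuity of the evaluation map $(F,\mu)\mapsto F(\mu)$ to conclude that the image of $\mathrm{Cl}(\{P_\lambda|_{\overline{K}}\})\times\overline{K}$ is compact. You instead run a direct sequential extraction: first a convergent subsequence $\mu_n\to\mu_*$ in $\overline{K}$, then Prokhorov plus the TV-invariance of Markov operators and the Dudley metrization of weak convergence on $\MM^+(S)$ to get $P_{\lambda_n}\mu_*\to\nu_*$ in $\|\cdot\|_{\BL,d}^*$ along a further subsequence, and finally a two-term triangle estimate closed by equicontinuity. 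This works because $\MM^+(S)_{\BL}$ is metric, so relative compactness is sequential, and the last metrization step is exactly the identification the paper itself uses in the proof of Proposition \ref{prop:e-prop for one is e-prop for any} (alternatively you could route it through Theorem \ref{weak_implies_strong}). Two small remarks: you do not actually need Lemma \ref{L1} --- plain equicontinuity of $(P_\lambda)_{\lambda\in\Lambda}$ \emph{at the single point} $\mu_*$, as defined in Section \ref{section_main_theorems}, already controls $\|P_{\lambda_n}\mu_n-P_{\lambda_n}\mu_*\|_{\BL,d}^*$ once $\|\mu_n-\mu_*\|_{\BL,d}^*<\delta$; and your approach is arguably more elementary and self-contained, whereas the paper's fits the compact-open-topology machinery it reuses elsewhere (e.g.\ in Proposition \ref{prop:e-prop for one is e-prop for any}).
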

 \begin{proof}
 	As $K$ is precompact, then $\overline{K}$ is compact in $\MM^+(S)_{\BL}$. So $(P_\lambda|_{\overline{K}})\subset C(\overline{K},\MM^+(S)_{\BL})$ is equicontinuous and for each $\mu\in\bar{K}$, $\{P_\lambda\mu|\lambda\in\Lambda\}$ is precompact, by tightness of the family $(P_\lambda)_{\lambda\in\Lambda}$. 
 	Hence, by Theorems \ref{aa_kelley}-\ref{aa_b_y}, $\{P_\lambda|_{\overline{K}}\}\subset C(\overline{K},\MM^+(S)_{\BL})$ is relatively compact for the compact-open topology, which is the $\|\cdot\|_{\infty}$-norm topology in this case.\\
 	Let us consider the evaluation map
 	\begin{equation}
 	\nonumber
 	\begin{array}{rcl}
 	ev:C(\overline{K},\MM^+(S)_{\BL})\times \overline{K}&\to&\MM^+(S)_{\BL}\\
 	(F,\mu)&\mapsto&F(\mu).
 	\end{array}
 	\end{equation}
 	 Theorem 5, \cite{kelley1955general},  p.223 yields that this map is jointly continuous if $C(\overline{K},\MM^+(S)_{\BL})$ is equipped with the co-topology.
 	So 
 	\[K'=\{F(\mu)\,|\,F\in \mathrm{Cl}(\{P_\lambda|_{\overline{K}}:\lambda\in\Lambda\}), \mu\in\overline{K}\}\] is compact in $\MM^+(S)_{\BL}$. 
 \end{proof}
To prove Theorem \ref{Theorem_composition}, we will need the following result.
\begin{prop}\label{prop:e-prop for one is e-prop for any}
	Let $(P_\lambda)_{\lambda\in\Lambda}$ be a tight family of regular Markov operator on $S$. If $(P_\lambda)_{\lambda\in\Lambda}$ is equicontinuous for one admissible metric on $S$, then it is equicontinuous for any admissible metric.
\end{prop}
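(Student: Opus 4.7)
The strategy is to argue by contradiction, relying on the classical identification (essentially due to Dudley, see \cite{Dudley1966}) that for any admissible metric $d$ on the Polish space $S$ the norm $\|\cdot\|_{\BL,d}^*$ metrizes the weak topology on every $\|\cdot\|_{\TV}$-bounded subset of $\MM^+(S)$. Because weak convergence depends only on the topology of $S$ and not on the chosen admissible metric, this is the common object shared by $d_1$ and $d_2$; the tightness hypothesis, routed through Lemma \ref{precompact}, will supply the uniform tightness of images that allows one to transport convergence from $\|\cdot\|_{\BL,d_1}^*$ to $\|\cdot\|_{\BL,d_2}^*$.

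Suppose equicontinuity for $d_2$ fails at some $\mu_0\in\MM^+(S)$: then there exist $\eps>0$ and sequences $\nu_n\in\MM^+(S)$, $\lambda_n\in\Lambda$ with $\|\nu_n-\mu_0\|_{\BL,d_2}^*\to 0$ while $\|P_{\lambda_n}\nu_n-P_{\lambda_n}\mu_0\|_{\BL,d_2}^*\geq\eps$. Testing the first convergence against the constant function $1$ immediately gives $\|\nu_n\|_{\TV}\to\|\mu_0\|_{\TV}$, so $\{\nu_n\}$ is $\|\cdot\|_{\TV}$-bounded. The cited metrization result converts the $d_2$-BL convergence into weak convergence of $\nu_n$ to $\mu_0$, which by the same metrization applied to $d_1$ is exactly $\|\nu_n-\mu_0\|_{\BL,d_1}^*\to 0$; then the assumed $d_1$-equicontinuity yields $\|P_{\lambda_n}\nu_n-P_{\lambda_n}\mu_0\|_{\BL,d_1}^*\to 0$.

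Next I would set $K:=\{\nu_n:n\in\NN\}\cup\{\mu_0\}$, which is compact in $\MM^+(S)_{\BL,d_1}$ as a convergent sequence together with its limit. Lemma \ref{precompact} then gives that $\{P_\lambda\nu:\nu\in K,\,\lambda\in\Lambda\}$ is precompact in $\MM^+(S)_{\BL,d_1}$, hence uniformly tight and $\|\cdot\|_{\TV}$-bounded by Prokhorov's theorem. To finish, I would pass to a subsequence on which $\|P_{\lambda_n}\nu_n-P_{\lambda_n}\mu_0\|_{\BL,d_2}^*\geq\eps$ holds, and then by Prokhorov extract further subsequences along which $P_{\lambda_n}\nu_n$ and $P_{\lambda_n}\mu_0$ converge weakly, say to $\rho_1$ and $\rho_2$ respectively. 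The metrization of the weak topology by each of the two BL norms on this uniformly tight, TV-bounded set forces simultaneous convergence in $\|\cdot\|_{\BL,d_1}^*$ and $\|\cdot\|_{\BL,d_2}^*$; the $d_1$-BL convergence of the difference to $0$ identifies $\rho_1=\rho_2$, and hence $\|P_{\lambda_n}\nu_n-P_{\lambda_n}\mu_0\|_{\BL,d_2}^*\to 0$ along the subsequence, contradicting the lower bound $\eps$. The main conceptual obstacle is that equicontinuity is a uniform-structural notion and the two BL norms are genuinely inequivalent on $\MM^+(S)$; tightness (via Lemma \ref{precompact}) is doing the essential work by compactifying the relevant images into a slice on which the metric-free weak topology coincides with both BL norms.
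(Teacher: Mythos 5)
Your proof is correct, but it follows a genuinely different route from the paper's. The paper's argument is a ``soft'' topological one: by the Arzel\`a--Ascoli-type characterization of Theorem \ref{thrm:char rel compactness}, equicontinuity of the family (whose pointwise images are relatively compact by tightness and Prokhorov) in $C(\CP(S)_{\mathrm{weak}},\CP(S)_{\BL,d})$ is \emph{equivalent} to relative compactness for the compact-open topology; since relative compactness depends only on the topologies involved, and all admissible BL-norms induce the same (weak) topology on the positive cone, the paper reads this equivalence forwards for $d_1$ and backwards for $d_2$. You instead run a direct sequential argument by contradiction: negate $d_2$-equicontinuity at $\mu_0$, transfer the approximating sequence $(\nu_n)$ to $d_1$ via the metrization of weak convergence of positive measures, invoke $d_1$-equicontinuity, and then use Prokhorov (fed by tightness through Lemma \ref{precompact}) to extract weakly convergent subsequences of $P_{\lambda_n}\nu_n$ and $P_{\lambda_n}\mu_0$ separately, identify their limits via the $d_1$-BL convergence of the difference, and conclude $d_2$-BL convergence of the difference --- contradiction. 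Both proofs rest on the same two pillars (coincidence of all admissible BL-norm topologies with the weak topology on $\MM^+(S)$, and Prokhorov compactness from tightness), but yours is more elementary and self-contained, avoiding the compact-open topology machinery entirely, at the cost of being sequential and indirect. Your handling of the signed difference $P_{\lambda_n}\nu_n-P_{\lambda_n}\mu_0$ by splitting it into two positive sequences is exactly the right move, since BL-convergence of \emph{signed} measures to zero does not transfer between admissible metrics without compactness control; note, as a minor simplification, that Lemma \ref{precompact} is not strictly needed there: tightness of $\{P_\lambda\mu_0\}$ alone gives a weakly convergent subsequence $P_{\lambda_n}\mu_0\to\rho$, and then $\|P_{\lambda_n}\nu_n-P_{\lambda_n}\mu_0\|_{\BL,d_1}^*\to 0$ forces $P_{\lambda_n}\nu_n\to\rho$ in $\|\cdot\|_{\BL,d_1}^*$ as well, whence both converge to $\rho$ weakly and hence in $\|\cdot\|_{\BL,d_2}^*$.
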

The key point in the proof of Proposition \ref{prop:e-prop for one is e-prop for any} is a series of results on characterisation of compact sets in the space of continuous maps when equipped with the co-topology.
These can be stated in quite some generality, originating in \cite{kelley1955general, engelking1977general, BargleyYang}.
\begin{proof}  Let $d$ be the admissible metric on $S$ for which $(P_\lambda)$ is equicontinuous in $\mathcal{C}_d:=C(\CP(S)_{\mathrm{weak}}, \CP(S)_{\BL,d})$. Let $d'$ be any other admissible metric on $S$. We must show that $(P_\lambda)$ is an equicontinuous family in $\mathcal{C}_{d'}:=C(\CP(S)_{\mathrm{weak}}, \CP(S)_{\BL,d'})$.\\
	By assumption, $\{P_\lambda\mu:\lambda\in\Lambda\}$ is tight for every $\mu\in\CP(S)$. By Prokhorov's Theorem, it is relatively compact in $\CP(S)_{\BL,d}$, because the $\|\cdot\|_{\BL,d}$-norm topology coincides with the weak topology on $\CM^+(S)$. Because $(P_\lambda)$ is equicontinuous in $\mathcal{C}_d$, Theorem \ref{thrm:char rel compactness} yields that $(P_\lambda)$ is relatively compact in $\mathcal{C}_d$, for the co-topology. Since the topologies on $\CP(S)$ defined by the norms $\|\cdot\|_{\BL,d'}$, $d'$ admissible, all coincide with the weak topology, $(P_\lambda)$ is relatively compact in $\mathcal{C}_{d'}$ for any admissible metric $d'$. Again application of Theorem \ref{thrm:char rel compactness}, but now in opposite direction, yields that $(P_\lambda)$ is equicontinuous in $\mathcal{C}_{d'}$.
\end{proof}

\begin{prop}
	\label{prop_equivalence}
	Let $(P_\lambda)_{\lambda\in\Lambda}$ be a family of Markov operators on $(S,d)$. If $(P_\lambda)_{\lambda\in\Lambda}$ is tight, then the following are equivalent:
\begin{itemize}
	\item [(i)] For every $K\subset\MM(S)_{\BL}^+$ precompact, $(P_\lambda|_K)_{\lambda\in\Lambda}$ is equicontinuous on $K$.
	\item [(ii)] $(P_\lambda)_{\lambda\in\Lambda}$ is equicontinuous (on $S$).
\end{itemize}
\end{prop}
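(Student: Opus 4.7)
The plan is to prove the two implications separately. The direction (ii) $\Rightarrow$ (i) is immediate from the definitions: if $(P_\lambda)_{\lambda\in\Lambda}$ is equicontinuous at every $\mu \in \MM^+(S)$, then for any subset $K \subset \MM^+(S)$ the restriction $(P_\lambda|_K)$ is trivially equicontinuous on $K$, and the tightness assumption plays no role in this direction.

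For the nontrivial direction (i) $\Rightarrow$ (ii), I would invoke the Arzel\`a--Ascoli characterisation of relative compactness in the compact-open topology provided by Theorem \ref{thrm:char rel compactness}, applied to $\CF := \{P_\lambda : \lambda \in \Lambda\}$ viewed as a subset of $\mathcal{C} := C(\MM^+(S)_{\BL,d}, \MM^+(S)_{\BL,d})$. Since $\MM^+(S)_{\BL,d}$ is a metric space, it is a Hausdorff $k$-space, so the theorem applies; moreover, hypothesis (i) applied to a convergent sequence together with its limit (which forms a compact, hence precompact, set) shows via Lemma \ref{L1} that each $P_\lambda$ is sequentially---hence---continuous on $\MM^+(S)_{\BL,d}$, so indeed $\CF \subset \mathcal{C}$.

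Next I verify the two hypotheses of Theorem \ref{thrm:char rel compactness} for $\CF$. For condition (a), fix $\mu \in \MM^+(S)$: tightness of $(P_\lambda)$ makes $\CF[\mu] = \{P_\lambda \mu : \lambda \in \Lambda\}$ uniformly tight, while $\|P_\lambda \mu\|_{\TV} = \|\mu\|_{\TV}$ for every $\lambda$ gives uniform boundedness in total variation; Prokhorov's theorem then yields relative compactness of $\CF[\mu]$ in the weak topology on $\MM^+(S)$, which coincides with the $\|\cdot\|^*_{\BL,d}$-topology. Condition (b), equicontinuity on every compact subset of the domain, is immediate from hypothesis (i) since every compact set is precompact. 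Theorem \ref{thrm:char rel compactness} then delivers relative compactness of $\CF$ in $\mathcal{C}$, and the equivalent characterisation (b') of the same theorem yields equicontinuity of $\CF$ on all of $\MM^+(S)_{\BL,d}$, which is precisely (ii). I do not foresee a genuine obstacle: the substance of the argument is the recognition that tightness supplies exactly the pointwise relative compactness of orbits needed to combine with equicontinuity on precompact sets through the Arzel\`a--Ascoli machinery already set up earlier in the paper.
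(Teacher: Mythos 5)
Your proof is correct and follows essentially the same route as the paper, which only indicates that one should apply the Arzel\`a--Ascoli characterisations (Theorems \ref{aa_kelley} and \ref{aa_b_y}, packaged as Theorem \ref{thrm:char rel compactness}) to the $k$-space $(\MM^+(S)_{\BL},\|\cdot\|_{\BL,d}^*)$; your write-up is exactly that argument with the details (continuity of each $P_\lambda$, Prokhorov for pointwise relative compactness, and the passage from (b) to (b')) filled in.
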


To prove Proposition \ref{prop_equivalence} we apply Theorem \ref{aa_kelley} and Theorem \ref{aa_b_y} to the $k$-space\\ ${(\MM^+(S)_{\BL}, \|\cdot\|_{\BL,d}^*)}$.

Now we are in a position to prove Theorem \ref{Theorem_composition}.
\begin{proof}
	(Theorem \ref{Theorem_composition})
	\label{proof_theorem_composition}
	Let $(P_\lambda)_{\lambda\in\Lambda}$ and $(Q_\gamma)_{\gamma\in\Gamma}$,with families of dual operators $(U_\lambda)_{\lambda\in\Lambda}$ and $(V_\gamma)_{\gamma\in\Gamma}$ respectively, be equicontinuous. Let $f\in\mathrm{\BL}(S,d)$. Then $\{U_\lambda f|\lambda\in\Lambda\}=\mathcal{E}$ is equicontinuous. Let $d_\mathcal{E}$ be the associated admissible metric as defined in (\ref{new_metric}) with $\mathcal{E}(f)$ replaced by $\mathcal{E}$.  Then $\mathcal{E}$ is contained in the unit ball $B_\mathcal{E}$ of $\left(\mathrm{\BL}(S,d_\mathcal{E}),\|\cdot\|_{\BL,d_{\mathcal{E}}}\right)$.\\
	As $(Q_\gamma)_{\gamma\in\Gamma}$ is an equicontinuous family for $d$, by Proposition \ref{prop:e-prop for one is e-prop for any} it is equicontinuous for any admissible metric on $S$. Hence, it is equicontinuous for $d_\mathcal{E}$. Then, by Theorem \ref{Thrm_equivalence} (iii)
	\[\mathcal{F}=\left\{V_\gamma g:\,g\in B_\mathcal{E},\, \gamma\in\Gamma\right\}\text{ is equicontinuous in } C_b(S).\]
In particular, as subset of $\mathcal{F}$,
	\[\left\{V_\gamma U_\lambda f:\,\gamma\in\Gamma, \lambda\in\Lambda\right\}\text{ is equicontinuous in } C_b(S).\]
	Hence, by Theorem \ref{Thrm_equivalence}, $(P_\lambda Q_\gamma)_{\lambda\in\Lambda, \gamma\in\Gamma}$ is equicontinuous for $d$. \\
	If $(P_\lambda)_{\lambda\in\Lambda}$ is an equicontinuous and tight family,  then Lemma \ref{precompact} implies that for any $K\subset\MM^+(S)_{\BL}$ compact, $K_Q:=\{Q_\gamma\nu|\gamma\in\Gamma, \nu\in K\}$ is precompact. Thus, $\{P_\lambda\mu|\lambda\in\Lambda, \mu\in K_Q\}=\{P_\lambda Q_\gamma\nu|\lambda\in\Lambda, \gamma\in\Gamma, \nu\in K\}\subset\MM^+(S)_{\BL}$ is precompact. In particular, this holds for  for $K=\{\nu_0\}$.
\end{proof}
	
	In the above proof of Theorem \ref{Theorem_composition} we only need assumption, that the family $(Q_\gamma)_{\gamma\in\Gamma}$ is tight.
	In case both $(P_\lambda)_{\lambda\in\Lambda}$ and $(Q_\gamma)_{\gamma\in\Gamma}$ are tight, there is an alternative way of proving Theorem \ref{Theorem_composition} using Lemma \ref{precompact}.

As a consequence of Theorem \ref{Theorem_composition} we get the following Corollary.
\begin{clry}
The composition of finite number of equicontinuous and tight families of Markov operators is equicontinuous and tight.
\end{clry}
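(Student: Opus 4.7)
The plan is to proceed by induction on the number $n$ of families being composed, using Theorem \ref{Theorem_composition} as the inductive engine. Let $(P^{1}_{\lambda_{1}})_{\lambda_{1}\in\Lambda_{1}},\ldots,(P^{n}_{\lambda_{n}})_{\lambda_{n}\in\Lambda_{n}}$ be equicontinuous and tight families of Markov operators on $(S,d)$, and consider the composed family
\[
\mathcal{G}_{n}:=\bigl\{P^{1}_{\lambda_{1}}P^{2}_{\lambda_{2}}\cdots P^{n}_{\lambda_{n}}\,:\,\lambda_{i}\in\Lambda_{i},\ i=1,\ldots,n\bigr\},
\]
indexed by the product set $\Lambda_{1}\times\cdots\times\Lambda_{n}$.

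The base case $n=1$ is trivial (the family is the given one) and the case $n=2$ is precisely Theorem \ref{Theorem_composition}, which guarantees that $\mathcal{G}_{2}=\{P^{1}_{\lambda_{1}}P^{2}_{\lambda_{2}}\}$ is equicontinuous (since $(P^{2}_{\lambda_{2}})$ is tight) and also tight (since both families are tight).

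For the inductive step, assume the statement for $n-1$ families, applied to $(P^{2}_{\lambda_{2}})_{\lambda_{2}\in\Lambda_{2}},\ldots,(P^{n}_{\lambda_{n}})_{\lambda_{n}\in\Lambda_{n}}$. This gives that the family
\[
\mathcal{H}_{n-1}:=\bigl\{P^{2}_{\lambda_{2}}\cdots P^{n}_{\lambda_{n}}\,:\,\lambda_{i}\in\Lambda_{i},\ i=2,\ldots,n\bigr\}
\]
is equicontinuous and tight on $(S,d)$. I would then apply Theorem \ref{Theorem_composition} with $(P_{\lambda})_{\lambda\in\Lambda}:=(P^{1}_{\lambda_{1}})_{\lambda_{1}\in\Lambda_{1}}$ and $(Q_{\gamma})_{\gamma\in\Gamma}:=\mathcal{H}_{n-1}$. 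Since $\mathcal{H}_{n-1}$ is equicontinuous and tight, the theorem yields that $\mathcal{G}_{n}=\{P^{1}_{\lambda_{1}}\cdot Q\,:\,\lambda_{1}\in\Lambda_{1},\ Q\in\mathcal{H}_{n-1}\}$ is equicontinuous, and since $(P^{1}_{\lambda_{1}})$ is also tight, the second conclusion of Theorem \ref{Theorem_composition} delivers that $\mathcal{G}_{n}$ is tight as well.

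There is essentially no obstacle beyond the bookkeeping: the only subtlety is to verify that the inductive hypothesis applies to $\mathcal{H}_{n-1}$ as a single family (indexed by $\Lambda_{2}\times\cdots\times\Lambda_{n}$) rather than to the individual factors, but this is exactly what the induction hypothesis provides. Hence the induction closes and the Corollary follows.
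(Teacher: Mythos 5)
Your induction is correct and is exactly the argument the paper intends: the Corollary is stated as an immediate consequence of Theorem \ref{Theorem_composition}, and peeling off one factor at a time (with the inductive hypothesis supplying equicontinuity and tightness of the composed tail, so that both hypotheses and both conclusions of the theorem apply) is the evident proof. No gaps.
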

\section{Proof of Convergence of Lie-Trotter product formula}
\label{proof_main}
\label{Lie-Trotter}
Throughout this section we assume that $(P_t^1)_{t\geq 0}$ and $(P_t^2)_{t\geq 0}$ are Markov-Feller semigroups on $S$ with dual semigroups $(U_t^1)_{t\geq 0}$, $(U_t^2)_{t\geq 0}$, respectively. 

We start by examining some consequences of Assumptions \ref{Assumption_equicontinuity_tightness}-\ref{Assumption_4_extended_commutator} formulated in Section \ref{section_main_theorems}. Introduce \[\mathcal{F}_<(\delta):=\left\{\left[P^1_{\frac{t}{n}}P^2_{\frac{t}{n}}\right]^i:n\in\NN, i\leq n-1,  t\in[0,\delta]\right\}.\]
\begin{lemma}
	\label{lemma_eq}
 The following statements hold:
 \begin{itemize}
		\item [(i)] If Assumption \ref{Assumption_equicontinuity_tightness} holds, then $\mathcal{P}^1(\delta)$ and $\mathcal{P}^2(\delta)$ are equicontinuous and tight for every $\delta>0$. 
		\item [(ii)] If $\mathcal{F}(\delta_2)$ is equicontinuous then $\mathcal{F}_<(\delta_2)$ is equicontinuous.
		\item [(iii)] $\mathcal{F}_<(\delta_2)$ is equicontinuous and tight iff $\mathcal{F}(\delta_2)$ is equicontinuous and tight.
\end{itemize}
\end{lemma}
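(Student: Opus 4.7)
My plan is to prove all three statements by reducing them to inclusions between families of Markov operators and then invoking the Corollary to Theorem \ref{Theorem_composition}, together with the observation that equicontinuity and tightness both pass to subfamilies (the former because the defining $(\eps,\delta)$-estimate only becomes easier for fewer operators; the latter because a subset of a uniformly tight set of measures is uniformly tight).

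For (i), I would split into two regimes. When $\delta \leq \delta_1$ there is nothing to do, as $\mathcal{P}^i(\delta) \subseteq \mathcal{P}^i(\delta_1)$. When $\delta > \delta_1$, I would pick $N := \lceil \delta/\delta_1 \rceil \in \NN$ so that $t/N \in [0,\delta_1]$ for every $t \in [0,\delta]$; the semigroup property then gives $P^i_t = (P^i_{t/N})^N$, exhibiting $\mathcal{P}^i(\delta)$ as a subfamily of the $N$-fold composition of $\mathcal{P}^i(\delta_1)$ with itself, to which the Corollary to Theorem \ref{Theorem_composition} applies.

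For (ii), my plan is to show the slightly stronger inclusion $\mathcal{F}_<(\delta_2) \subseteq \mathcal{F}(\delta_2) \cup \{\mathrm{Id}\}$ by a simple reparametrisation. Given $[P^1_{t/n}P^2_{t/n}]^i$ with $0 \leq i \leq n-1$ and $t \in [0,\delta_2]$: if $i = 0$ it is the identity, while if $i \geq 1$ I would set $s := it/n$, so that $s \in [0,\delta_2]$ and $s/i = t/n$, whence
\[
\left[P^1_{t/n}P^2_{t/n}\right]^i \;=\; \left[P^1_{s/i}P^2_{s/i}\right]^i \;\in\; \mathcal{F}(\delta_2).
\]
Equicontinuity of $\mathcal{F}_<(\delta_2)$ then inherits from that of $\mathcal{F}(\delta_2)$ together with the (trivial) equicontinuity of the identity.

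For (iii), the ``if'' direction is immediate from the same inclusion. For the ``only if'' direction, assuming $\mathcal{F}_<(\delta_2)$ is equicontinuous and tight, I would decompose, for $n \geq 1$ and $t \in [0,\delta_2]$,
\[
\left[P^1_{t/n}P^2_{t/n}\right]^n \;=\; \left[P^1_{t/n}P^2_{t/n}\right]^{n-1} \cdot P^1_{t/n} \cdot P^2_{t/n},
\]
with the first factor in $\mathcal{F}_<(\delta_2) \cup \{\mathrm{Id}\}$ and the remaining two in $\mathcal{P}^1(\delta_2)$ and $\mathcal{P}^2(\delta_2)$, which are equicontinuous and tight by part (i) under Assumption \ref{Assumption_equicontinuity_tightness}. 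The Corollary to Theorem \ref{Theorem_composition} then delivers equicontinuity and tightness of $\mathcal{F}(\delta_2)$. The only mildly subtle point in the whole argument is the reparametrisation $s := it/n$ in (ii); everything else is a matter of inclusions and the composition corollary.
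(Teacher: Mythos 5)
Your proof is correct and takes essentially the same route as the paper: part (i) via the semigroup identity $P^i_t=(P^i_{t/N})^N$ and the composition corollary, part (ii) via exactly the reparametrisation $s=it/n$ giving $\mathcal{F}_<(\delta_2)\subset\mathcal{F}(\delta_2)$, and part (iii) by factoring $[P^1_{t/n}P^2_{t/n}]^n$ and applying the composition corollary again. The one small divergence is in (iii): you compose with $\mathcal{P}^1(\delta_2)\cdot\mathcal{P}^2(\delta_2)$, so your forward implication invokes Assumption \ref{Assumption_equicontinuity_tightness} through part (i), whereas the paper instead extracts the factor $P^1_{t/n}P^2_{t/n}$ as the $i=1$ subfamily of $\mathcal{F}_<(\delta_2)$ itself, keeping the equivalence self-contained --- in the context where the lemma is used this difference is immaterial.
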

\begin{proof} 
\begin{itemize}
		\item [(i)] Is an immediate consequence of Theorem \ref{Theorem_composition} and the semigroup property of $(P^i_t)_{t\geq 0}$.
		\item [(ii)] Let $t\in[0,\delta_2]$ and $i,n\in\NN$ such that $i\leq n-1$. Observe that $\left[P^1_{\frac{t}{n}}P^2_{\frac{t}{n}}\right]^{i}=\left[P^1_{\frac{1}{i}\frac{it}{n}}P^2_{\frac{1}{i}\frac{it}{n}}\right]^{i}$ with $\frac{it}{n}\in [0,\delta_2]$. Hence $\mathcal{F}_<(\delta_2)\subset\mathcal{F}(\delta_2)$.
		 A subset of an equicontinuous family of maps is equicontinuous. 
\item [(iii)] 
The following subsets of $\mathcal{F}_<(\delta_2)$,
\[\mathcal{F}_<^1(\delta):=\left\{P^1_{\frac{t}{n}}P^2_{\frac{t}{n}}: n\in\NN, t\in[0,\delta]\right\}\]
and
\[\mathcal{F}_<^*(\delta):=\left\{\left[P^1_{\frac{t}{n}}P^2_{\frac{t}{n}}\right]^{n-1}: n\in\NN, t\in[0,\delta]\right\}\]
are equicontinuous and tight, because $\mathcal{F}_<(\delta_2)$ is. Note that $\mathcal{F}\subset\mathcal{F}_<^1(\delta_2)\cdot\mathcal{F}_<^*(\delta_2)$. According to  
Theorem \ref{Theorem_composition}  the latter product is equicontinuous and tight. Hence $\mathcal{F}$ is equicontinuous and tight.\\
In part (ii) we observe that $\mathcal{F}_<(\delta_2)\subset \mathcal{F}(\delta_2)$, so equicontinuity and tightness of $\mathcal{F}(\delta_2)$ implies that of $\mathcal{F}_<(\delta_2)$. \qedhere
\end{itemize}\end{proof}
\begin{lemma}[Eventual equicontinuity]
	\label{lemma_eventually_equicontinuous}
	If Assumptions \ref{Assumption_equicontinuity_tightness} and \ref{Assumption_stability} hold, then for each compact $\Gamma\subset\RR_+$ there exists $N=N_\Gamma$ such that\[\mathcal{F}_\Gamma^N:=\left\{\left[P^1_{\frac{t}{n}}P^2_\frac{t}{n}\right]^n:n\in\NN, n\geq N, t\in\Gamma\right\}\] is equicontinuous.
\end{lemma}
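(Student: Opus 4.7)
The plan is to split $[P^1_{t/n} P^2_{t/n}]^n$ into a number of blocks bounded uniformly in $n$, each lying in the equicontinuous family $\mathcal{F}(\delta_2)$, and then invoke Theorem \ref{Theorem_composition} iteratively, drawing tightness from the $\mathcal{P}^i$ families via Lemma \ref{lemma_eq}(i).

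First I would set $T := \sup \Gamma$ (finite by compactness of $\Gamma$), pick $k_0 \in \NN$ with $T/k_0 \leq \delta_2$, and take $N = N_\Gamma := \lceil k_0 T/\delta_2 \rceil$. For $n \geq N$ and $t \in \Gamma$, write $n = k_0 m + r$ with $m = \lfloor n/k_0 \rfloor$ and $0 \leq r < k_0$; this choice of $N$ yields both $mt/n \leq T/k_0 \leq \delta_2$ and $rt/n \leq k_0 T/n \leq \delta_2$. Then I would exhibit the factorisation
\[
\left[P^1_{t/n} P^2_{t/n}\right]^n = \left[P^1_{t/n} P^2_{t/n}\right]^{r} \cdot \bigl(\left[P^1_{t/n} P^2_{t/n}\right]^{m}\bigr)^{k_0},
\]
and, via the rewriting $\left[P^1_{t/n} P^2_{t/n}\right]^m = \left[P^1_{s/m} P^2_{s/m}\right]^m$ with $s := mt/n \leq \delta_2$, observe that each of the $k_0 + 1$ factors lies in $\mathcal{F}(\delta_2)$, hence is equicontinuous by Assumption \ref{Assumption_stability}. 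Consequently $\mathcal{F}_\Gamma^N$ is contained in the $(k_0+1)$-fold composition of $\mathcal{F}(\delta_2)$ with itself.

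Passing from equicontinuity of each factor to equicontinuity of the composition requires at most $k_0$ iterated applications of Theorem \ref{Theorem_composition}, each of which needs tightness of the inner factor. By Lemma \ref{lemma_eq}(i) together with Theorem \ref{Theorem_composition}, the family $\mathcal{P}^1(\delta_1) \cdot \mathcal{P}^2(\delta_1)$ (and hence $P^1_{t/n} P^2_{t/n}$ once $T/n \leq \delta_1$) is equicontinuous and tight, and tightness is preserved under finite compositions. The main obstacle I expect is the uniform-in-$n$ propagation of tightness through the inner blocks $\left[P^1_{t/n} P^2_{t/n}\right]^m$, whose iteration depth $m = \lfloor n/k_0 \rfloor$ grows with $n$: a pointwise-in-$m$ iteration of Theorem \ref{Theorem_composition} would deliver tightness only for each \emph{fixed} $m$. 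This is circumvented by treating each block as a single element of $\mathcal{F}(\delta_2)$ via the reparametrisation $s = mt/n \in [0,\delta_2]$, so that the bookkeeping of the iteration depth is absorbed into the family $\mathcal{F}(\delta_2)$, whose equicontinuity uniformly in $s$ and $m$ is supplied directly by Assumption \ref{Assumption_stability}; the tightness required at the innermost step is then fed in through the seed $\mathcal{P}^1(\delta_1) \cdot \mathcal{P}^2(\delta_1)$.
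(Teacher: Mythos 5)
Your block decomposition is arithmetically fine (the reparametrisations $s=mt/n\le T/k_0\le\delta_2$ and $rt/n\le k_0T/n\le\delta_2$ do place every factor in $\mathcal{F}(\delta_2)$), but the composition step has a genuine gap. Theorem \ref{Theorem_composition} requires the \emph{inner} (first-acting) family to be tight, and Assumption \ref{Assumption_stability} gives only equicontinuity of $\mathcal{F}(\delta_2)$, not tightness; that tightness of $\mathcal{F}(\delta_2)$ is a genuinely separate hypothesis is visible in Lemma \ref{lemma_eq}(iii) and in the lemma following Lemma \ref{lemma_eventually_equicontinuous}, where it has to be assumed in addition to Assumptions \ref{Assumption_equicontinuity_tightness} and \ref{Assumption_stability}. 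In your product $\left[P^1_{t/n}P^2_{t/n}\right]^{r}\circ G_{k_0}\circ\cdots\circ G_1$ the innermost factor $G_1=\left[P^1_{t/n}P^2_{t/n}\right]^{m}$ is therefore only known to be equicontinuous, so already the first application of Theorem \ref{Theorem_composition} (to $G_2\circ G_1$) is blocked. Your closing claim that "the tightness required at the innermost step is fed in through the seed $\mathcal{P}^1(\delta_1)\cdot\mathcal{P}^2(\delta_1)$" does not repair this: no such seed occurs at the innermost position of your factorisation, and if you peel one pair $P^1_{t/n}P^2_{t/n}$ off $G_1$, Theorem \ref{Theorem_composition} returns only \emph{equicontinuity} of $G_1$ --- its "moreover" clause would need the outer part $\left[P^1_{t/n}P^2_{t/n}\right]^{m-1}$ to be tight as well --- so the tightness of the seed is consumed at the first step and cannot be propagated outward through the remaining $k_0$ levels.

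The paper's proof arranges the two ingredients the other way around, so that only one application of Theorem \ref{Theorem_composition} with a possibly non-tight family is ever needed. Writing $n=N+k$, it factors $\left[P^1_{t/n}P^2_{t/n}\right]^{n}=\left[P^1_{t/n}P^2_{t/n}\right]^{k}\circ\left[P^1_{t/n}P^2_{t/n}\right]^{N}$: the \emph{inner} block has fixed depth $N$ and is equicontinuous \emph{and tight}, being a finite composition of the tight equicontinuous families $\mathcal{P}^i(\delta)$ from Assumption \ref{Assumption_equicontinuity_tightness} (via the Corollary to Theorem \ref{Theorem_composition}), while the entire variable-depth remainder is treated as a single reparametrised element of $\mathcal{F}(\delta_2)$ and sits on the \emph{outside}, where Theorem \ref{Theorem_composition} demands only equicontinuity. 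If you want to salvage your argument, that is the arrangement to aim for: one fixed-depth tight block acting first, and exactly one variable-depth $\mathcal{F}(\delta_2)$-block acting last, rather than a chain of $\mathcal{F}(\delta_2)$-blocks none of which is tight.
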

\begin{proof}
		Let $N\in\NN$ be such that $\frac{t}{N}\leq \min(\delta_1,\delta_2)=:\delta$ for all $t\in\Gamma$. For $n\geq N$ we have, with $k:=n-N$
		\begin{equation}
		\nonumber
		\left[P^1_{\frac{t}{n}}P^2_{\frac{t}{n}}\right]^n=		\left[P^1_{\frac{1}{k}\cdot\frac{k\cdot t}{N+k}}P^2_{\frac{1}{k}\cdot\frac{k\cdot t}{N+k}}\right]^{k+N}= \left[P^1_{\frac{1}{k}\cdot\frac{k\cdot t}{N+k}}P^2_{\frac{1}{k}\cdot\frac{k\cdot t}{N+k}}\right]^{k}\left[P^1_{\frac{ t}{N+k}}P^2_{\frac{\cdot t}{N+k}}\right]^{N}.
		\end{equation}
		Since $\frac{ t}{N+k}\in[0,\delta]$ for $k\in\NN_0$ and $\mathcal{P}^1(\delta)$ and $\mathcal{P}^2(\delta)$ are equicontinuous and tight (by assumption), the family $\left\{\left[P^1_{\frac{ t}{N+k}}P^2_{\frac{\cdot t}{N+k}}\right]^{N}:k\in\NN_0, t\in\Gamma\right\}$ is equicontinuous and tight according to Theorem \ref{Theorem_composition}. The family $\left\{\left[P^1_{\frac{1}{k}\cdot\frac{k\cdot t}{N+k}}P^2_{\frac{1}{k}\cdot\frac{k\cdot t}{N+k}}\right]^k:k\in\NN, t\in\Gamma\right\}\subset\mathcal{F}(\delta_2)$ is equicontinuous by Assumption \ref{Assumption_stability}. Hence Theorem \ref{Theorem_composition} yields equicontinuity of $\mathcal{F}_\Gamma^N$.
\end{proof}
\begin{lemma}
	If Assumptions \ref{Assumption_equicontinuity_tightness} and \ref{Assumption_stability} hold and, additionally, $\mathcal{F}(\delta)$ is a tight family for some $\delta=\delta_2>0$, then  $\mathcal{F}(\delta)$ is equicontinuous and tight for any $\delta>0$.
\end{lemma}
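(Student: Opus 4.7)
The plan is to bootstrap from the time window $[0,\delta_2]$ to an arbitrary window $[0,\delta]$ by writing each element of $\mathcal{F}(\delta)$ as a composition of a \emph{bounded} number of factors drawn from $\mathcal{F}(\delta_2)$ and from $\mathcal{P}^1(\delta)\cup\mathcal{P}^2(\delta)$, and then invoking Theorem \ref{Theorem_composition} to transfer equicontinuity and tightness through the composition. The case $\delta\le\delta_2$ is trivial since $\mathcal{F}(\delta)\subset\mathcal{F}(\delta_2)$, so I assume $\delta>\delta_2$.

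Pick an integer $m$ with $m\ge \delta/\delta_2$, so that $\delta/m\le\delta_2$. For $n\ge m$ and $t\in[0,\delta]$, perform division with remainder $n=m\ell+r$ with $0\le r<m$, and use the algebraic identity
\[
\left[P^1_{t/n} P^2_{t/n}\right]^n \;=\; \left(\left[P^1_{s/\ell} P^2_{s/\ell}\right]^\ell\right)^m\circ\left[P^1_{t/n} P^2_{t/n}\right]^r,
\]
with $s:=\ell t/n$. Since $\ell\le n/m$, one has $s=\ell t/n\le t/m\le\delta/m\le\delta_2$, and $s/\ell=t/n$, so the inner iterate $[P^1_{s/\ell}P^2_{s/\ell}]^\ell$ lies in $\mathcal{F}(\delta_2)$. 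The remainder factor is a composition of at most $2(m-1)$ operators from $\mathcal{P}^1(\delta)\cup\mathcal{P}^2(\delta)$.

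With the decomposition in hand, the rest is bookkeeping. By Lemma \ref{lemma_eq}(i), $\mathcal{P}^i(\delta)$ is equicontinuous and tight for every $\delta>0$; by the hypothesis together with Assumption \ref{Assumption_stability}, the family $\mathcal{F}(\delta_2)$ is equicontinuous and tight. Theorem \ref{Theorem_composition}, applied iteratively, gives that the $m$-fold composition $\mathcal{F}(\delta_2)^m$ is equicontinuous and tight, and that the finite union over $r\in\{0,\dots,m-1\}$ of iterated compositions of $\mathcal{P}^1(\delta)\mathcal{P}^2(\delta)$ is equicontinuous and tight (a finite union trivially preserves both properties, by taking minima of the moduli and unions of the compact sets). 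One further application of Theorem \ref{Theorem_composition} yields equicontinuity and tightness of the composition, covering all $n\ge m$. The finitely many leftover $n<m$ are handled by the same union argument, since $[P^1_{t/n}P^2_{t/n}]^n$ is then a product of at most $2(m-1)$ operators from $\mathcal{P}^1(\delta)\cup\mathcal{P}^2(\delta)$.

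The one point that requires genuine care, rather than formal manipulation, is why the extra tightness hypothesis on $\mathcal{F}(\delta_2)$ is essential: Theorem \ref{Theorem_composition} only delivers equicontinuity of a composition when the inner factor is tight, and only delivers tightness of the composition when both factors are tight. Iterating $m$ times to form $\mathcal{F}(\delta_2)^m$ therefore requires tightness of $\mathcal{F}(\delta_2)$ at every intermediate step; without it, the induction cannot close and equicontinuity of $\mathcal{F}(\delta)$ for $\delta$ larger than $\delta_2$ would not follow. This is the conceptual heart of the lemma, and isolates exactly what the added hypothesis buys us beyond Assumption \ref{Assumption_stability}.
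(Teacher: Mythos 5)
Your proof is correct and follows essentially the same route as the paper: decompose $\left[P^1_{t/n}P^2_{t/n}\right]^n$ for $t$ in the larger interval into a bounded number of blocks each lying in $\mathcal{F}(\delta_2)$ plus a short remainder of $\mathcal{P}^i$-factors, then push equicontinuity and tightness through the composition with Theorem \ref{Theorem_composition}. The paper carries this out only as an even/odd doubling step from $\delta_2$ to $2\delta_2$ (the case $m=2$ of your division with remainder), so your version is in fact the more complete one, reaching arbitrary $\delta$ in a single step.
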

\begin{proof}
	Let $\delta_2>0$ such that Assumption \ref{Assumption_stability} holds for $\delta_2$. Let
	\begin{equation}
	\nonumber
	\begin{array}{llll}
	&\mathcal{F}(2\delta_2)&:=&\left\{\left[P^1_{\frac{t}{n}}P^2_{\frac{t}{n}}\right]^n:t\in[0,2\delta_2], n\in\NN\right\}\\
	&&=&\underbrace{\left\{\left[P^1_{\frac{t'}{m}}P^2_{\frac{t'}{m}}\right]^{2m}:t':=\frac{t}{2}\in[0,\delta_2], m\in\NN\right\}}_{\mathcal{F}^{\text{even}}_m}\cup\underbrace{\left\{\left[P^1_{\frac{t'}{2m+1}}P^2_{\frac{t'}{2m+1}}\right]^{2m+1}:t'\in[0,\delta_2], m\in\NN\right\}}_{\mathcal{F}^{\text{odd}}_m}
	\end{array}
	\end{equation}
	 
		Due to Theorem \ref{Theorem_composition}, $\mathcal{F}^{\text{even}}_m(\delta_2)$ is an equicontinuous and tight family as a product of equicontinuous and tight families.
	\begin{equation}
	\nonumber
	\begin{array}{llll}
	&\mathcal{F}^{\text{odd}}_m(\delta_2)&=&\left\{\left[P^1_{\frac{t_m}{m}}P^2_{\frac{t_m}{m}}\right]^{2m+1}:t_m=t\cdot\frac{m}{2m+1}, t\in[0,\delta_2], m\in\NN\right\}\\
	&&\subset&\left\{\left[P^1_{\frac{t_m}{m}}P^2_{\frac{t_m}{m}}\right]\left[P^1_{\frac{t_m}{m}}P^2_{\frac{t_m}{m}}\right]^{m}\left[P^1_{\frac{t_m}{m}}P^2_{\frac{t_m}{m}}\right]^{m}:t_m=t\cdot\frac{m}{2m+1}, t\in[0,\delta_2], m\in\NN\right\}
	\end{array}
	\end{equation}
	Hence, due to Theorem \ref{Theorem_composition}, $\mathcal{F}^{\text{odd}}_m(\delta_2)$ is an equicontinuous and tight family. 
\end{proof}

\begin{lemma}
	\label{lemma_equicontinuity_of_E} Let $f\in \mathrm{\BL}(S,d)$ and  $\delta=\min(\delta_1,\delta_2)$. If Assumptions \ref{Assumption_equicontinuity_tightness} and \ref{Assumption_stability} hold, then
 $\mathcal{E}(f)$ defined by (\ref{def_E}) is equicontinuous in $C_b(S)$.
\end{lemma}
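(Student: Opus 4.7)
The plan is to reduce the claim about equicontinuity in $C_b(S)$ to an equicontinuity statement for an associated family of Markov operators on $\MM^+(S)_{\BL,d}$, via the dual characterization of Theorem \ref{Thrm_equivalence}. A typical element of $\mathcal{E}(f)$,
\[
g \,=\, U^2_s U^1_{s'}\bigl[U^2_{t/n} U^1_{t/n}\bigr]^n f,
\]
is exactly $R_{n,s,s',t}^{\,*} f$, where $R_{n,s,s',t}^{\,*}$ denotes the dual of the Markov operator
\[
R_{n,s,s',t} \,:=\, \bigl[P^1_{t/n} P^2_{t/n}\bigr]^n P^1_{s'} P^2_s.
\]
This follows from the general identity $(A_1 \cdots A_k)^* = U_{A_k}\cdots U_{A_1}$, i.e.\ composition of Markov operators on measures corresponds to composition of the duals in reverse order. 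Hence, by Theorem \ref{Thrm_equivalence}, it suffices to show that
\[
\mathcal{R} \,:=\, \bigl\{R_{n,s,s',t} : n\in\NN,\; s,s',t\in[0,\delta]\bigr\}
\]
is an equicontinuous family of Markov operators on $(S,d)$.

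To establish this, I would apply Theorem \ref{Theorem_composition} twice. Since $\delta \leq \delta_1$, Assumption \ref{Assumption_equicontinuity_tightness} yields that $\mathcal{P}^1(\delta)$ and $\mathcal{P}^2(\delta)$ are both equicontinuous and tight. A first application of Theorem \ref{Theorem_composition} to this pair gives that the product $\mathcal{P}^1(\delta)\cdot\mathcal{P}^2(\delta)=\{P^1_{s'}P^2_s : s,s'\in[0,\delta]\}$ is equicontinuous and tight (the "moreover" part of the theorem applies, since both factors are tight). Since $\delta \leq \delta_2$, Assumption \ref{Assumption_stability} yields that $\mathcal{F}(\delta)$ is equicontinuous. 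A second application of Theorem \ref{Theorem_composition}, with the equicontinuous family $\mathcal{F}(\delta)$ as the outer (left) factor and the equicontinuous and tight family $\mathcal{P}^1(\delta)\cdot\mathcal{P}^2(\delta)$ as the inner (right) factor, gives equicontinuity of $\mathcal{F}(\delta)\cdot\mathcal{P}^1(\delta)\cdot\mathcal{P}^2(\delta)$, which contains $\mathcal{R}$. As equicontinuity is inherited by subfamilies, $\mathcal{R}$ itself is equicontinuous.

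The only delicate point I foresee is the bookkeeping of composition order and its reversal under duality, together with the verification that the factorization of $R_{n,s,s',t}$ slots correctly into the hypotheses of Theorem \ref{Theorem_composition} (in particular, that the \emph{inner} factor is the one required to be tight). Once this identification is made, the proof is a direct two-step application of Theorem \ref{Theorem_composition}, followed by an appeal to the implication (i)$\Rightarrow$(ii) of Theorem \ref{Thrm_equivalence}, which translates the equicontinuity of $\mathcal{R}$ back into the desired equicontinuity of $\mathcal{E}(f)\subset C_b(S)$. No substantive obstacle beyond this is expected.
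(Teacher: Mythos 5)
Your proposal is correct and follows exactly the route the paper intends: the paper itself only remarks that the lemma "is a consequence of Assumptions \ref{Assumption_equicontinuity_tightness} and \ref{Assumption_stability} and Theorem \ref{Thrm_equivalence}", and your argument supplies the missing details by identifying $\mathcal{E}(f)$ as $\{U_Rf : R\in\mathcal{F}(\delta)\cdot\mathcal{P}^1(\delta)\cdot\mathcal{P}^2(\delta)\}$ and applying Theorem \ref{Theorem_composition} twice (with the tight factor correctly placed on the inner/right side) before translating back via Theorem \ref{Thrm_equivalence}. The order-reversal under duality and the factorization of $R_{n,s,s',t}$ are both handled correctly.
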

Note that $\mathcal{E}(f)$ depends on the choice of $f$. 	
	Lemma \ref{lemma_equicontinuity_of_E} is a consequence of Assumptions \ref{Assumption_equicontinuity_tightness} and \ref{Assumption_stability} and Theorem \ref{Thrm_equivalence} .

\begin{remark}
	Technically, one requires that particular subsets of $\mathcal{E}(f)$ are equicontinuous. Namely, that 
	\[\mathcal{E}_k(f)=\left\{U_{\frac{lt}{kn}}^2U_{\frac{jt}{kn}}^1\left[U^2_{\frac{t}{n}}U^1_{\frac{t}{n}}\right]^nf:n,j,l,i\in\NN,j\leq kn, i\leq n-1, l\leq kn ,t\in[0,\delta_2]\right\}\] is equicontinuous for every $k$. This seems to be quite too technical a condition. 
\end{remark}
\begin{remark}
	The commutator condition that we propose in Assumption \ref{Assumption_commutator} is weaker than the commutator conditions in \cite{Kuhnemund1}, conditions $(C)$ and $(C^*)$ in \cite{Colombo2004} and commutator condition in Proposition 3.5 in \cite{Colombo_2009}.
\end{remark}
For later reference, we present some properties of function $t\mapsto \omega(t):=\omega_f(t,\mu_0)$, that occurs in Assumptions \ref{Assumption_commutator} and \ref{Assumption_4_extended_commutator}.
	\begin{lemma}
		\label{lemma_omega} Let $\omega=\omega_f(\cdot,\mu_0):\RR_+\to\RR_+$ be a continuous, nondecreasing function such that Dini condition (\ref{equation_commutator_condition}) in Assumption \ref{Assumption_commutator} holds. Then  $\lim_{t\to 0^+}\omega(t)=0$ and for any $0<a<1$.
		\begin{itemize}
			\item [\textit{(a)}] $\sum_{n=1}^\infty \omega(a^nt)<\infty$ for all $t>0$;
			\item  [\textit{(b)}] $\lim_{t\to 0}\sum_{n=1}^\infty \omega(a^nt)=0$. 
		\end{itemize}
\end{lemma}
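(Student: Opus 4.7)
The plan is to deduce $\lim_{t\to 0^+}\omega(t)=0$ first, and then to compare the series $\sum_n \omega(a^n t)$ with the integral $\int_0^{at}\omega(s)/s\,ds$ via the monotonicity of $\omega$, so that both (a) and (b) follow from the Dini integrability assumption.

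For the preliminary claim $\lim_{t\to 0^+}\omega(t)=0$, I would argue by contradiction: since $\omega$ is continuous and nondecreasing on $\RR_+$, the limit equals $\omega(0)\geq 0$. If $\omega(0)=c>0$, then $\omega(s)\geq c$ for all $s\in(0,\delta_{3,f}]$ and consequently $\int_0^{\delta_{3,f}}\omega(s)/s\,ds \geq c\int_0^{\delta_{3,f}}s^{-1}\,ds=+\infty$, contradicting (\ref{equation_commutator_condition}). Hence $\omega(0)=0$.

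For (a), fix $0<a<1$ and $t>0$. The key inequality is that, since $\omega$ is nondecreasing, for every $n\geq 1$,
\[
\omega(a^{n+1}t)\,\int_{a^{n+1}t}^{a^n t}\frac{ds}{s} \;\leq\; \int_{a^{n+1}t}^{a^n t}\frac{\omega(s)}{s}\,ds,
\]
so $(-\ln a)\,\omega(a^{n+1}t)\leq \int_{a^{n+1}t}^{a^n t}\omega(s)/s\,ds$. Summing telescopically gives
\[
(-\ln a)\sum_{n=1}^{\infty}\omega(a^{n+1}t) \;\leq\; \int_0^{at}\frac{\omega(s)}{s}\,ds.
\]
If $at\leq \delta_{3,f}$ the right-hand side is finite by Assumption \ref{Assumption_commutator}, and adding the single term $\omega(at)$ then bounds the full series. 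For $t$ large enough that $at>\delta_{3,f}$, choose the smallest $N$ with $a^N t\leq \delta_{3,f}$, apply the previous bound to the tail starting from index $N$, and note that the finitely many terms $\omega(a^n t)$, $1\leq n<N$, are each finite. This proves (a).

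For (b), I would use exactly the estimate
\[
\sum_{n=1}^{\infty}\omega(a^n t) \;=\; \omega(at)+\sum_{n=2}^{\infty}\omega(a^n t) \;\leq\; \omega(at)+\frac{1}{-\ln a}\int_0^{at}\frac{\omega(s)}{s}\,ds,
\]
valid for all $t$ small enough that $at\leq \delta_{3,f}$. Letting $t\to 0^+$, the first summand tends to zero by the preliminary claim, and the second tends to zero by absolute continuity of the finite integral $\int_0^{\delta_{3,f}}\omega(s)/s\,ds$ with respect to the endpoint. This yields (b). The only mild subtlety is the case in (a) where $t$ is not small, which I would handle simply by splitting the series at an index $N$ past which the integral comparison applies; there is no real analytic obstacle beyond the telescoping argument.
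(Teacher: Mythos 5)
Your proof is correct, and for the preliminary claim and part (a) it is essentially the paper's argument: both compare $\sum_n\omega(a^nt)$ with $\int_0^{at}\omega(s)s^{-1}\,ds$ by bounding $\omega$ from below by $\omega(a^{n+1}t)$ on each interval $[a^{n+1}t,a^nt]$ (the paper also bounds $1/s$ by $1/(a^nt)$, getting the constant $1-a$ where you get the sharper $-\ln a$; you are also more careful than the paper about the case $at>\delta_{3,f}$). The only genuine divergence is in part (b): the paper runs an $\varepsilon/2$-splitting argument, dominating the tail $\sum_{n\geq n_0}\omega(a^nt)$ by the convergent series at $t=1$ (using monotonicity and $t\leq 1$) and controlling the finitely many head terms via $\omega(s)\to 0$; you instead carry the quantitative bound $\sum_{n\geq 1}\omega(a^nt)\leq\omega(at)+(-\ln a)^{-1}\int_0^{at}\omega(s)s^{-1}\,ds$ all the way and let $t\to 0^+$, invoking continuity of the finite integral in its upper limit. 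Your route is slightly cleaner and has the added benefit of an explicit rate in terms of $\int_0^{at}\omega(s)s^{-1}\,ds$, which is in the spirit of the convergence-rate remark after Theorem \ref{main}; the paper's route avoids any appeal to absolute continuity of the integral but is purely qualitative.
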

\begin{proof}  (a) Suppose that $\inf_{0<t<1}\omega(t)=m>0$. Then by \ref{equation_commutator_condition} in Assumption \ref{Assumption_commutator} we get
	\[\int_0^1\frac{\omega(s)}{s}ds\geq \int_0^1\frac{m}{s}ds=+\infty.\]
	So $m=0$.\\
	 From the fact that $\int_0^\sigma\frac{\omega(t)}{t}dt<+\infty$ we have 
	\begin{equation}
	\nonumber
	\begin{array}{lllll}
	\infty&>&\sum_{n=0}^\infty \int_{a^{n+1}t}^{a^nt}\frac{\omega(s)}{s}ds\geq  \sum_{n=0}^\infty\frac{\omega(a^{n+1}t)}{a^nt}(a^nt-a^{n+1}t)=\\
	&=&\sum_{n=0}^\infty \omega(a^{n+1}t)\left[1-\frac{a^{n+1}t}{a^nt}\right]=(1-a)\sum_{n=1}^\infty \omega(a^{n}t)
	\end{array}
	\end{equation} 
	This proves $(a)$.\\
	For $(b)$ let $\eps>0$. According to (a) there exists $n_0\in\NN$ such that 
	\[\sum_{n=n_0}^\infty \omega(a^n)<\frac{\eps}{2}.\]
	Moreover, because $\lim_{t\to 0^+}\omega(t)=0$, there exists $t_0\leq 1$ such that $\omega(at_0)<\frac{\eps}{2n_0}$. Then for every $0<t\leq t_0$ and $n\in\NN$, $1\leq n\leq n_0$, $\omega(a^nt)\leq \omega(at_0)\leq \frac{\eps}{2n_0}$. So
	\[\sum_{n=1}^\infty \omega(a^nt)<\sum_{n=1}^{n_0-1}\omega(a^nt)+\sum_{n=n_0}^\infty\omega(a^nt)<\frac{\eps(n_0-1)}{2n_0}+\frac{\eps}{2}<\eps.\]
\end{proof}

\label{subsection technical lemmas} To show our main result we need technical lemmas which we present in this section. Proofs of results from this section can be found in Appendix \ref{appendix_proofs} .
\begin{lemma}
	\label{lem_est} 
	The following identities hold: for fixed $k\in\NN$, $m:=kn$ and $j\leq m$. 
	\begin{itemize}
		\item [(a)]
$	P^1_{\frac{t}{m}}P^2_\frac{jt}{m}-P^2_{\frac{jt}{m}}P^1_{\frac{t}{m}}=\sum_{l=0}^{j-1}P^2_{\frac{lt}{m}}\left(P^1_{\frac{t}{m}}P^2_{\frac{t}{m}}-P^2_{\frac{t}{m}}P^1_{\frac{t}{m}}\right)P^2_{\frac{(j-1-l)t}{m}}
$
		\item [(b)]$
		P^1_{\frac{kt}{m}}P^2_\frac{kt}{m}-\left(P^1_{\frac{t}{m}}P^2_{\frac{t}{m}}\right)^k=\sum_{j=1}^{k-1}P^1_{\frac{tj}{m}}\left(P^1_{\frac{t}{m}}P^2_\frac{jt}{m}-P^2_{\frac{jt}{m}}P^1_{\frac{t}{m}}\right)P^2_{\frac{t}{m}}\left(P^1_{\frac{t}{m}}P^2_{\frac{t}{m}}\right)^{k-1-j}
$
		\item [(c)] 	$
		\left(P^1_{\frac{t}{n}}P^2_\frac{t}{n}\right)^n-\left(P^1_{\frac{t}{m}}P^2_{\frac{t}{m}}\right)^{m}=\left(P^1_{\frac{kt}{m}}P^2_\frac{kt}{m}\right)^n-\left(P^1_{\frac{t}{m}}P^2_{\frac{t}{m}}\right)^{n\cdot k}=\\
		=\sum_{i=0}^{n-1}\left(P^1_{\frac{kt}{m}}P^2_{\frac{kt}{m}}\right)^i\left(P^1_{\frac{kt}{m}}P^2_{\frac{kt}{m}}-\left(\PP\PQ\right)^k\right)\left(\PP\PQ\right)^{k\cdot(n-1-i)}
$
	\end{itemize}
\end{lemma}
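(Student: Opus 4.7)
The plan is to observe that all three identities are purely algebraic consequences of the semigroup property together with standard telescoping for commutators of powers. Since the operators act linearly on $\MM(S)$, we may manipulate them as we would manipulate elements of a (noncommutative) associative algebra. For convenience, I would fix $k,n\in\NN$, set $m:=kn$, and abbreviate
\[
A := P^1_{t/m},\qquad B := P^2_{t/m},
\]
so that by the semigroup property $P^1_{jt/m}=A^j$ and $P^2_{jt/m}=B^j$ for every $j\in\NN$, and in particular $P^1_{kt/m}=A^k=P^1_{t/n}$, $P^2_{kt/m}=B^k=P^2_{t/n}$.

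\textbf{Proof of (a).} With the abbreviations above, the claim reduces to the identity
\[
AB^j-B^jA=\sum_{l=0}^{j-1}B^l(AB-BA)B^{j-1-l}.
\]
I would expand the right-hand side to $\sum_{l=0}^{j-1}\bigl(B^lAB^{j-l}-B^{l+1}AB^{j-1-l}\bigr)$, set $c_l:=B^lAB^{j-l}$, note that the $(l+1)$-st term is $c_{l+1}$, and read off a telescoping sum that collapses to $c_0-c_j=AB^j-B^jA$.

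\textbf{Proof of (b).} Using the same notation the identity becomes
\[
A^kB^k-(AB)^k=\sum_{j=1}^{k-1}A^j\bigl(AB^j-B^jA\bigr)B\,(AB)^{k-1-j}.
\]
I would expand each summand as
\[
A^j(AB^j-B^jA)B(AB)^{k-1-j}=A^{j+1}B^{j+1}(AB)^{k-1-j}-A^jB^j(AB)^{k-j},
\]
and set $d_j:=A^jB^j(AB)^{k-j}$; each summand is then $d_{j+1}-d_j$, so the sum telescopes to $d_k-d_1=A^kB^k-AB(AB)^{k-1}=A^kB^k-(AB)^k$, which is what is claimed.

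\textbf{Proof of (c).} The first equality is immediate from $kt/m=t/n$, which gives $P^1_{kt/m}=P^1_{t/n}$ and $P^2_{kt/m}=P^2_{t/n}$, so $\bigl(P^1_{t/n}P^2_{t/n}\bigr)^n=\bigl(P^1_{kt/m}P^2_{kt/m}\bigr)^n$, while $\bigl(P^1_{t/m}P^2_{t/m}\bigr)^m=\bigl(P^1_{t/m}P^2_{t/m}\bigr)^{nk}$. For the second equality, set $X:=P^1_{kt/m}P^2_{kt/m}$ and $Y:=\bigl(P^1_{t/m}P^2_{t/m}\bigr)^k$, and apply the elementary telescoping identity
\[
X^n-Y^n=\sum_{i=0}^{n-1}X^i(X-Y)Y^{n-1-i},
\]
which again is verified by collapsing $\sum_{i=0}^{n-1}\bigl(X^{i+1}Y^{n-1-i}-X^iY^{n-i}\bigr)$.

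There is no real obstacle here: everything follows from the semigroup property $P^i_{s+t}=P^i_sP^i_t$ and two standard telescoping tricks. The only thing to be careful about is bookkeeping of the indices, particularly in (b), where one has to recognize the right way of writing the summand so that a single telescoping variable $d_j$ emerges. Because all identities are purely algebraic, no topological, continuity or measure-theoretic hypothesis on the semigroups is needed at this stage; the estimates exploiting (a)--(c) together with the Commutator Condition will be carried out in the subsequent lemmas.
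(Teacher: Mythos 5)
Your proof is correct and takes essentially the same approach as the paper: both treat (a)--(c) as purely algebraic identities following from the semigroup property and linearity of the extended operators on $\MM(S)$. The only difference is presentational — the paper verifies each identity by induction on $j$, $k$ and $n$ respectively, whereas you collapse the corresponding telescoping sums directly, which amounts to the same argument.
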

Combining Lemma \ref{lem_est} (a)-(c) we get the following Corollary.
\begin{clry}
	\label{corollary_1}
	For any $n\in\NN$, $k\in\NN$ and $m:=kn$ one has
	\begin{equation}
	\nonumber
	\begin{array}{llll}
&\left(P^1_{\frac{t}{n}}P^2_{\frac{t}{n}}\right)^n-\left(P^1_{\frac{t}{m}}P^2_{\frac{t}{m}}\right)^m=\\
&	=\sum_{i=0}^{n-1}\sum_{j=1}^{k-1}\sum_{l=0}^{j-1}\left(P^1_{\frac{kt}{m}}P^2_{\frac{kt}{m}}\right)^iP^1_{\frac{jt}{m}}P^2_{\frac{lt}{m}}\left(P^1_{\frac{t}{m}}P^2_{\frac{t}{m}}-P^2_{\frac{t}{m}}P^1_{\frac{t}{m}}\right)P^2_{\frac{(j-l)t}{m}}\left(P^1_{\frac{t}{m}}P^2_{\frac{t}{m}}\right)^{k(n-i)-j-1}
	\end{array}
	\end{equation}
\end{clry}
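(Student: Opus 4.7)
The plan is to prove the corollary by chaining the three identities (a), (b), (c) of Lemma \ref{lem_est} in reverse order of depth, substituting each into the next.

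First I would start from identity (c), which already expresses the left-hand side as a telescoping sum in $i$ whose inner factor is the difference $P^1_{\frac{kt}{m}}P^2_{\frac{kt}{m}} - (P^1_{\frac{t}{m}}P^2_{\frac{t}{m}})^k$. I would then substitute identity (b) into this inner factor, turning that one difference into a sum over $j=1,\dots,k-1$ whose summands contain the commutator $P^1_{\frac{t}{m}}P^2_{\frac{jt}{m}}-P^2_{\frac{jt}{m}}P^1_{\frac{t}{m}}$. Finally I would substitute identity (a) into this commutator, obtaining the innermost sum over $l=0,\dots,j-1$ whose summands contain the elementary commutator $P^1_{\frac{t}{m}}P^2_{\frac{t}{m}}-P^2_{\frac{t}{m}}P^1_{\frac{t}{m}}$. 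At this point the triple sum on the right-hand side has appeared.

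The two bookkeeping steps that must be checked are the consolidation of the factors on the far right. After substituting (a), the tail $P^2_{\frac{(j-1-l)t}{m}}$ from (a) is followed by the $P^2_{\frac{t}{m}}$ coming from (b); the semigroup property of $(P^2_s)_{s\geq 0}$ combines them into $P^2_{\frac{(j-l)t}{m}}$, which is the factor appearing in the statement. After the subsequent substitution into (c), the factor $(P^1_{\frac{t}{m}}P^2_{\frac{t}{m}})^{k-1-j}$ coming from (b) is followed by $(P^1_{\frac{t}{m}}P^2_{\frac{t}{m}})^{k(n-1-i)}$ coming from (c); these multiply to $(P^1_{\frac{t}{m}}P^2_{\frac{t}{m}})^{k(n-i)-j-1}$ by the exponent identity
\[
(k-1-j) + k(n-1-i) = k(n-i) - j - 1,
\]
which is exactly the exponent that appears on the right-hand side of the corollary.

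Since every intermediate factor that gets merged is a power of a single $P^2_s$ or of the composite $P^1_{\frac{t}{m}}P^2_{\frac{t}{m}}$, both mergers are unconditional applications of the semigroup property, and no estimate or limiting argument is required. Thus the only real work is careful indexing; there is no genuine obstacle since the three identities of Lemma \ref{lem_est} are designed precisely so that their compositions telescope. The proof therefore reduces to writing out the three substitutions in order and verifying the exponent identity displayed above.
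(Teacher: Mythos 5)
Your proposal is correct and is exactly the argument the paper intends: the corollary is obtained by substituting Lemma \ref{lem_est}(b) into the middle factor of (c) and then (a) into the resulting commutator, with the two mergers $P^2_{\frac{(j-1-l)t}{m}}P^2_{\frac{t}{m}}=P^2_{\frac{(j-l)t}{m}}$ and $(k-1-j)+k(n-1-i)=k(n-i)-j-1$ handled by the semigroup property. The exponent bookkeeping you display checks out, so nothing is missing.
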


	\begin{lemma}
		\label{lem_2 new}
		Let $f\in \BL(S,d)$ and $\mu_0\in M_0$.
		Assume that Assumptions \ref{Assumption_equicontinuity_tightness}-\ref{Assumption_4_extended_commutator} hold and put $\delta_f=\min(\delta_1,\delta_2,\delta_{3,f},\delta_{4,f})$. Then for all $t\geq 0$ and $n,k\in\NN$ such that $\frac{t}{nk}\in [0,\delta_f]$: 
			\[\left|\left\langle \left[P^1_{\frac{t}{n}}P^2_\frac{t}{n}\right]^n\mu_0-\left[P^1_{\frac{t}{kn}}P^2_{\frac{t}{kn}}\right]^{n\cdot k}\mu_0,f\right\rangle\right|\leq  C_f(\mu_0)\frac{k-1}{2}t\omega_f\left(\frac{t}{nk},\mu_0\right)\]
	\end{lemma}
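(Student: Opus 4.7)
The plan is to combine Corollary \ref{corollary_1} with the two commutator assumptions. Writing $m := nk$ and $s := t/m$, Corollary \ref{corollary_1} expresses the difference in the statement as
\[
\bigl(P^1_{t/n} P^2_{t/n}\bigr)^n - \bigl(P^1_s P^2_s\bigr)^m \;=\; \sum_{i=0}^{n-1}\sum_{j=1}^{k-1}\sum_{l=0}^{j-1} A_{i,j,l}\, C_s\, B_{i,j,l},
\]
where $C_s := P^1_s P^2_s - P^2_s P^1_s$ is the bare commutator, $A_{i,j,l} := (P^1_{ks} P^2_{ks})^{i} P^1_{js} P^2_{ls}$, and $B_{i,j,l} := P^2_{(j-l)s}(P^1_s P^2_s)^{k(n-i)-j-1}$. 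Applying both sides to $\mu_0$, pairing with $f$, and pushing the outer block $A_{i,j,l}$ onto $f$ by duality, each summand becomes $\langle C_s\,\eta_{i,j,l},\, g_{i,j,l}\rangle$ with
\[
\eta_{i,j,l} \;:=\; B_{i,j,l}\,\mu_0, \qquad g_{i,j,l} \;:=\; U^2_{ls}\, U^1_{js}\, \bigl(U^2_{ks} U^1_{ks}\bigr)^{i}\, f.
\]

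The first substantive step is to recognize that $g_{i,j,l} \in \mathcal{E}(f)$: matching the template (\ref{def_E}) with outer parameters $ls,\, js$ and an inner $i$-fold power of $U^2_{ks}U^1_{ks}$ (i.e.\ $n'=i$, $t'=iks$) fits the definition, since the hypothesis $s = t/(nk) \leq \delta_f$ keeps every time argument in the permitted range. By definition (\ref{new_metric}) of $d_{\mathcal{E}(f)}$ this forces $|g_{i,j,l}|_{L, d_{\mathcal{E}(f)}} \leq 1$, so $g_{i,j,l}$ is (up to a bounded factor depending only on $\|f\|_\infty$, which is absorbed into the $f$-dependent modulus $\omega_f$) a valid test function for $\|\cdot\|^*_{\BL, d_{\mathcal{E}(f)}}$. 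Because $M_0$ is invariant under each $P^i_\tau$, we also have $\eta_{i,j,l} \in M_0$, and Assumption \ref{Assumption_commutator} yields
\[
\bigl|\langle C_s\, \eta_{i,j,l},\, g_{i,j,l}\rangle\bigr| \;\leq\; \bigl\|C_s\,\eta_{i,j,l}\bigr\|^*_{\BL, d_{\mathcal{E}(f)}} \;\leq\; s\, \omega_f(s,\, \eta_{i,j,l}).
\]

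Next, since $B_{i,j,l}$ has the shape $P^2_\cdot \cdot F \cdot P^1_0$ with $F \in \mathcal{F}(\delta_{4,f})$, the Extended Commutator Condition (Assumption \ref{Assumption_4_extended_commutator}) gives $\omega_f(s, \eta_{i,j,l}) \leq C_f(\mu_0)\, \omega_f(s, \mu_0)$, uniformly in $(i,j,l)$. Counting the triples, $\sum_{i=0}^{n-1}\sum_{j=1}^{k-1} j = n\cdot k(k-1)/2$, and recalling $s = t/(nk)$, summing the pointwise bound produces
\[
\bigl|\bigl\langle \bigl(P^1_{t/n}P^2_{t/n}\bigr)^n\mu_0 - \bigl(P^1_s P^2_s\bigr)^m\mu_0,\, f\bigr\rangle\bigr| \;\leq\; \frac{nk(k-1)}{2}\cdot s \cdot C_f(\mu_0)\, \omega_f(s,\mu_0) \;=\; C_f(\mu_0)\,\frac{k-1}{2}\, t\, \omega_f\!\bigl(\tfrac{t}{nk},\mu_0\bigr),
\]
which is the asserted inequality.

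The main obstacle I anticipate is the bookkeeping of parameter ranges: verifying that for every $(i,j,l)$ the test function $g_{i,j,l}$ really lies in $\mathcal{E}(f)$ (so that $d_{\mathcal{E}(f)}$ actually controls it) and that $\eta_{i,j,l}$ really lies in the orbit $\mathcal{P}^2(\delta_{4,f})\cdot\mathcal{F}(\delta_{4,f})\cdot\mathcal{P}^1(\delta_{4,f})\,\mu_0$ required by Assumption \ref{Assumption_4_extended_commutator}; all intermediate time steps are bounded above by $ks = t/n$, so the standing hypothesis $s \leq \delta_f$ suffices once $\delta_f$ is chosen to accommodate the chosen window for $\mathcal{E}(f)$ and $\mathcal{F}(\delta_{4,f})$. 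Once those inclusions are secured, the remaining argument reduces to linearity of the pairing, the triangle inequality, and the combinatorial count above.
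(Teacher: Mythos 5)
Your proposal is correct and follows essentially the same route as the paper's proof: the triple-sum decomposition of Corollary \ref{corollary_1} (equivalently, the sequential application of Lemma \ref{lem_est} (a)--(c)), duality to move the outer operator block onto $f$ so that the resulting test functions lie in $\mathcal{E}(f)$ and are controlled by $\|\cdot\|_{\BL,d_{\mathcal{E}(f)}}$, Assumption \ref{Assumption_commutator} applied to the measures $\eta_{i,j,l}\in M_0$, Assumption \ref{Assumption_4_extended_commutator} to reduce $\omega_f(s,\eta_{i,j,l})$ to $\omega_f(s,\mu_0)$, and the count $n k(k-1)/2$ of summands. Your explicit remark about absorbing the $\|f\|_\infty$ contribution to the $\BL$-norm of the test functions is a point the paper glosses over, but it does not change the argument.
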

We can now finally get to the proof of our main result Theorem \ref{main}, i.e
the convergence of the Lie-Trotter product formula for Markov operators. We need the lemma that yields the convergence of the subsequence of the form 	$\left\langle\left[P^1_{\frac{t}{2^n}}P^1_{\frac{t}{2^n}}\right]^{2^n}\mu_0,f\right\rangle$ for $\mu_0\in M_0$ and for every  $f\in\BL(S,d)$. Then, using this result, we will show that the sequence 	$\left\langle\left[P^1_{\frac{t}{n}}P^1_{\frac{t}{n}}\right]^{n}\mu_0,f\right\rangle$ also converges for every $f\in\BL(S,d)$. From that we can extend from $\mu_0\in M_0$ to $\mu\in\MM^+(S)$. Recall that $\delta_f:=\min(\delta_1,\delta_2,\delta_{3,f},\delta_{4,f})$. 
\begin{remark}
	The "weak" convergence in our setting is a convergence of a sequence of measures paired with a bounded Lipschitz function. Hence it differs from the "standard" definition of weak convergence (see \cite{bogachev_2} Definition 8.1.1), where the sequence of measures is paired with continuous bounded functions. However, since $\mathrm{\BL}(S,d)\simeq \MM(S)^*_{\BL}$ (see \cite{Hille2009_embedding}, Theorem 3.7) our terminology is proper from a functional analytical perspective. 
\end{remark}
\begin{lemma}
	\label{lemma subsequence}
	Let $(P^1_t)_{t\geq 0}$ and $(P^2_t)_{t\geq 0}$ be Markov semigroups such that Assumptions \ref{Assumption_equicontinuity_tightness}-\ref{Assumption_4_extended_commutator} hold.  Let $\mu_0\in M_0$ and $f\in \mathrm{\BL}(S,d)$. Then 
	the sequence $(r_n)_{n\in \NN}$ where $r_n:=\left\langle\left[P^1_{\frac{t}{2^n}}P^1_{\frac{t}{2^n}}\right]^{2^n}\mu_0,f\right\rangle$ converges for every $t\geq 0$, uniformly for $t$ in compact subsets of $\RR_+$.
\end{lemma}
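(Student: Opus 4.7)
The plan is to show directly that $(r_n)_{n\in\NN}$ is Cauchy in $\RR$, using Lemma \ref{lem_2 new} with $k=2$ to control consecutive differences, and Lemma \ref{lemma_omega} to sum up the resulting Dini-type tails. Then convergence follows from completeness of $\RR$. Throughout, note that the statement of $r_n$ contains an obvious typo and I read it as $r_n=\bigl\langle[P^1_{t/2^n}P^2_{t/2^n}]^{2^n}\mu_0,f\bigr\rangle$.

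First I fix $t\geq 0$, $\mu_0\in M_0$, $f\in\BL(S,d)$, and choose $N_0\in\NN$ large enough that $t/2^{N_0+1}\leq \delta_f$, where $\delta_f=\min(\delta_1,\delta_2,\delta_{3,f},\delta_{4,f})$. For every integer $j\geq N_0$, apply Lemma \ref{lem_2 new} with the parameter called $n$ in that lemma set to $2^j$ and $k=2$, so that the product $nk$ equals $2^{j+1}$ and the hypothesis $t/(nk)\in[0,\delta_f]$ holds. This yields
\[
|r_j-r_{j+1}|=\left|\left\langle\bigl[P^1_{t/2^j}P^2_{t/2^j}\bigr]^{2^j}\mu_0-\bigl[P^1_{t/2^{j+1}}P^2_{t/2^{j+1}}\bigr]^{2^{j+1}}\mu_0,\,f\right\rangle\right|\;\leq\;\frac{C_f(\mu_0)\,t}{2}\,\omega_f\!\left(\frac{t}{2^{j+1}},\mu_0\right).
\]

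Second, I telescope: for any $N_0\leq n<m$,
\[
|r_n-r_m|\;\leq\;\sum_{j=n}^{m-1}|r_j-r_{j+1}|\;\leq\;\frac{C_f(\mu_0)\,t}{2}\,\sum_{j=n}^{\infty}\omega_f\!\left(\frac{t}{2^{j+1}},\mu_0\right).
\]
Applying Lemma \ref{lemma_omega}(a) to $\omega(\cdot)=\omega_f(\cdot,\mu_0)$ with $a=1/2$, the series $\sum_{j\geq 1}\omega_f(t/2^j,\mu_0)$ is finite, so its tail vanishes as $n\to\infty$. Therefore $(r_n)$ is a Cauchy sequence in $\RR$ and converges.

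For the uniform convergence on a compact set $\Gamma\subset\RR_+$, set $T:=\sup\Gamma$ and choose $N_0$ so that $T/2^{N_0+1}\leq\delta_f$. Since $\omega_f(\cdot,\mu_0)$ is non-decreasing in its first argument, the estimate above is majorized, for every $t\in\Gamma$ and every $n,m\geq N_0$, by
\[
|r_n-r_m|\;\leq\;\frac{C_f(\mu_0)\,T}{2}\,\sum_{j=n}^{\infty}\omega_f\!\left(\frac{T}{2^{j+1}},\mu_0\right),
\]
which is independent of $t\in\Gamma$ and tends to $0$ as $n\to\infty$ (again by Lemma \ref{lemma_omega}(a)). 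This delivers uniform convergence on compact subsets of $\RR_+$.

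The non-trivial work is already packaged in Lemmas \ref{lem_est}--\ref{lem_2 new} (the combinatorial expansion combined with the commutator Assumptions \ref{Assumption_commutator}--\ref{Assumption_4_extended_commutator}) and in Lemma \ref{lemma_omega} (the Dini-type summability along dyadic scales). Given these, the only delicate point here is uniformity in $t$: one must be careful that the range of indices $j$ for which Lemma \ref{lem_2 new} legitimately applies is uniform over $\Gamma$, which is ensured by choosing $N_0$ from $T=\sup\Gamma$. I expect this bookkeeping of $\delta_f$ and the monotone domination of $\omega_f$ to be the only substantive step.
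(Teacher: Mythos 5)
Your proof is correct and follows essentially the same route as the paper: both telescope over consecutive dyadic levels, bound each difference via Lemma \ref{lem_2 new} with $k=2$, and conclude Cauchyness from the summability in Lemma \ref{lemma_omega}(a). Your treatment of uniformity (dominating $\omega_f(t/2^{j+1},\mu_0)$ by $\omega_f(T/2^{j+1},\mu_0)$ with $T=\sup\Gamma$ via monotonicity) is a minor, and if anything cleaner, variant of the paper's appeal to Lemma \ref{lemma_omega}(b).
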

\begin{proof} The case $t=0$ is trivial. So fix $t>0$. 
Let $f\in\mathrm{\BL}(S,d)$  There exists $N\in\NN$ such that $\frac{t}{2^N}\in[0,\delta_f]$. Let $i,j\in\NN, i>j\geq N$. Then $2^i=2^j\cdot 2^l$ with $l=i-j<i$. Lemma \ref{lem_2 new} yields for any $\mu_0\in M_0$, that
	\begin{equation}
	\begin{aligned}
	&\left|\left\langle\left(\left[P^1_{\frac{t}{2^i}}P^2_{\frac{t}{2^i}}\right]^{2^i}-\left[P^1_{\frac{t}{2^j}}P^2_{\frac{t}{2^j}}\right]^{2^j}\right)\mu_0, f\right\rangle\right|\\
	\leq& \sum_{l=j}^{i-1} \left|\left\langle\left(\left[P^1_{\frac{t}{2^l}}P^2_{\frac{t}{2^l}}\right]^{2^l}-\left[P^1_{\frac{t}{2^{l+1}}}P^2_{\frac{t}{2^{l+1}}}\right]^{2^{l+1}}\right)\mu_0, f\right\rangle\right|\\
	\leq& C_f(\mu_0)\frac{t}{2}\sum_{l=j}^{i-1}\omega_f\left(\frac{t}{2^{l+1}},\mu_0\right),
	\end{aligned}
	\end{equation}	
	
		with $\omega_f$ as in Assumption \ref{Assumption_commutator}. According to Lemma \ref{lemma_omega} (a),
	$\sum_{l=0}^\infty\omega_f\left(\frac{t}{2^{l+1}},\mu_0\right)<+\infty.$
So for every $\eps>0$ there exists $N'\in\NN, N'\geq N$ such that 
	$\sum_{l=j}^{i-1}\omega_f\left(\frac{t}{2^{l+1}},\mu_0\right)<\eps$
	for every $i,j\geq N$. Also, by property b) in Lemma \ref{lemma_omega}, $\omega_f\left(\frac{t}{2^{l+1}},\mu_0\right)$ can be made uniformly small, when $t$ is in a compact subset of $\RR_+$.  
Hence the sequence $(r_n)_{n\in\NN}$ is Cauchy in $\RR$, hence convergent. 
\end{proof} 
Observe that a measure $\mu\in\MM^+(S)$ is uniquely defined by its values on $f\in \BL(S,d)$. Lemma \ref{lemma subsequence} and the Banach-Steinhaus Theorem (see \cite{bogachev_1}, Theorem 4.4.3) allow us to define a positively homogeneous map $\mathbb{P}_t:M_0\to \BL(S,d)^*$ by means of
\[\label{P} \left\langle\mathbb{P}_t\mu_0,f\right\rangle:=\lim_{n\to\infty}\left\langle\left[ P_{\frac{t}{2^n}}^1P_{\frac{t}{2^n}}^2\right]^{2^n}\mu_0,f\right\rangle\]
However, according to Theorem \ref{weak_implies_strong}, $\mathbb{P}_t\mu_0\in\MM^+(S)$ for every $\mu_0\in M_0$ and 
\begin{equation}
\label{def_P}\left[P^1_\frac{t}{2^n}P^2_\frac{t}{2^n}\right]^{2^n}\mu_0\to \mathbb{P}_t\mu_0
\end{equation} strongly, in $\|\cdot\|_{\BL,d}^*$-norm. 
\begin{prop}
	\label{main theorem new}
	Let $(P^1_t)_{t\geq 0}$ and $(P^2_t)_{t\geq 0}$ be Markov semigroups such that Assumptions \ref{Assumption_equicontinuity_tightness}-\ref{Assumption_4_extended_commutator} hold. If $\mu_0\in M_0$, then for every $f\in \BL(S,d)$ and for all $t\geq 0$, 
	$\left\langle\left[P^1_{\frac{t}{n}}P^2_{\frac{t}{n}}\right]^{n}\mu_0,f\right\rangle$ converges to  $\left\langle\mathbb{P}_t\mu_0,f\right\rangle$.
\end{prop}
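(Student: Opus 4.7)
\textbf{Proof plan for Proposition \ref{main theorem new}.}

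Write $A_n(t) := \left\langle [P^1_{t/n}P^2_{t/n}]^n \mu_0, f\right\rangle$ and $L(t) := \langle \mathbb{P}_t \mu_0, f\rangle$. The case $t = 0$ is trivial, so fix $t > 0$. By Lemma \ref{lemma subsequence} and the definition of $\mathbb{P}_t$ in \eqref{def_P} we already know $A_{2^m}(t) \to L(t)$ as $m \to \infty$. The plan is a two-stage comparison: first, sandwich $A_n(t)$ against a dyadic refinement $A_{n \cdot 2^l}(t)$ using Lemma \ref{lem_2 new} with step ratio $k = 2$ and telescoping, and second, compare the refined sequence against $A_{2^m}(t)$ using Lemma \ref{lem_2 new} in the other direction to identify the limit.

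Restrict attention to $n$ large enough that $t/(2n) \leq \delta_f$. Applying Lemma \ref{lem_2 new} with the substitution $n \to n \cdot 2^l$, $k \to 2$ at every level and telescoping yields
\begin{equation*}
|A_n(t) - A_{n \cdot 2^L}(t)| \;\leq\; \sum_{l=0}^{L-1} |A_{n \cdot 2^l}(t) - A_{n \cdot 2^{l+1}}(t)| \;\leq\; \frac{C_f(\mu_0)\, t}{2} \sum_{l=0}^{L-1} \omega_f\!\left(\frac{t}{n \cdot 2^{l+1}}, \mu_0\right).
\end{equation*}
By Lemma \ref{lemma_omega}(a) with $a = 1/2$ the series on the right is summable as $L \to \infty$, so $(A_{n \cdot 2^l}(t))_l$ is Cauchy in $\RR$ and admits a limit, call it $L_n(t)$. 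Passing $L \to \infty$ in the estimate above gives
\begin{equation*}
|A_n(t) - L_n(t)| \;\leq\; \frac{C_f(\mu_0)\, t}{2} \sum_{l=1}^{\infty} \omega_f\!\left(\tfrac{1}{2^l} \cdot \tfrac{t}{n}, \mu_0\right),
\end{equation*}
and by Lemma \ref{lemma_omega}(b) (with $a = 1/2$ and the ``$t$'' there equal to $t/n$) the right-hand side tends to $0$ as $n \to \infty$; inspection of that proof shows the decay is uniform for $t$ in compact subsets of $\RR_+$.

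It remains to identify $L_n(t) = L(t)$. Apply Lemma \ref{lem_2 new} in the reverse role, with $n \to 2^m$ and $k \to n$, which is legitimate for all $m$ large enough that $t/(n \cdot 2^m) \leq \delta_f$:
\begin{equation*}
|A_{2^m}(t) - A_{n \cdot 2^m}(t)| \;\leq\; C_f(\mu_0) \,\frac{n-1}{2}\, t \, \omega_f\!\left(\frac{t}{n \cdot 2^m}, \mu_0\right).
\end{equation*}
For fixed $n$, continuity of $\omega_f(\cdot, \mu_0)$ together with $\omega_f(s, \mu_0) \to 0$ as $s \to 0^+$ (Lemma \ref{lemma_omega}) forces the right-hand side to zero as $m \to \infty$. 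Since $A_{2^m}(t) \to L(t)$ by Lemma \ref{lemma subsequence} and $A_{n \cdot 2^m}(t) \to L_n(t)$ by construction, the comparison gives $L_n(t) = L(t)$. Combining with the first step yields $A_n(t) \to L(t)$ as $n \to \infty$, uniformly for $t$ in compacts.

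The main obstacle is that the naive bound in Lemma \ref{lem_2 new} carries a prefactor $(k-1)/2$ that grows with the refinement ratio, so no single application of it can be used to drive the error to zero by letting $k \to \infty$. The decisive move is to split the refinement into a geometric chain of doubling steps ($k = 2$ at each level), converting the divergent-looking prefactor into a partial sum of the Dini-type series $\sum_l \omega_f((1/2)^l (t/n), \mu_0)$, whose tail vanishes thanks to Lemma \ref{lemma_omega}. A secondary technicality is that the restriction $t/(nk) \leq \delta_f$ in Lemma \ref{lem_2 new} forces one to start the telescope only once $n$ is sufficiently large relative to $t$, which is harmless since the claim concerns the behaviour as $n \to \infty$.
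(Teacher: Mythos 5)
Your proposal is correct and follows essentially the same route as the paper: both rest on the same two applications of Lemma \ref{lem_2 new} (a telescoping chain of doubling steps with $k=2$, plus a single comparison between the $2^m$-scheme and the $n\cdot 2^m$-scheme with $k=n$) together with Lemma \ref{lemma_omega}. The only difference is bookkeeping — you first pass $l\to\infty$ to define the intermediate limit $L_n(t)$ and then identify it with $L(t)$, whereas the paper runs a three-term $\varepsilon/3$ estimate with a level $l_n$ chosen in terms of $n$; the substance is identical.
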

\begin{proof} Let  $f\in \BL(S)$, $t\geq 0$ and fix $\eps>0$. Put $\delta_f=\min(\delta_1,\delta_2,\delta_{3,f},\delta_{4,f})$. For any $l\in\NN$, using Lemma \ref{lem_2 new}, one has	
	\begin{equation}
\nonumber
\label{eq_sequence_1}
\begin{array}{llll}
&	\left|\left\langle \left[P^1_{\frac{t}{n}}P^2_{\frac{t}{n}}\right]^{n}\mu_0-\mathbb{P}_t\mu_0,f\right\rangle\right|	&\leq& \left|\left\langle \left[P^1_{\frac{t}{n}}P^2_{\frac{t}{n}}\right]^{n}\mu_0-\left[P^1_{\frac{t}{n2^l}}P^1_{\frac{t}{n2^l}}\right]^{n2^l}\mu,f\right\rangle\right|\\
&&+&\left|\left\langle\left[P^1_{\frac{t}{n2^l}}P^1_{\frac{t}{n2^l}}\right]^{n2^l}\mu_0-\left[P^1_{\frac{t}{2^l}}P^1_{\frac{t}{2^l}}\right]^{2^l}\mu_0,f\right\rangle\right|\\
&&+&\left|\left\langle\left[P^1_{\frac{t}{2^l}}P^2_{\frac{t}{2^l}}\right]^{2^l}\mu_0-\mathbb{P}_t\mu_0,f\right\rangle\right|\\
\end{array}
\end{equation}	
Pick $N$ such that for $n\geq N$ one has $\frac{t}{n}\in [0,\delta_f]$. Then 
	\begin{equation}
	\nonumber
	\label{eq_sequence_2}
\begin{array}{llll}
&	\left|\left\langle \left[P^1_{\frac{t}{n}}P^2_{\frac{t}{n}}\right]^{n}\mu_0-\mathbb{P}_t\mu_0,f\right\rangle\right|	&\leq & \sum_{i=0}^{l-1}\left|\left\langle \left[P^1_{\frac{t}{2^in}}P^2_{\frac{t}{2^in}}\right]^{2^in}\mu_0-\left[P^1_{\frac{t}{2^{i+1}n}}P^2_{\frac{t}{2^{i+1}n}}\right]^{2^{i+1}n}\mu_0,f\right\rangle\right|\\
&&&+C_f(\mu_0)\frac{n-1}{2}t\omega_f\left(\frac{t}{n2^l},\mu_0\right)\\
&&&+\left|\left\langle\left[P^1_{\frac{t}{2^l}}P^2_{\frac{t}{2^l}}\right]^{2^l}\mu_0-\mathbb{P}_t\mu_0,f\right\rangle\right|\\
&&	\leq& \sum_{i=0}^{l-1}C_f(\mu_0)\frac{1}{2}t\omega_f\left(\frac{t}{2^in},\mu_0\right)	+C_f(\mu_0)\frac{n-1}{2}t\omega_f\left(\frac{t}{n2^l},\mu_0\right)\\
&&&+\left|\left\langle\left[P^1_{\frac{t}{2^l}}P^2_{\frac{t}{2^l}}\right]^{2^l}\mu_0-\mathbb{P}_t\mu_0,f\right\rangle\right|\\
&&=&\frac{1}{2}C_f(\mu_0)t\left[\sum_{i=0}^l\omega_f\left(\frac{t}{2^in},\mu_0\right)+(n-1)\omega_f\left(\frac{t}{n2^l},\mu_0\right)\right] \\
&	&&+\left|\left\langle\left[P^1_{\frac{t}{2^l}}P^2_{\frac{t}{2^l}}\right]^{2^l}\mu_0-\mathbb{P}_t\mu_0,f\right\rangle\right|
\end{array}
\end{equation}	

	According to Proposition \ref{main theorem new} there exists $N_0$ such that for any $l\geq N_0$
\[\left|\left\langle\left[P^1_{\frac{t}{2^l}}P^2_{\frac{t}{2^l}}\right]^{2^l}\mu_0-\mathbb{P}_t\mu_0,f\right\rangle\right|<\frac{\eps}{3}. \]
Lemma \ref{lemma_omega} (b) yields $N_1\in \NN$, $N_1\geq N$ such that for every $n\geq N_1$ and $l\in\NN$,
 \[\sum_{i=0}^l\omega_f\left(\frac{t}{2^in},\mu_0\right)\leq \sum_{i=0}^\infty\omega_f\left(\frac{t}{2^in},\mu_0\right)<\left(1+\frac{1}{2}C_f(\mu_0)t\right)^{-1}\frac{\eps}{3}.\] 
 Since $\omega_f(s,\mu_0)\downarrow 0$ as $s\downarrow 0$, for every $n\geq N_1$, there exists $l_n\geq N_0$ such that
 \[\omega_f\left(\frac{t}{n2^{l_n}},\mu_0\right)<\frac{1}{n-1}\left(1+\frac{1}{2}tC_f(\mu_0)\right)^{-1}\frac{\eps}{3}.\]
So by choosing $l=l_n$ in the above derivation, we get that
 \[\left|\left\langle \left[P^1_{\frac{t}{n}}P^2_{\frac{t}{n}}\right]^{n}\mu_0-\mathbb{P}_t\mu_0,f\right\rangle\right|<\eps\text{ for every } n\geq N_1.\]
\end{proof}
Next lemma shows that once convergence of $\left\langle\left[P_{\frac{t}{n}}^1P_{\frac{t}{n}}^2\right]^n\mu_0,f\right\rangle$ is established for $\mu_0\in M_0$ then we have convergence for all $\mu\in\MM^+(S)$. 
\begin{lemma}
	\label{lemma_Cauchy_seq}
 Assume that Assumptions \ref{Assumption_equicontinuity_tightness}-\ref{Assumption_4_extended_commutator} hold. Then for every $\mu\in \MM^+(S)$ and $t\geq 0$,  $\left(\left[P^1_{\frac{t}{n}}P^2_{\frac{t}{n}}\right]^{n}\mu\right)_{n\in\NN}$ is a Cauchy sequence in $\mu\in\MM^+(S)$ for $\|\cdot\|_{\BL,d}^*$.\\
\end{lemma}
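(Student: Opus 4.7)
My plan is to first establish the Cauchy property on the dense subcone $M_0$, then transfer it to arbitrary $\mu \in \MM^+(S)$ via the eventual equicontinuity from Lemma \ref{lemma_eventually_equicontinuous}.

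First, fix $\mu_0 \in M_0$ and $t \geq 0$. By Proposition \ref{main theorem new}, for every $f \in \BL(S,d)$ the sequence $\langle [P^1_{t/n}P^2_{t/n}]^n \mu_0, f\rangle$ converges to $\langle \mathbb{P}_t \mu_0, f\rangle$. Because each $P^i_s$ is a Markov operator and the measures are positive, $\|[P^1_{t/n}P^2_{t/n}]^n \mu_0\|_{TV} = \|\mu_0\|_{TV}$ for every $n$, so the total variation norms are uniformly bounded. Theorem \ref{weak_implies_strong} then upgrades the weak convergence to $\|\cdot\|_{\BL,d}^*$-norm convergence, and in particular the sequence is Cauchy in $(\MM^+(S), \|\cdot\|_{\BL,d}^*)$.

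Now fix an arbitrary $\mu \in \MM^+(S)$, $t \geq 0$, and $\eps > 0$. Applying Lemma \ref{lemma_eventually_equicontinuous} to the compact set $\Gamma = \{t\}$ yields some $N \in \NN$ such that the family
\[
\CF_t^N := \left\{\left[P^1_{t/n}P^2_{t/n}\right]^n : n \geq N\right\}
\]
is equicontinuous on $\MM^+(S)_{\BL,d}$. In particular, at the point $\mu$ there exists $\eta > 0$ such that $\|\nu - \mu\|_{\BL,d}^* < \eta$ implies
\[
\left\|\left[P^1_{t/n}P^2_{t/n}\right]^n \mu - \left[P^1_{t/n}P^2_{t/n}\right]^n \nu \right\|_{\BL,d}^* < \eps
\quad \text{for every } n \geq N.
\]
By density of $M_0$ in $\MM^+(S)_{\BL,d}$, we may pick $\mu_0 \in M_0$ with $\|\mu - \mu_0\|_{\BL,d}^* < \eta$. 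By the first step, the sequence $([P^1_{t/n}P^2_{t/n}]^n \mu_0)_{n\in\NN}$ is Cauchy, so there is $N' \geq N$ with
\[
\left\|\left[P^1_{t/n}P^2_{t/n}\right]^n \mu_0 - \left[P^1_{t/m}P^2_{t/m}\right]^m \mu_0 \right\|_{\BL,d}^* < \eps
\]
for all $m, n \geq N'$. The triangle inequality then gives
\[
\left\|\left[P^1_{t/n}P^2_{t/n}\right]^n \mu - \left[P^1_{t/m}P^2_{t/m}\right]^m \mu \right\|_{\BL,d}^* < 3\eps
\quad \text{for all } m,n \geq N',
\]
so $([P^1_{t/n}P^2_{t/n}]^n \mu)_{n\in\NN}$ is Cauchy in $(\MM^+(S), \|\cdot\|_{\BL,d}^*)$.

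The main obstacle is the non-uniformity of the equicontinuity assumption: it is pointwise at each $\mu$, not uniform on $\MM^+(S)$. However, this is exactly sufficient, since we only need the estimate at the single measure $\mu$ under consideration. The interplay of the three ingredients---pointwise weak convergence on $M_0$ from Proposition \ref{main theorem new}, the Schur-like upgrade from Theorem \ref{weak_implies_strong}, and the eventual equicontinuity from Lemma \ref{lemma_eventually_equicontinuous}---is the reason the result extends from $M_0$ to all of $\MM^+(S)$ despite the absence of uniform boundedness of the operators $[P^1_{t/n}P^2_{t/n}]^n$ on $(\MM(S), \|\cdot\|_{\BL,d}^*)$.
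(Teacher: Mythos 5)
Your proof is correct and follows essentially the same route as the paper's: a three-term triangle inequality in which the two outer terms are controlled by equicontinuity of the Lie--Trotter products together with density of $M_0$, and the middle term by Proposition \ref{main theorem new} upgraded to norm convergence via Theorem \ref{weak_implies_strong}. The only (harmless) difference is that you invoke the eventual equicontinuity of Lemma \ref{lemma_eventually_equicontinuous} where the paper cites Assumption \ref{Assumption_stability} directly, which in fact handles the case $t>\delta_2$ a bit more explicitly.
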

\begin{proof} Let $\mu\in\MM^+(S)$. Let $\epsilon>0$. By Assumption \ref{Assumption_stability}, $\mathcal{F}(\delta)$ is an equicontinuous family. Thus there exists $\delta_\epsilon>0$ such that 
	\[\left\|\left[P^1_{\frac{t}{n}}P^2_{\frac{t}{n}}\right]^{n}\mu-\left[P^1_{\frac{t}{n}}P^2_{\frac{t}{n}}\right]^{n}\nu\right\|_{\BL,d}^*< \epsilon/3\] for all $\nu\in\MM^+(S)$ such that $\|\mu-\nu\|_{\BL,d}^*<\delta_\epsilon$. As $M_0\subset\MM^+(S)$ dense, there exists $\mu_0\in M_0$ such that $||\mu-\mu_0\|_{\BL,d}^*<\delta_\epsilon$. Then 
\begin{eqnarray}
\label{eq_equicontinuity}
\begin{array}{llll}
&\left\|\left[P^1_{\frac{t}{n}}P^2_{\frac{t}{n}}\right]^{n}\mu-\left[P^1_{\frac{t}{m}}P^2_{\frac{t}{m}}\right]^{m}\mu\right\|_{\BL,d}^*&\leq& \left\|\left[P^1_{\frac{t}{n}}P^2_{\frac{t}{n}}\right]^{n}\mu-\left[P^1_{\frac{t}{n}}P^2_{\frac{t}{n}}\right]^{n}\mu_0\right\|_{\BL,d}^*\\
&&+&\left\|\left[P^1_{\frac{t}{n}}P^2_{\frac{t}{n}}\right]^{n}\mu_0-\left[P^1_{\frac{t}{m}}P^2_{\frac{t}{m}}\right]^{m}\mu_0\right\|_{\BL,d}^*\\
&&+&\left\|\left[P^1_{\frac{t}{m}}P^2_{\frac{t}{m}}\right]^{m}\mu_0-\left[P^1_{\frac{t}{m}}P^2_{\frac{t}{m}}\right]^{m}\mu\right\|_{\BL,d}^*
\end{array}
\end{eqnarray}
According to Proposition \ref{main theorem new} and Theorem \ref{weak_implies_strong}, there exists $N\in\NN$ such that for $n,m\geq N$,
\[\left\|\left[P^1_{\frac{t}{n}}P^2_{\frac{t}{n}}\right]^{n}\mu_0-\left[P^1_{\frac{t}{m}}P^2_{\frac{t}{m}}\right]^{m}\mu_0\right\|_{\BL,d}^*<\epsilon/3.\]
Hence for $n,m\geq N$, we obtain for \ref{eq_equicontinuity} that
\[\left\|\left[P^1_{\frac{t}{n}}P^2_{\frac{t}{n}}\right]^{n}\mu-\left[P^1_{\frac{t}{m}}P^2_{\frac{t}{m}}\right]^{m}\mu\right\|_{\BL,d}^*<\frac{\epsilon}{3}+\frac{\epsilon}{3}+\frac{\epsilon}{3}=\epsilon\] which proves that $\left(\left[P^1_{\frac{t}{n}}P^2_{\frac{t}{n}}\right]^{n}\mu\right)_n$ is a Cauchy sequence.
\end{proof}
Lemma \ref{lemma_Cauchy_seq} allows us to define for $\mu\in\MM^+(S)$ and $t\in [0,\delta]$
\[\bar{\mathbb{P}}_t\mu:=\lim_{n\to\infty}\left[P^1_{\frac{t}{n}}P^2_{\frac{t}{n}}\right]^n\mu\] as a limit in $\MM^+(S)_{\BL}$. Then $\bar{\mathbb{P}}_t\mu_0=\mathbb{P}_t\mu_0$ for $\mu_0\in M_0$, according to Proposition \ref{main theorem new}.

Thus, as a consequence of Lemma \ref{lemma_Cauchy_seq} we have proven the first part of Theorem \ref{main}.

	Concerning the second part of the proof: the arguments in the proofs of the lemmas and propositions that together finish the proof of Theorem \ref{main}, show upon inspection that in case where stronger versions of Assumptions \ref{Assumption_commutator} and \ref{Assumption_4_extended_commutator} hold, then immediately $\|\cdot\|_{\BL,d}^*$-norm estimates can be obtained. That is, if is Assumptions \ref{Assumption_commutator} and \ref{Assumption_4_extended_commutator} a single $\delta_{3,f}$, $\delta_{4,f}$. $C_f(\mu_0)$ and $\omega_f(\cdot,\mu_0)$ can be chosen to hold uniformly for $f$ in the unit ball of $\BL(S,d)$, then one obtains Theorem \ref{main} (ie. norm-convergence of the Lie-Trotter product) without the need of Theorem \ref{weak_implies_strong}. Then one easily checks that convergence is uniform in $t$ in compact subsets of $\RR_+$. In fact for $\mu\in M_0$ this result is captured in the preceding remarks. Let $\Gamma\subset \RR_+$ be compact. According to Lemma \ref{lemma_eventually_equicontinuous} $\mathcal{F}_\Gamma^N$ is equicontinuous for $N$ sufficiently large. Then all estimates in the proof of Lemma \ref{lemma_Cauchy_seq} can be made uniformly in $t\in\Gamma$. 
	
 Moreover, in the situation described above, the rate of convergence of the Lie-Trotter product is controlled by properties of $\omega(\cdot,\mu_0)$, according to the proof of Proposition \ref{main theorem new}.

	\section{Properties of the limit}
	Let us now analyze properties of the limit operator family $(\mathbb{\overline{P}}_t)_{t\geq 0}$ as obtained by the Lie-Trotter product formula. First we show that $\mathbb{\overline{P}}_t$ is a Feller operator, i.e. it is continuous on $\MM^+(S)$ for $\|\cdot\|_{\mathrm{\BL,d}}^*$:
	\subsection{Feller property}

\begin{lemma}
	\label{convergence}
	Let  $(P^1_t)_{t\geq 0}$ and $(P^2_t)_{t\geq 0}$  be semigroups of regular Markov-Feller operators that satisfy Assumptions \ref{Assumption_equicontinuity_tightness}-\ref{Assumption_4_extended_commutator}. Let $(\mu_n)_{n\in\NN}\subset\MM^+(S)$ and $\mu^*\in\MM^+(S)$ be such that $\mu_n\to \mu^*$ in $\MM^+(S)_{\BL}$ as $n\to \infty$.  Then $\left[P^1_{\frac{t}{n}}P^2_{\frac{t}{n}}\right]^n\mu_n\to\mathbb{\overline{P}}_t\mu^*$ in $\MM^+(S)_{\BL}$ for $t\in[0,\delta_2]$.
\end{lemma}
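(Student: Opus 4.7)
The strategy is a standard triangle-inequality argument that decouples the variation in the initial measures from the variation in the operator family. Write
\[
\bigl\|[P^1_{t/n}P^2_{t/n}]^n\mu_n - \mathbb{\overline{P}}_t\mu^*\bigr\|_{\BL,d}^*
\leq \bigl\|[P^1_{t/n}P^2_{t/n}]^n\mu_n - [P^1_{t/n}P^2_{t/n}]^n\mu^*\bigr\|_{\BL,d}^*
+ \bigl\|[P^1_{t/n}P^2_{t/n}]^n\mu^* - \mathbb{\overline{P}}_t\mu^*\bigr\|_{\BL,d}^*.
\]
The second term tends to $0$ as $n\to\infty$ directly from Theorem \ref{main} applied to the fixed limit $\mu^*$, so the task reduces to handling the first term.

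For the first term the key ingredient is the stability Assumption \ref{Assumption_stability}: since $t\in[0,\delta_2]$, each operator $[P^1_{t/n}P^2_{t/n}]^n$ belongs to the equicontinuous family $\mathcal{F}(\delta_2)$. Fix $\eps>0$. Equicontinuity of $\mathcal{F}(\delta_2)$ at $\mu^*$ yields $\delta_\eps>0$ such that for every $P\in\mathcal{F}(\delta_2)$ and every $\nu\in\MM^+(S)$ with $\|\nu-\mu^*\|_{\BL,d}^*<\delta_\eps$ one has $\|P\nu-P\mu^*\|_{\BL,d}^*<\eps$. Because $\mu_n\to\mu^*$ in $\MM^+(S)_{\BL}$, there exists $N_1$ such that $\|\mu_n-\mu^*\|_{\BL,d}^*<\delta_\eps$ for all $n\geq N_1$, and consequently the first term is bounded by $\eps$ for all such $n$, uniformly in $t\in[0,\delta_2]$.

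Combining the two estimates, pick $N_2$ such that the second term is below $\eps$ for $n\geq N_2$ (this exists by Theorem \ref{main}); then for every $n\geq\max(N_1,N_2)$ the full expression is below $2\eps$, which proves the claimed convergence. The main obstacle, already dispatched by Assumption \ref{Assumption_stability}, is that the approximating operators are not uniformly bounded for $\|\cdot\|_{\BL,d}^*$, so one cannot argue by a Lipschitz estimate; equicontinuity of the full family $\mathcal{F}(\delta_2)$ at the single point $\mu^*$ replaces such a bound and lets us pass from pointwise convergence of the Lie--Trotter scheme at $\mu^*$ to convergence along the diagonal sequence $(\mu_n)$.
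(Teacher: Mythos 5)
Your proof is correct and is essentially identical to the paper's own argument: the same triangle-inequality split into $\|[P^1_{t/n}P^2_{t/n}]^n\mu_n-[P^1_{t/n}P^2_{t/n}]^n\mu^*\|_{\BL,d}^*$ plus $\|[P^1_{t/n}P^2_{t/n}]^n\mu^*-\mathbb{\overline{P}}_t\mu^*\|_{\BL,d}^*$, with the first term controlled by equicontinuity of $\mathcal{F}(\delta_2)$ from Assumption \ref{Assumption_stability} and the second by Theorem \ref{main}. Your closing remark on why equicontinuity substitutes for a uniform operator bound is a fair articulation of the point the paper leaves implicit.
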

\begin{proof}
	Let $\epsilon>0$.  From Assumption \ref{Assumption_stability} (stability) we get that there exists $\delta_\epsilon>0$ such that 
	\[\left\|\left[P^1_{\frac{t}{n}}P^2_{\frac{t}{n}}\right]^n\mu-\left[P^1_{\frac{t}{n}}P^2_{\frac{t}{n}}\right]^n\mu^*\right\|^*_{\BL,d}<\eps/2\] for every $\nu\in\MM^+(S)$ such that $\|\mu-\mu^*\|_{\BL,d}^*<\delta_\epsilon$ for all $t\in[0,\delta_2]$.\\
	Since $\mu_n\to\mu^*$, there exists $N_0\in\NN$ such that \[\|\mu_n-\mu^*\|_{\BL,d_{\mathcal{E}(f)}}^*<\delta_\epsilon\] for all $n\geq N_0$.
	From Theorem \ref{main} we know that  there exists $N_1\in\NN$ such that for every $n\geq N_1$
	\[\left\|\left[P^1_{\frac{t}{n}}P^2_{\frac{t}{n}}\right]^n\mu^*-\mathbb{\overline{P}}_t\mu^*\right\|_{\BL,d}<\epsilon/2\]
	Then for $n\geq N:=\max(N_0,N_1)$, 
	\begin{equation}
	\nonumber
	\begin{array}{llll}
	&\left\|\left[P^1_{\frac{t}{n}}P^2_{\frac{t}{n}}\right]^n\mu_n-\mathbb{\overline{P}}_t\mu^*\right\|_{\BL,d}^*&\leq& \left\|\left[P^1_{\frac{t}{n}}P^2_{\frac{t}{n}}\right]^n\mu_n-\left[P^1_{\frac{t}{n}}P^2_{\frac{t}{n}}\right]^n\mu^*\right\|_{\BL,d}^*\\
	&&+&\left\|\left[P^1_{\frac{t}{n}}P^2_{\frac{t}{n}}\right]^n\mu^*-\mathbb{\overline{P}}_t\mu^*\right\|_{\BL,d_{\mathcal{E}(f)}}^*<\epsilon.\quad \qedhere
	\end{array}
	\end{equation}
\end{proof}
\begin{prop}
	\label{prop_semigroup_k}
	If Assumptions \ref{Assumption_equicontinuity_tightness}-\ref{Assumption_4_extended_commutator} then for all $k\in\NN, t\geq 0$
	\[\mathbb{\overline{P}}_{kt}\mu=\mathbb{\overline{P}}^k_{t}\mu\text{ for all } \mu\in\MM^+(S).\]
	In particular, $\overline{\mathbb{P}}_t\overline{\mathbb{P}}_s\mu=\overline{ {\mathbb{P}}}_{t+s}\mu$ for all $t,s\geq 0$ such that $\frac{t}{s}\in\mathbb{Q}$.
\end{prop}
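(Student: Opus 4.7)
The plan is to prove the first assertion by induction on $k$, using Theorem \ref{main} to identify $\overline{\mathbb{P}}_{kt}\mu$ as the limit of a convenient subsequence of the Lie-Trotter approximants, and then pushing the inner limit through the outer composition via Lemma \ref{convergence}. The second assertion will follow by writing $t$ and $s$ over a common divisor.

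First I would fix $\mu \in \MM^+(S)$ and $t \geq 0$ and, by Theorem \ref{main}, write
\[
\overline{\mathbb{P}}_{kt}\mu \;=\; \lim_{n\to\infty}\left[P^1_{\frac{kt}{n}}P^2_{\frac{kt}{n}}\right]^n\mu
\]
in $\|\cdot\|_{\BL,d}^*$-norm. Restricting to the subsequence $n=km$ and setting $Q_m := \left[P^1_{t/m}P^2_{t/m}\right]^m$, this yields
\[
\overline{\mathbb{P}}_{kt}\mu \;=\; \lim_{m\to\infty} Q_m^{\,k}\mu.
\]
So it suffices to show that $Q_m^{\,k}\mu \to \overline{\mathbb{P}}_t^{\,k}\mu$ in $\MM^+(S)_{\BL}$ as $m \to \infty$. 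The case $k=1$ is the definition of $\overline{\mathbb{P}}_t\mu$. Assuming the statement for $k-1$, set $\mu_m := Q_m^{\,k-1}\mu$ and $\mu^* := \overline{\mathbb{P}}_t^{\,k-1}\mu$; by the induction hypothesis, $\mu_m \to \mu^*$ in $\MM^+(S)_{\BL}$. Then Lemma \ref{convergence}, applied to the converging sequence $(\mu_m)$, gives
\[
Q_m^{\,k}\mu \;=\; \left[P^1_{\frac{t}{m}}P^2_{\frac{t}{m}}\right]^m\mu_m \;\longrightarrow\; \overline{\mathbb{P}}_t\mu^* \;=\; \overline{\mathbb{P}}_t^{\,k}\mu,
\]
which closes the induction and establishes $\overline{\mathbb{P}}_{kt}\mu = \overline{\mathbb{P}}_t^{\,k}\mu$.

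For the second claim, if $t,s \geq 0$ and $t/s \in \QQ$, write $t = pr$ and $s = qr$ for some $p,q \in \NN$ and $r > 0$ (namely $r$ a common submultiple of $t$ and $s$). By the part just proved,
\[
\overline{\mathbb{P}}_t\overline{\mathbb{P}}_s\mu \;=\; \overline{\mathbb{P}}_r^{\,p}\,\overline{\mathbb{P}}_r^{\,q}\mu \;=\; \overline{\mathbb{P}}_r^{\,p+q}\mu \;=\; \overline{\mathbb{P}}_{(p+q)r}\mu \;=\; \overline{\mathbb{P}}_{t+s}\mu.
\]

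The main obstacle is the inductive step, where one must interchange a limit with the Lie-Trotter approximant $Q_m$. This is exactly the content of Lemma \ref{convergence}, so provided one can apply it at the relevant time $t$, the proof goes through mechanically. Since Lemma \ref{convergence} is stated for $t \in [0,\delta_2]$, the argument above applies directly for small $t$; for larger $t$ one either notes that the identical proof works under the eventual equicontinuity provided by Lemma \ref{lemma_eventually_equicontinuous} (the sequence $Q_m$ is automatically equicontinuous on compact time-intervals once $m$ is large enough), or one first reduces to small $t$ by choosing $r = t/N$ for $N$ large and invoking the already-proved identity $\overline{\mathbb{P}}_{Nr}=\overline{\mathbb{P}}_r^{\,N}$.
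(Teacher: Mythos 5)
Your proposal is correct and follows essentially the same route as the paper: pass to the subsequence $n=km$ of the approximants of $\overline{\mathbb{P}}_{kt}\mu$, run an induction on $k$, and use Lemma \ref{convergence} to push the inner limit $Q_m^{k-1}\mu\to\overline{\mathbb{P}}_t^{k-1}\mu$ through the outer factor $\left[P^1_{t/m}P^2_{t/m}\right]^m$; the rational-ratio case is handled identically via a common submultiple. Your closing remark on reducing large $t$ to $t\in[0,\delta_2]$ via $\overline{\mathbb{P}}_{Nr}=\overline{\mathbb{P}}_r^N$ is exactly the justification behind the paper's ``without loss of generality'' step.
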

\begin{proof}Let $\mu\in\MM^+(S)$.  Let $\epsilon>0$. 
	Without loss of generality we can assume that $t\in[0,\delta_2]$. For $k=1$ the statement is obviously true. Assume it has been proven for $k$. We now show it holds for $k+1$ as well. As we know that the limit of the Lie-Trotter product exists (Theorem \ref{main}), we can consider in the limit any subsequence. Take $n=(k+1)m$, $m\to\infty$:
	\[\overline{\mathbb{P}}_{(k+1)t}\mu=\lim_{m\to\infty} \left[P^1_{\frac{t}{m}}P^2_{\frac{t}{m}}\right]^{(k+1)m}\mu=\lim_{m\to\infty}\left[P^1_{\frac{t}{m}}P^2_{\frac{t}{m}}\right]^{m} \left(\left[P^1_{\frac{t}{m}}P^2_{\frac{t}{m}}\right]^{km}\mu\right).\]
	Hence there exists $N_0\in\NN$ such that for all $m>N_0$,
	\begin{align*}
	 \left\|\overline{\mathbb{P}}_{(k+1)t}\mu-\left[P^1_{\frac{t}{m}}P^2_{\frac{t}{m}}\right]^{m} \left(\left[P^1_{\frac{t}{m}}P^2_{\frac{t}{m}}\right]^{km}\mu\right)\right\|_{BL,d}^*<\frac{\epsilon}{3}
	\end{align*}
Since by assumption $\left[P^1_{\frac{t}{m}}P^2_{\frac{t}{m}}\right]^{km}\mu\to \overline{\mathbb{P}}_{kt}\mu$, Lemma \ref{convergence} yields that there exists $N_1\geq N_0$ such that for $m\geq N_1$:
\begin{align*}
\left\|\left[P^1_{\frac{t}{m}}P^2_{\frac{t}{m}}\right]^{m} \left(\left[P^1_{\frac{t}{m}}P^2_{\frac{t}{m}}\right]^{km}\mu\right)-\left[P^1_{\frac{t}{m}}P^2_{\frac{t}{m}}\right]^{m}\overline{ {\mathbb{P}}}_{kt}\mu\right\|_{BL,d}^*<\frac{\epsilon}{3}.
\end{align*}
Also, by Theorem \ref{main} we get $N_2\geq N_1$ such that for every $m\geq N_2$
\begin{align*}
\left\|\left[P^1_{\frac{t}{m}}P^2_{\frac{t}{m}}\right]^{m} \overline{{\mathbb{P}}}_{kt}\mu-\overline{\mathbb{P}}^{k+1}_t\mu\right\|_{BL,d}^*<\frac{\epsilon}{3}
\end{align*}
Hence for $m\geq N_2$, 
\begin{align*}
\left\|\overline{\mathbb{P}}_{(k+1)t}\mu-\overline{\mathbb{P}}^{k+1}_t\mu\right\|_{BL,d}^*&\leq \left\|\overline{\mathbb{P}}_{(k+1)t}\mu-\left[P^1_{\frac{t}{m}}P^2_{\frac{t}{m}}\right]^{m} \left(\left[P^1_{\frac{t}{m}}P^2_{\frac{t}{m}}\right]^{km}\mu\right)\right\|_{BL,d}^*\\
&+\left\|\left[P^1_{\frac{t}{m}}P^2_{\frac{t}{m}}\right]^{m} \left(\left[P^1_{\frac{t}{m}}P^2_{\frac{t}{m}}\right]^{km}\mu\right)-\left[P^1_{\frac{t}{m}}P^2_{\frac{t}{m}}\right]^{m} {\mathbb{P}}_{kt}\mu\right\|_{BL,d}^*\\
&+\left\|\left[P^1_{\frac{t}{m}}P^2_{\frac{t}{m}}\right]^{m} {\mathbb{P}}_{kt}\mu-\overline{\mathbb{P}}^{k+1}_t\mu\right\|_{BL,d}^*<\epsilon.
\end{align*}
If $t,s>0$ are such that  $\frac{t}{s}\in\mathbb{Q}$, then there exist $m,r\in\NN$: $rt=ms$. Hence, by the first part, 
\[\mathbb{\overline{P}}_{t+s}\mu=\mathbb{\overline{P}}_{(m+r)\cdot\frac{s}{r}}\mu=\mathbb{\overline{P}}^{(m+r)}_{\frac{s}{r}}\mu=\mathbb{\overline{P}}^{m}_{\frac{s}{r}}\mathbb{\overline{P}}^{r}_{\frac{s}{r}}\mu=\overline{\mathbb{P}}_t\overline{\mathbb{P}}_s\mu. \quad\qedhere\]
\end{proof}
\begin{prop} 
	\label{prop_Feller_limit}
	$\mathbb{\overline{P}}_t:\MM^+(S)_{\BL}\to		\MM^+(S)_{\BL}$ is continuous for all $t\geq 0$. 
\end{prop}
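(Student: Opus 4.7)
The plan is to reduce the proposition first to small values of $t$, where we can exploit the stability Assumption \ref{Assumption_stability} directly, and then bootstrap to all $t\geq 0$ using the semigroup identity already established in Proposition \ref{prop_semigroup_k}. The intuition is that $\overline{\mathbb{P}}_t$ inherits its continuity from the uniform-in-$n$ equicontinuity of $\mathcal{F}(\delta_2)=\{[P^1_{t/n}P^2_{t/n}]^n:n\in\NN,\,t\in[0,\delta_2]\}$, by simply passing to the limit $n\to\infty$ in the defining estimate of equicontinuity.

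More precisely, I would fix $\mu^*\in\MM^+(S)$, $\epsilon>0$ and $t\in[0,\delta_2]$. By Assumption \ref{Assumption_stability} the family $\mathcal{F}(\delta_2)$ is equicontinuous, so there exists $\delta_\epsilon>0$ such that
\[
\left\|[P^1_{t/n}P^2_{t/n}]^n\mu^*-[P^1_{t/n}P^2_{t/n}]^n\nu\right\|^*_{\BL,d}\;\leq\;\epsilon/2
\]
for every $\nu\in\MM^+(S)$ with $\|\mu^*-\nu\|^*_{\BL,d}<\delta_\epsilon$, for every $n\in\NN$ and every $t\in[0,\delta_2]$. By Theorem \ref{main} (or directly by the construction leading to \eqref{def_P}), both sequences $[P^1_{t/n}P^2_{t/n}]^n\mu^*$ and $[P^1_{t/n}P^2_{t/n}]^n\nu$ converge in $\|\cdot\|^*_{\BL,d}$ to $\overline{\mathbb{P}}_t\mu^*$ and $\overline{\mathbb{P}}_t\nu$ respectively. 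Since $\|\cdot\|^*_{\BL,d}$ is continuous, letting $n\to\infty$ in the inequality above yields
\[
\left\|\overline{\mathbb{P}}_t\mu^*-\overline{\mathbb{P}}_t\nu\right\|^*_{\BL,d}\;\leq\;\epsilon/2\;<\;\epsilon,
\]
which is precisely continuity of $\overline{\mathbb{P}}_t$ at $\mu^*$ for $t\in[0,\delta_2]$.

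To extend to arbitrary $t\geq0$, I would pick $k\in\NN$ large enough that $t/k\in[0,\delta_2]$. Since $(t,t/k)$ are rationally related, Proposition \ref{prop_semigroup_k} gives $\overline{\mathbb{P}}_t=\overline{\mathbb{P}}_{t/k}^{\,k}$. As $\overline{\mathbb{P}}_{t/k}$ is continuous by the previous step, $\overline{\mathbb{P}}_t$ is continuous as a composition of continuous maps.

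The only subtle point, and the one I would state carefully, is the step where one passes to the limit in the equicontinuity estimate: it requires that both limits $\overline{\mathbb{P}}_t\mu^*=\lim_n[P^1_{t/n}P^2_{t/n}]^n\mu^*$ and $\overline{\mathbb{P}}_t\nu=\lim_n[P^1_{t/n}P^2_{t/n}]^n\nu$ exist in $\|\cdot\|^*_{\BL,d}$ for the \emph{same} sequence of indices, which is guaranteed by Theorem \ref{main} applied to each of $\mu^*$ and $\nu$ separately. No diagonal argument is needed because the estimate from equicontinuity is uniform in $n$. Everything else is then essentially a formal consequence of previously established results; no new technical machinery is required.
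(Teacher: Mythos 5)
Your proposal is correct and follows essentially the same route as the paper: pass to the limit $n\to\infty$ in the uniform-in-$n$ equicontinuity estimate supplied by Assumption \ref{Assumption_stability} to get continuity for $t\in[0,\delta_2]$, then extend to all $t\geq 0$ via $\overline{\mathbb{P}}_t=\overline{\mathbb{P}}_{t/k}^{\,k}$ from Proposition \ref{prop_semigroup_k}. Your explicit remark that both limits must exist along the same index sequence (guaranteed by Theorem \ref{main}) is a correct and slightly more careful articulation of the step the paper performs implicitly.
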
 
	\begin{proof}First we will get the result for $t\in [0,\delta_2]$.

	Let $\mu\in\MM^+(S)$ and $\epsilon>0$. By Assumption \ref{Assumption_stability}, there exists $\delta_\epsilon>0$ such that
	\begin{equation}
	\label{eq_prop_Feller}
	\left\|\left[P^1_\frac{t}{n}P^2_{\frac{t}{n}}\right]^n\mu-\left[P^1_\frac{t}{n}P^2_{\frac{t}{n}}\right]^n\nu\right\|_{\BL,d}^*<\frac{\epsilon}{2}
	\end{equation}
	for every $\nu\in\MM^+(S)$ such that $\|\mu-\nu\|_{\BL,d}^*<\delta_\epsilon$ and all $n\in\NN, t\in [0,\delta_2]$.\\
	Then, by taking the limit $n\to\infty$ in (\ref{eq_prop_Feller}), using Theorem \ref{main},
	\[\left\|\mathbb{\overline{P}}_t\mu-\mathbb{\overline{P}}_t\nu\right\|_{\BL,d}^*\leq \frac{\epsilon}{2}<\epsilon\]
	for all $\mu,\nu\in\MM^+(S)$ such that $\|\mu-\nu\|_{\BL,d}^*<\delta_\epsilon$. So $\mathbb{\overline{P}}_t$ is continuous for all $t\in[0,\delta_2]$. 
	
	Now we can use Proposition \ref{prop_semigroup_k} to extend the result to all $t\geq 0$. 
\end{proof}
In the proof we actually show more, which we formulate as a corollary.
\begin{clry}
	\label{Corollary_equicontinuity}
The family	$\overline{\mathcal{P}}(\delta)=\left\{\overline{\mathbb{P}}_t:t\in [0,\delta]\right\}$ is equicontinuous for every $0<\delta\leq \delta_2$. 
\end{clry}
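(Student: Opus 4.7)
The plan is to observe that the proof of Proposition \ref{prop_Feller_limit} already yields uniformity in $t$, so the corollary is essentially a matter of packaging that argument. Concretely, fix $\mu\in\MM^+(S)$ and $\eps>0$. The first step is to apply Assumption \ref{Assumption_stability}: since $\mathcal{F}(\delta_2)=\left\{\left[P^1_{t/n}P^2_{t/n}\right]^n : n\in\NN,\, t\in[0,\delta_2]\right\}$ is an equicontinuous family of Markov operators, there exists $\delta_\eps>0$ (depending on $\mu$ and $\eps$) such that
\[
\left\|\left[P^1_{t/n}P^2_{t/n}\right]^n\mu-\left[P^1_{t/n}P^2_{t/n}\right]^n\nu\right\|_{\BL,d}^*<\tfrac{\eps}{2}
\]
for every $\nu\in\MM^+(S)$ with $\|\mu-\nu\|_{\BL,d}^*<\delta_\eps$, for all $n\in\NN$ and all $t\in[0,\delta_2]$. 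The key point is that $\delta_\eps$ does not depend on $n$ or $t$.

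The second step is to pass to the limit $n\to\infty$. By Theorem \ref{main}, for each fixed $t\in[0,\delta]\subset[0,\delta_2]$ the sequences $\left[P^1_{t/n}P^2_{t/n}\right]^n\mu$ and $\left[P^1_{t/n}P^2_{t/n}\right]^n\nu$ converge in $\|\cdot\|_{\BL,d}^*$-norm to $\overline{\mathbb{P}}_t\mu$ and $\overline{\mathbb{P}}_t\nu$ respectively. The triangle inequality together with the previous uniform estimate therefore yields
\[
\left\|\overline{\mathbb{P}}_t\mu-\overline{\mathbb{P}}_t\nu\right\|_{\BL,d}^*\leq \tfrac{\eps}{2}<\eps
\]
for every $\nu\in\MM^+(S)$ with $\|\mu-\nu\|_{\BL,d}^*<\delta_\eps$, and this holds simultaneously for all $t\in[0,\delta]$.

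Since $\delta_\eps$ is independent of $t\in[0,\delta]$, this is exactly the statement that $\overline{\mathcal{P}}(\delta)$ is equicontinuous at $\mu$. As $\mu\in\MM^+(S)$ was arbitrary, the family is equicontinuous. There is no real obstacle here: the only thing to verify is the uniformity of $\delta_\eps$ in $t$, which is precisely what Assumption \ref{Assumption_stability} provides through the family $\mathcal{F}(\delta_2)$. In fact, the proof of Proposition \ref{prop_Feller_limit} already carried out exactly this estimate; the corollary just records that the bound obtained there is uniform in $t\in[0,\delta]$.
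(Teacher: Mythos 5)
Your proof is correct and is exactly the paper's argument: the paper derives this corollary as a byproduct of the proof of Proposition \ref{prop_Feller_limit}, where the same uniform-in-$t$ (and $n$) choice of $\delta_\eps$ from Assumption \ref{Assumption_stability} is combined with the limit $n\to\infty$ from Theorem \ref{main}. Nothing is missing.
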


\subsection{Semigroup property}
Let us now analyze the full semigroup property of the limit. 
Recall Proposition \ref{prop_semigroup_k}. The extension to all pairs $t,s\in\RR_+$ of the semigroup property is not obvious. We do not assume any continuity of Markov semigroups. However,
\begin{prop}
	\label{Proposition_semigroup}
	Assume that Assumptions \ref{Assumption_equicontinuity_tightness}-\ref{Assumption_4_extended_commutator} hold and additionally that $t\mapsto P_t^i\mu:\RR_+\to\MM^+(S)_{\BL}$ are continuous for $i=1,2$ and all $\mu\in\MM^+(S)$. Then $(\overline{\mathbb{P}}_t)_{t\geq 0}$ is strongly continuous and it is a semigroup. 
\end{prop}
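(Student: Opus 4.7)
My plan is to establish strong continuity of $t\mapsto\overline{\mathbb{P}}_t\mu$ and the full semigroup property in tandem, bootstrapping from the rational-ratio semigroup law of Proposition \ref{prop_semigroup_k} together with Corollary \ref{Corollary_equicontinuity}. The approach has three stages: (i) uniform Lie-Trotter convergence in $t$ for $\mu_0\in M_0$, which gives continuity of $t\mapsto\overline{\mathbb{P}}_t\mu_0$; (ii) extension to every $\mu\in\MM^+(S)$ by density and equicontinuity; (iii) upgrade of the rational-ratio semigroup identity to the full semigroup law.

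For (i), fix $\mu_0\in M_0$, $f\in\BL(S,d)$ and $T>0$. Inspecting the Cauchy estimate in the proof of Lemma \ref{lemma subsequence} and using monotonicity of $\omega_f(\cdot,\mu_0)$, one obtains for $t\in[0,T]$ and $i>j$ sufficiently large
\[
\bigl|\langle[P^1_{t/2^i}P^2_{t/2^i}]^{2^i}\mu_0-[P^1_{t/2^j}P^2_{t/2^j}]^{2^j}\mu_0,\,f\rangle\bigr|\leq \tfrac{T}{2}C_f(\mu_0)\sum_{l=j}^{\infty}\omega_f(T/2^{l+1},\mu_0),
\]
whose right-hand side tends to $0$ as $j\to\infty$ by Lemma \ref{lemma_omega}. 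This gives uniform convergence of the dyadic Lie-Trotter subsequence in $t\in[0,T]$, and the estimate in the proof of Proposition \ref{main theorem new} extends this to the full sequence. Each approximant $t\mapsto\langle[P^1_{t/n}P^2_{t/n}]^n\mu_0,f\rangle$ is continuous: the added continuity of $t\mapsto P^i_t\mu$ together with equicontinuity of $\mathcal{P}^i(\delta_1)$ from Assumption \ref{Assumption_equicontinuity_tightness} gives joint continuity of $(s,\nu)\mapsto P^i_s\nu$, and iterated composition with the continuous rescaling $t\mapsto t/n$ preserves continuity. The uniform limit is therefore continuous, so $t\mapsto\langle\overline{\mathbb{P}}_t\mu_0,f\rangle$ is continuous for every $f\in\BL(S,d)$. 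Because $\|\overline{\mathbb{P}}_t\mu_0\|_{\mathrm{TV}}=\|\mu_0\|_{\mathrm{TV}}$ for all $t$, Theorem \ref{weak_implies_strong} upgrades this weak continuity to $\|\cdot\|_{\BL,d}^*$-norm continuity of $t\mapsto\overline{\mathbb{P}}_t\mu_0$.

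For (ii), the family $\overline{\mathcal{P}}([0,T])$ is equicontinuous on any compact interval: Corollary \ref{Corollary_equicontinuity} handles $T\leq\delta_2$, while for larger $T$ we write $\overline{\mathbb{P}}_t=\overline{\mathbb{P}}_{t/k}^k$ (Proposition \ref{prop_semigroup_k}) with $k$ chosen so that $T/k\leq\delta_2$ and apply Theorem \ref{Theorem_composition} $k$ times. A standard $\eps/3$-argument with $\mu_0\in M_0$ close to $\mu$ in $\|\cdot\|_{\BL,d}^*$ then transfers continuity from $M_0$ to all of $\MM^+(S)$, giving strong continuity of $(\overline{\mathbb{P}}_t)_{t\geq 0}$. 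For (iii), given any $t,s\geq 0$, approximate by positive rationals $r_n\to t$ and $s_n\to s$; since $r_n/s_n\in\QQ$, Proposition \ref{prop_semigroup_k} yields $\overline{\mathbb{P}}_{r_n+s_n}\mu=\overline{\mathbb{P}}_{r_n}\overline{\mathbb{P}}_{s_n}\mu$. The left-hand side tends to $\overline{\mathbb{P}}_{t+s}\mu$ by the strong continuity from (ii), while the right-hand side is handled by the decomposition
\[
\overline{\mathbb{P}}_{r_n}\overline{\mathbb{P}}_{s_n}\mu-\overline{\mathbb{P}}_t\overline{\mathbb{P}}_s\mu=\overline{\mathbb{P}}_{r_n}\bigl(\overline{\mathbb{P}}_{s_n}\mu-\overline{\mathbb{P}}_s\mu\bigr)+\bigl(\overline{\mathbb{P}}_{r_n}\overline{\mathbb{P}}_s\mu-\overline{\mathbb{P}}_t\overline{\mathbb{P}}_s\mu\bigr),
\]
whose first term vanishes by equicontinuity of $\overline{\mathcal{P}}([0,T])$ applied to $\overline{\mathbb{P}}_{s_n}\mu\to\overline{\mathbb{P}}_s\mu$, and whose second vanishes by continuity of $r\mapsto\overline{\mathbb{P}}_r\overline{\mathbb{P}}_s\mu$ at $r=t$; the edge case $s=0$ is trivial since $\overline{\mathbb{P}}_0=I$.

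The main obstacle is step (i): verifying that the existing Lie-Trotter error estimates are uniform in $t$ on compacts. This relies only on monotonicity of $\omega_f(\cdot,\mu_0)$ and Lemma \ref{lemma_omega}, and requires no strengthening of Assumptions \ref{Assumption_commutator}--\ref{Assumption_4_extended_commutator}; the added strong continuity of $(P^i_t)$ is needed solely to propagate continuity from the individual Lie-Trotter approximants to their limit.
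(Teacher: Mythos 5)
Your overall route is essentially the paper's: continuity of $t\mapsto\langle\overline{\mathbb{P}}_t\mu_0,f\rangle$ on $M_0$ via uniform convergence of the continuous dyadic approximants (Lemma \ref{lemma subsequence}), upgrade to $\|\cdot\|_{\BL,d}^*$-continuity by the Schur-like Theorem \ref{weak_implies_strong}, extension to all of $\MM^+(S)$ by density of $M_0$ plus equicontinuity, and passage from the rational-ratio identity of Proposition \ref{prop_semigroup_k} to the full semigroup law by continuity. Steps (i) and the $\eps/3$-transfer in (ii) are sound, and your observation that the added hypothesis is needed only to make each approximant $F_n$ continuous matches the paper.

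There is, however, one concrete gap: both in (ii) (for $T>\delta_2$) and in (iii) (for the term $\overline{\mathbb{P}}_{r_n}(\overline{\mathbb{P}}_{s_n}\mu-\overline{\mathbb{P}}_s\mu)$ with $r_n\to t$ arbitrary) you invoke equicontinuity of $\overline{\mathcal{P}}([0,T])$ obtained by ``applying Theorem \ref{Theorem_composition} $k$ times'' to $\overline{\mathbb{P}}_t=\overline{\mathbb{P}}_{t/k}^k$. Theorem \ref{Theorem_composition} requires the \emph{inner} family to be tight, and tightness of $\overline{\mathcal{P}}(\delta_2)$ is established nowhere — Corollary \ref{Corollary_equicontinuity} gives only equicontinuity, and Assumption \ref{Assumption_stability} does not assume tightness of $\mathcal{F}(\delta_2)$. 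As written, the composition step is therefore unjustified. The gap is fillable in two ways. Either note that once $t\mapsto\overline{\mathbb{P}}_t\mu$ is shown $\|\cdot\|_{\BL,d}^*$-continuous on the compact interval $[0,\delta_2]$ (your step (i)+(ii) restricted to that interval), the orbit $\{\overline{\mathbb{P}}_t\mu:t\in[0,\delta_2]\}$ is compact, hence uniformly tight by Prokhorov, so $\overline{\mathcal{P}}(\delta_2)$ is tight and Theorem \ref{Theorem_composition} becomes applicable; or follow the paper, which avoids the issue entirely by an induction on the intervals $[0,m\delta_2)$, writing $\overline{\mathbb{P}}_{t_k}\mu=\overline{\mathbb{P}}_{t_k/(m+1)}\bigl(\overline{\mathbb{P}}_{mt_k/(m+1)}\mu\bigr)$ and using only equicontinuity of $\overline{\mathcal{P}}(\delta_2)$ together with the inductive continuity hypothesis. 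Either repair leaves the rest of your argument intact.
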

\begin{proof}
	Put $\mathbb{Q}^{n}_{t}:=\left[P^1_{\frac{t}{n}}P^2_{\frac{t}{n}}\right]^n$. If $\mu_0\in M_0$, then by strong continuity of the semigroup $(P_t^i)_{t\geq 0}$  on $\MM^+(S)$, we obtain that $F_n:\RR_+\to\RR:t\mapsto \langle \mathbb{Q}^{n}_{t}\mu_0,f\rangle$ is continuous for all $n\in\NN$. According to Lemma \ref{lemma subsequence}, $F_{2^N}$ converges uniformly on compact subsets of $\RR_+$ to $t\mapsto \langle \overline{ {\mathbb{P}}}\mu_0,f \rangle$. Hence the latter function is continuous on $\RR_+$.
	
	Now, first take $t^*\in \left[0,\delta_2\right)$  and $(t_k)_{k}\subset\left[0,\delta_2\right) $ such that $(t_k)_k\to t^*$. Let $\mu\in\MM^+(S)$ and $\epsilon>0$. Since the family $\overline{ {\mathcal{P}}}(\delta_2)$ is equicontinuous (Corollary \ref{Corollary_equicontinuity}), there exists $\delta_\epsilon>0$ such that for all $\nu\in\MM^+(S)$ with $\|\mu-\nu\|_{\BL,d}^*<\delta_\epsilon$,
	\[\left\|\overline{ {\mathbb{P}}}_t\mu-\overline{ {\mathbb{P}}}_t\nu\right\|_{\BL,d}^*<\frac{\epsilon}{3(1+\|f\|_{\BL,d})}\quad \text{ for all } t\in [0,\delta_2].\]
	$M_0$ is dense in $\MM^+(S)$. So there exists $\nu_0\in M_0$ such that $\|\mu-\mu_0\|_{\BL,d}^*<\delta_\epsilon$. Then
	\begin{align*}
	\left|\left\langle\overline{ {\mathbb{P}}}_{t^*}\mu-\overline{ {\mathbb{P}}}_{t_k}\mu,f\right\rangle\right|&\leq \left\|\overline{ {\mathbb{P}}}_{t^*}\mu-\overline{ {\mathbb{P}}}_{t^*}\mu_0\right\|_{\BL,d}^*\cdot\|f\|_{\BL,d}\\
	&+\left|\left\langle\overline{ {\mathbb{P}}}_{t^*}\mu_0-\overline{ {\mathbb{P}}}_{t_k}\mu_0,f\right\rangle\right|+ \left\|\overline{ {\mathbb{P}}}_{t_k}\mu_0-\overline{ {\mathbb{P}}}_{t_k}\mu\right\|_{\BL,d}^*\cdot\|f\|_{\BL,d}\\
	&<\frac{\epsilon}{3}+\frac{\epsilon}{3}+\frac{\epsilon}{3}=\epsilon,
	\end{align*}
	when $k\geq N$ such that $\left|\left\langle\overline{ {\mathbb{P}}}_{t^*}\mu_0-\overline{ {\mathbb{P}}}_{t_k}\mu_0,f\right\rangle\right|<\frac{\epsilon}{3}$ for all $k\geq N$. So, by Theorem \ref{weak_implies_strong}, $t\mapsto \overline{ {\mathbb{P}}}_t\mu$ is continuous on $\left[0,\delta_2\right)$.

Now we show that continuity of $t\mapsto \overline{ {\mathbb{P}}}_t\mu$ on $\left[0,m\delta_2\right)$ implies continuity on $\left[0,(m+1)\delta_2\right)$. Let $t^*\in \left[0,(m+1)\delta_2\right)$ and $t_k\in \left[0,(m+1)\delta_2\right)$ such that $t_k\to t^*$. According to Proposition \ref{prop_semigroup_k},
\begin{align*}
\overline{ {\mathbb{P}}}_{t_k}\mu&=\overline{ {\mathbb{P}}}_{\frac{t_k}{m+1}}\left(\overline{ {\mathbb{P}}}_{\frac{mt_k}{m+1}}\mu\right)=\overline{ {\mathbb{P}}}_{\frac{t_k}{m+1}}\left[\overline{ {\mathbb{P}}}_{\frac{mt_k}{m+1}}\mu-\overline{ {\mathbb{P}}}_{\frac{mt^*}{m+1}}\mu\right]+\overline{ {\mathbb{P}}}_{\frac{t_k}{m+1}}\cdot\overline{ {\mathbb{P}}}_{\frac{mt^*}{m+1}}\mu.
\end{align*}
Because $\frac{t_k}{m+1}\in \left[0,\delta_0\right)$, $\overline{ {\mathcal{P}}}(\delta_2)$ is equicontinuous and $\overline{ {\mathbb{P}}}_{\frac{mt_k}{m+1}}\mu\to\overline{ {\mathbb{P}}}_{\frac{mt^*}{m+1}}\mu$ by assumption, the first term can be made arbitrarily small for sufficiently large $k$. The second term converges to $\overline{ {\mathbb{P}}}_{\frac{t^*}{m+1}}\cdot\overline{ {\mathbb{P}}}_{\frac{mt^*}{m+1}}\mu$, which equals $\overline{ {\mathbb{P}}}_{t^*}\mu$ by Proposition \ref{prop_semigroup_k}. So indeed, $t\mapsto \overline{ {\mathbb{P}}}_t\mu$ is continuous on $\left[0,(m+1)\delta_2\right)$. We conclude that $t\mapsto \overline{ {\mathbb{P}}}_t\mu$ is continuous on $\RR_+$. According to Proposition \ref{prop_semigroup_k}, $\overline{ {\mathbb{P}}}_t\overline{ {\mathbb{P}}}_s\mu=\overline{ {\mathbb{P}}}_{t+s}\mu$ for all $t,s\in\RR_+$ such that $\frac{t}{s}\in\mathbb{Q}$. Because $t\mapsto \overline{ {\mathbb{P}}}_t\mu$ is continuous, the semigroup property must hold for all $t,s\in\RR_+$. 
\end{proof}

We say that Markov semigroup is \textbf{stochastically continuous at 0} if $\lim_{h\searrow 0}P_h\mu=\mu$ for every  $\mu\in\MM^+(S)_{\BL}$.  Stochastic continuity at 0 implies \textbf{right-continuity} at every $t_0\geq 0$, but not left-continuity.\\
Next result shows together with equicontinuity, stochastic continuity at 0 implies strong continuity.
\begin{prop}
	\label{prop1}
	Let $(P_t)_{t\geq 0}$ be a Markov-Feller semigroup. Assume that there exists $\delta>0$ such that $(P_t)_{t\in[0,\delta]}$ is equicontinuous. If $(P_t)_{t\geq 0}$ is stochastically continuous at 0, then it is strongly continuous. 
\end{prop}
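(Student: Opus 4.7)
The plan is to establish strong continuity first on the interval $[0,\delta]$, using stochastic continuity at $0$ together with the equicontinuity hypothesis, and then to bootstrap to all of $\RR_+$ by a straightforward induction built on the semigroup law. Fix $\mu\in\MM^+(S)$ and $\epsilon>0$, and let $\eta>0$ come from the equicontinuity of $(P_t)_{t\in[0,\delta]}$ at $\mu$: any $\nu\in\MM^+(S)$ with $\|\nu-\mu\|_{\BL,d}^*<\eta$ satisfies $\|P_t\nu-P_t\mu\|_{\BL,d}^*<\epsilon$ for every $t\in[0,\delta]$. Stochastic continuity at $0$ then furnishes $h_0>0$ such that $\|P_h\mu-\mu\|_{\BL,d}^*<\eta$ for all $h\in[0,h_0]$.

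Right-continuity at $t_0\in[0,\delta)$ is essentially immediate: for $h\in[0,h_0]$ with $t_0+h\leq\delta$, the semigroup law gives $P_{t_0+h}\mu=P_{t_0}(P_h\mu)$, and the equicontinuity bound applied to $\nu:=P_h\mu$ yields $\|P_{t_0+h}\mu-P_{t_0}\mu\|_{\BL,d}^*<\epsilon$. The delicate point is left-continuity at $t_0\in(0,\delta]$: I would rewrite $P_{t_0}\mu=P_{t_0-h}(P_h\mu)$ and invoke equicontinuity with operator parameter $t_0-h\in[0,\delta]$, obtaining
\[
\|P_{t_0}\mu-P_{t_0-h}\mu\|_{\BL,d}^* \;=\; \|P_{t_0-h}(P_h\mu)-P_{t_0-h}\mu\|_{\BL,d}^* \;<\;\epsilon
\]
for $h\in[0,h_0]$ with $h\leq t_0$. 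This step is the main obstacle: continuity of each individual operator $P_t$ would \emph{not} suffice, because both the operator $P_{t_0-h}$ and its argument $P_h\mu$ vary with $h$. The uniformity of the continuity estimate over the operator index $t\in[0,\delta]$ --- which is exactly what equicontinuity of the family provides --- is precisely the ingredient that makes the left-sided estimate go through.

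To extend from $[0,\delta]$ to all of $\RR_+$, I would argue by induction on $k\in\NN$: supposing $t\mapsto P_t\mu$ has already been shown to be continuous on $[0,k\delta]$, for $t\in[k\delta,(k+1)\delta]$ one writes $P_t\mu=P_{t-k\delta}\nu$ with $\nu:=P_{k\delta}\mu\in\MM^+(S)$, and applies the base case (with $\mu$ replaced by $\nu$) to obtain continuity of $s\mapsto P_s\nu$ on $[0,\delta]$. Matching at the joining point $t=k\delta$ is automatic from the one-sided continuity statements on the two adjacent intervals together with the semigroup identity $P_{k\delta}\mu=P_0 P_{k\delta}\mu$. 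Combining these steps delivers strong continuity of $(P_t)_{t\geq 0}$ on $\RR_+$.
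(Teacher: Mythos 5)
Your proof is correct and follows essentially the same route as the paper: the decisive step in both is the identity $P_{t_0}\mu = P_{t_0-h}(P_h\mu)$ combined with the uniformity of the equicontinuity estimate over the operator index $t\in[0,\delta]$, which is exactly what rescues left-continuity where stochastic continuity alone fails. The only cosmetic difference is in the globalization: the paper first upgrades equicontinuity of the family from $[0,\delta]$ to $[0,T]$ for every $T$ (using the Feller property of the individual operators $P_{t'}$) and then argues at an arbitrary $t_0$, whereas you keep the family on $[0,\delta]$ and bootstrap the continuity of the orbit via the semigroup law; both routes are routine and valid.
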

\begin{proof}
	
	$(P_t)_{t\in[0,\delta]}$ is equicontinuous and  $P_{t'}$ is Feller for all $t'\geq 0$. Consequently, $(P_t)_{t\in[t', t'+\delta]}$ is an equicontinuous family for every $t'\in\RR_+$. Hence $(P_t)_{t\in [0,T]}$ is equicontinuous for every $T\in \RR_+$. So, if $\eps>0$, there exists an open neighbourhood $U$ in $\MM^+(S)$ of $\mu$ such that 
	\[\|P_t\nu-P_t\mu\|_{\BL}^*<\eps\] for every $\nu\in U$. Let $t_0>0$. \\
	From the fact, that $(P_t)_{t\geq 0}$ is (strongly) stochastically continuous at 0, there exists $\delta>0$ such that for every $0<h<\delta$, $P_h\mu\in U$. Then, from the fact that
	\[\|P_{t_0}\mu-P_{t_0-h}\mu\|_{\BL}^*=\|P_{t_0-h}\mu-P_{t_0-h}P_{h}\mu\|_{\BL}^*,\]
	we get
	\[\|P_{t_0-h}\mu-P_{t_0}\mu\|_{\BL}^*<\eps  \text { for all } \, 0<h<\delta.\]
	So  $t\mapsto P_t\mu$ is also left-continuous at every $t_0> 0$. 
\end{proof}
\begin{clry}
	If $(P_t)_{t\geq 0}$ is stochastically continuous and $(P_t)_{t\in[0,\delta]}$ is equicontinuous, then $(P_t)_{t\in [0,T]}$ is tight for every $T>0$. 
\end{clry}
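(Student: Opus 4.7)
The plan is to reduce the statement to Prokhorov's theorem via the previously established Proposition \ref{prop1}. The key observation is that for a single measure $\mu\in\MM^+(S)$, the orbit $\{P_t\mu : t\in [0,T]\}$ is the continuous image of a compact interval, hence compact in $\MM^+(S)_{\BL}$, and then the Prokhorov-type equivalence from Theorem~8.6.2 in \cite{bogachev_2} (stated in Section \ref{section preliminaries}) converts compactness into uniform tightness.

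More precisely, first I would invoke Proposition \ref{prop1}: the combination of stochastic continuity at $0$ with equicontinuity on $[0,\delta]$ gives that $(P_t)_{t\geq 0}$ is strongly continuous. In particular, for any fixed $\mu\in\MM^+(S)$, the map $t\mapsto P_t\mu:\RR_+\to\MM^+(S)_{\BL}$ is continuous. Restricting to the compact interval $[0,T]$ yields that $K_\mu:=\{P_t\mu : t\in [0,T]\}$ is a compact subset of $\MM^+(S)_{\BL}$, hence in particular relatively compact for the $\|\cdot\|_{\BL,d}^*$-topology.

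Next, since $P_t$ is a Markov operator, $\|P_t\mu\|_{TV}=\|\mu\|_{TV}$ for every $t\geq 0$, so the family $K_\mu$ is uniformly bounded in total variation norm. Applying the Prokhorov theorem as stated in Section \ref{section preliminaries} (on the Polish space $S$), the relative compactness of $K_\mu$ in $\MM^+(S)_{\BL}$ together with the uniform $\|\cdot\|_{TV}$-bound implies that $K_\mu$ is uniformly tight. Since this holds for every $\mu\in\MM^+(S)$, by definition the family $(P_t)_{t\in[0,T]}$ is tight.

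There is no serious obstacle: the only nontrivial ingredient is Proposition \ref{prop1}, which is already proved, and the Prokhorov equivalence between relative compactness and uniform tightness on Polish spaces. The argument is a two-line application of these two facts.
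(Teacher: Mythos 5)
Your proof is correct and is essentially the argument the paper intends: Proposition \ref{prop1} gives continuity of the orbit map $t\mapsto P_t\mu$, so $\{P_t\mu: t\in[0,T]\}$ is compact in $\MM^+(S)_{\BL}$, and since the $\|\cdot\|_{\BL,d}^*$-topology coincides with the weak topology on $\MM^+(S)$ while $\|P_t\mu\|_{TV}=\|\mu\|_{TV}$, Prokhorov's theorem yields uniform tightness of each orbit, which is the paper's definition of tightness of the family. The same "continuous orbit over a compact interval $\Rightarrow$ compact $\Rightarrow$ tight" step appears verbatim in the paper's treatment of the K\"uhnemund--Wacker lifts, so there is nothing to add.
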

\begin{remark}
	\label{rem_1}
	From Proposition \ref{prop1} we can conclude that a Markov semigroup that is stochastically continuous at 0 but not strongly continuous, cannot be equicontinuous. 
\end{remark}

\subsection{Symmetry}
We prove that, if the family $\mathcal{P}^1(\delta)$ is tight-as we assume in Assumption \ref{Assumption_equicontinuity_tightness}- then the limit does not depend on the order in which we start switching semigroups $(P^1_t)_{t\geq 0}$ and $(P^2_t)_{t\geq 0}$.

Now let us prove the following lemma. 
\begin{lemma}
	\label{lemma_swapping_semigroups}Let  $(P^1_t)_{t\in\TT}$ and $(P^2_t)_{t\in\TT}$  be semigroups of regular Markov-Feller operators. Let $n\in\NN$, $t\in\RR_+$. Then
	
	\begin{align}
	\label{eq_swapped}
	\left(P^1_{t}P^2_{t}\right)^n-\left(P^2_{t}P^1_{t}\right)^n&=\sum_{i=0}^{n-1}(P_t^2P_t^1)^{n-i-1}C_{t,t}^{1,2}(P_t^1P_t^2)^i \\
	&		=\sum_{i=0}^{n-1}(P_t^1P_t^2)^{n-i-1}C_{t,t}^{1,2}(P_t^2P_t^1)^i
	\end{align}
	
	where $C_{s,t}^{i,j}=P^i_sP^j_t-P^j_tP^i_s$. 
\end{lemma}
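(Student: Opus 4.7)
The statement is a purely algebraic identity about compositions of operators; no Markov/tightness structure enters. The plan is to reduce it to the standard telescoping identity
\[
A^n - B^n \;=\; \sum_{i=0}^{n-1} B^{n-1-i}(A-B)A^i \;=\; \sum_{i=0}^{n-1} A^{n-1-i}(A-B)B^i,
\]
valid in any (non-commutative) ring, applied with $A := P^1_t P^2_t$ and $B := P^2_t P^1_t$. Then $A - B = P^1_t P^2_t - P^2_t P^1_t = C^{1,2}_{t,t}$, and the two expressions in the lemma are precisely the two telescoping expansions above.

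First I would note that the semigroups act on the cone $\MM^+(S)$ by additive, $\RR_+$-homogeneous maps, hence extend to linear operators on $\MM(S)$, so composition behaves like multiplication in an associative (non-commutative) algebra, and the identity $A^n - B^n = \sum_{i=0}^{n-1} B^{n-1-i}(A-B)A^i$ is meaningful. I would then verify this identity by direct telescoping:
\[
\sum_{i=0}^{n-1} B^{n-1-i}(A-B)A^i \;=\; \sum_{i=0}^{n-1}\bigl(B^{n-1-i}A^{i+1} - B^{n-i}A^i\bigr),
\]
which is a telescoping sum collapsing to $A^n - B^n$ (the $i=n-1$ term contributes $A^n$ and the $i=0$ term contributes $-B^n$, with all intermediate terms cancelling). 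Exactly the same telescoping, but grouping the pair $A^{n-1-i}A^{i+1}\cdot B^0 - A^{n-1-i}\cdot B\cdot B^i$, gives the alternative form $A^n - B^n = \sum_{i=0}^{n-1} A^{n-1-i}(A-B)B^i$.

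Substituting $A - B = C^{1,2}_{t,t}$ yields the two displayed formulas in the lemma. The only thing to be careful about is the bookkeeping of the exponent $n-i-1$ versus $n-1-i$ and the placement of the left and right factors; a short induction on $n$ can be used as an alternative check if one prefers to avoid the manipulation of the telescoping sum directly: the base case $n=1$ reduces to $A - B = C^{1,2}_{t,t}$, and the inductive step uses $A^{n+1} - B^{n+1} = A(A^n - B^n) + (A-B)B^n = (A^n - B^n)B + A^n(A-B)$ to produce the two forms simultaneously.

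There is no real obstacle here: the claim is a formal identity about words in two operators, and the only mild subtlety is ensuring that the sums have the indices written in the conventions used in the statement. I expect the write-up to consist of one or two lines of telescoping together with the identification $A-B = C^{1,2}_{t,t}$.
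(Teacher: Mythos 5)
Your proof is correct and is essentially the paper's argument: the paper verifies the same identity $A^n-B^n=\sum_{i=0}^{n-1}B^{n-1-i}(A-B)A^i$ with $A=P^1_tP^2_t$, $B=P^2_tP^1_t$ by induction on $n$, which is just your telescoping sum organized inductively. Your preliminary remark that the Markov operators extend to linear maps on $\MM(S)$ (so the algebra makes sense) matches the extension $P\mu:=P\mu^+-P\mu^-$ already set up in the paper's preliminaries.
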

\begin{proof}
	We prove (\ref{eq_swapped}) by induction. Let $L_n$ denote the left hand side in equality (\ref{eq_swapped}), $R_n$ the right hand side. Obviously $L_1=R_1$.
	Assume that $L_{n-1}=R_{n-1}$. Then:
	\begin{align}
	\nonumber
	\begin{array}{llll}
	&L_n&=&\left(P^1_{s}P^2_{s}\right)^n-\left(P^2_{s}P^1_{s}\right)^n=\\
	&&=&\left[\left(P^1_{s}P^2_{s}\right)^{n-1}-\left(P^2_{s}P^1_{s}\right)^{n-1}\right]P_s^1P_s^2+\left(P^2_{s}P^1_{s}\right)^{n-1}P_s^1P_s^2-\left(P^2_{s}P^1_{s}\right)^n=\\
	&&=&\left[\sum_{i=0}^{n-2}(P_s^2P_s^1)^{n-i-2}C_{s,s}^{1,2}(P_s^1P_s^2)^i\right]P_s^1P_s^2+\left(P^2_{s}P^1_{s}\right)^{n-1}\left(P_s^1P_s^2-P^2_{s}P^1_{s}\right)=\\
	&&=&\sum_{i=0}^{n-2}(P_s^2P_s^1)^{n-i-2}C_{s,s}^{1,2}(P_s^1P_s^2)^{i+1}+\left(P^2_{s}P^1_{s}\right)^{n-1}C_{s,s}^{1,2}=\\
	&&=&\sum_{i=0}^{n-1}(P_s^2P_s^1)^{n-i-1}C_{s,s}^{1,2}(P_s^1P_s^2)^{i}=R_n.\quad \qedhere
	\end{array}
	\end{align}\end{proof}

Next we prove that the limit of the switching scheme does not depend on the order of switched semigroups in the product formula. 
\begin{prop}Let  $(P^1_t)_{t\geq 0}$ and $(P^2_t)_{t\geq 0}$  be semigroups of Markov operators for which Assumptions \ref{Assumption_equicontinuity_tightness}-\ref{Assumption_4_extended_commutator} hold and additionally, that Assumption \ref{Assumption_stability} holds for $(P^1_t)_{t\geq 0}$ and $(P^2_t)_{t\geq 0}$ swapped. Let $\mu\in\MM^+(S)$. Then 
		\[\lim_{n\to\infty}\left[P^1_{\frac{t}{n}}P^2_{\frac{t}{n}}\right]^n\mu=\lim_{n\to\infty}\left[P^2_{\frac{t}{n}}P^1_{\frac{t}{n}}\right]^n\mu\]
	\end{prop}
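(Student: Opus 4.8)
The plan is to prove the two Lie--Trotter limits agree by a telescoping estimate based on Lemma~\ref{lemma_swapping_semigroups}, which rewrites the difference of the two $n$-fold products as a sum of $n$ commutators that can be controlled by the Commutator and Extended Commutator Conditions.

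First I would record that both limits exist. Theorem~\ref{main} for the ordering $(P^1,P^2)$ already gives $\overline{\mathbb{P}}_t\mu=\lim_n[P^1_{t/n}P^2_{t/n}]^n\mu$, and applying Theorem~\ref{main} to the swapped ordering $(P^2,P^1)$ gives $\widetilde{\mathbb{P}}_t\mu:=\lim_n[P^2_{t/n}P^1_{t/n}]^n\mu$. This is legitimate because Assumptions~\ref{Assumption_equicontinuity_tightness}, \ref{Assumption_commutator}, \ref{Assumption_4_extended_commutator} are symmetric under interchanging the two semigroups once the auxiliary objects $\mathcal{E}(f)$, $d_{\mathcal{E}(f)}$, $\omega_f$, $C_f(\mu_0)$ are relabelled accordingly (the commutator $P^1_tP^2_t\mu_0-P^2_tP^1_t\mu_0$ only changes sign, so its norm is unaffected), while the single non-symmetric ingredient, equicontinuity of the switched products (Assumption~\ref{Assumption_stability}), is exactly the extra hypothesis of the proposition. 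Both $\overline{\mathbb{P}}_t$ and $\widetilde{\mathbb{P}}_t$ are continuous on $\MM^+(S)_{\BL}$ (Proposition~\ref{prop_Feller_limit}) and satisfy the rational semigroup law (Proposition~\ref{prop_semigroup_k}); together with density of $M_0$ this reduces the claim to showing, for each $f\in\BL(S,d)$ and $\mu_0\in M_0$, that $\langle[P^1_{t/n}P^2_{t/n}]^n\mu_0-[P^2_{t/n}P^1_{t/n}]^n\mu_0,f\rangle\to0$ for $t$ in a suitable interval $[0,\delta_f]$, and then performing the usual passage to all $\mu\in\MM^+(S)$ and all $t\ge0$ as in Section~\ref{proof_main}.

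For the core estimate, fix $f$, $\mu_0\in M_0$, put $\delta_f:=\min(\delta_1,\delta_2,\delta_{3,f},\delta_{4,f})$, take $t\in[0,\delta_f]$ and choose $n$ large so that $s:=t/n\le\delta_f$. Lemma~\ref{lemma_swapping_semigroups} yields
\[
[P^1_sP^2_s]^n\mu_0-[P^2_sP^1_s]^n\mu_0=\sum_{i=0}^{n-1}(P^2_sP^1_s)^{\,n-1-i}\bigl(P^1_sP^2_s-P^2_sP^1_s\bigr)(P^1_sP^2_s)^i\mu_0 .
\]
Pairing with $f$ and moving the outer factor onto the test function by duality, the $i$-th term becomes $\langle(P^1_sP^2_s-P^2_sP^1_s)\mu_{0,i},g_{n,i}\rangle$, where $\mu_{0,i}:=(P^1_sP^2_s)^i\mu_0\in M_0$ (by $(P^i_t)$-invariance of $M_0$) and $g_{n,i}$ is the image of $f$ under the dual of $(P^2_sP^1_s)^{n-1-i}$. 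By Assumption~\ref{Assumption_stability} for the switched ordering and Theorem~\ref{Thrm_equivalence}, the family $\{g_{n,i}\}$ is equicontinuous, hence — after normalisation — lies in the unit ball of $\BL(S,\cdot)$ for an admissible metric; choosing the metric in the Commutator Condition to dominate it, one gets $|\langle(P^1_sP^2_s-P^2_sP^1_s)\mu_{0,i},g_{n,i}\rangle|\le 2s\,\omega_f(s,\mu_{0,i})$. Since $t\le\delta_f$, the operator $(P^1_sP^2_s)^i$ is of the form $[P^1_{\tau/i}P^2_{\tau/i}]^i$ with $\tau=is\le(n-1)s<t\le\delta_f$, so it lies in $\mathcal{F}(\delta_f)\subset\mathcal{P}^2(\delta_f)\cdot\mathcal{F}(\delta_f)\cdot\mathcal{P}^1(\delta_f)$, and the Extended Commutator Condition (Assumption~\ref{Assumption_4_extended_commutator}) gives $\omega_f(s,\mu_{0,i})\le C_f(\mu_0)\,\omega_f(s,\mu_0)$ uniformly in $i$. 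Summing the $n$ terms,
\[
\left|\left\langle[P^1_{t/n}P^2_{t/n}]^n\mu_0-[P^2_{t/n}P^1_{t/n}]^n\mu_0,\,f\right\rangle\right|\le 2t\,C_f(\mu_0)\,\omega_f\!\left(\tfrac{t}{n},\mu_0\right),
\]
and the right-hand side tends to $0$ as $n\to\infty$ since $\omega_f(s,\mu_0)\downarrow0$ as $s\downarrow0$ (Lemma~\ref{lemma_omega}). Letting $n\to\infty$ and using the $\|\cdot\|^*_{\BL,d}$-convergence of both products (Theorem~\ref{main}) gives $\langle\overline{\mathbb{P}}_t\mu_0,f\rangle=\langle\widetilde{\mathbb{P}}_t\mu_0,f\rangle$, which by the reductions above yields $\overline{\mathbb{P}}_t=\widetilde{\mathbb{P}}_t$ on $\MM^+(S)$ for all $t\ge0$.

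I expect the main obstacle to be the commutator bound in the core estimate: one must arrange that a single admissible metric controls simultaneously the commutator $P^1_sP^2_s\mu_0-P^2_sP^1_s\mu_0$ (as demanded by the Commutator Condition) and the test functions $g_{n,i}$ produced by the switched product — which is precisely why one works with the symmetrised auxiliary family containing both orderings of the dual operators $U^1,U^2$ — and that the Extended Commutator Condition is applied only to intermediate operators $(P^1_sP^2_s)^i$ whose time parameters stay in the admissible range, here secured by restricting to $t\le\delta_f$ and pushing the remaining $t\ge0$ back via the semigroup property, exactly as in the proof of Theorem~\ref{main}. The other ingredients — the telescoping identity, the Dini-type summation, and the passage from $M_0$ to $\MM^+(S)$ — are routine given Lemmas~\ref{lemma_swapping_semigroups} and~\ref{lemma_omega}, Theorems~\ref{Thrm_equivalence} and~\ref{main}, and Propositions~\ref{prop_Feller_limit} and~\ref{prop_semigroup_k}.
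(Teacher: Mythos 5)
Your proposal follows essentially the same route as the paper: telescope the difference with Lemma \ref{lemma_swapping_semigroups}, dualise the outer product onto $f$, bound each commutator term via Assumptions \ref{Assumption_commutator}--\ref{Assumption_4_extended_commutator}, let $\omega_f(t/n,\mu_0)\to 0$, upgrade weak to norm convergence with Theorem \ref{weak_implies_strong}, and extend from $M_0$ to $\MM^+(S)$ by density and the continuity of both limit operators (which is where the swapped Assumption \ref{Assumption_stability} enters).

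There is, however, one concrete misstep in the core estimate. You take the form $\sum_{i}(P^2_sP^1_s)^{n-1-i}\,C\,(P^1_sP^2_s)^i$ of the identity, so the dualised test functions are $g_{n,i}=[U^1_sU^2_s]^{n-1-i}f$. These do \emph{not} belong to $\mathcal{E}(f)$ as defined in (\ref{def_E}) (whose elements have the ordering $U^2_sU^1_{s'}[U^2U^1]^n f$), so the bound $\|g_{n,i}\|_{\BL,d_{\mathcal{E}(f)}}\leq \mathrm{const}$ needed to invoke Assumption \ref{Assumption_commutator} is not available; your remedy of "symmetrising the auxiliary family" amounts to silently strengthening that assumption. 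The fix is already in Lemma \ref{lemma_swapping_semigroups}: use the other form, $\sum_i (P^1_sP^2_s)^{n-i-1}\,C\,(P^2_sP^1_s)^i$, which is what the paper does. Then the test functions $[U^2_sU^1_s]^{n-i-1}f$ lie in $\mathcal{E}(f)$, and the intermediate measures $[P^2_sP^1_s]^i\mu_0=P^2_s[P^1_sP^2_s]^{i-1}P^1_s\mu_0$ lie precisely in $\mathcal{P}^2(\delta)\cdot\mathcal{F}(\delta)\cdot\mathcal{P}^1(\delta)$, as Assumption \ref{Assumption_4_extended_commutator} requires. A lesser point: your opening claim that Theorem \ref{main} applies verbatim to the swapped ordering because the assumptions are "symmetric after relabelling" is not justified --- the commutator assumptions are tied to the ordering through $\mathcal{E}(f)$ and through the set $\mathcal{P}^2\cdot\mathcal{F}\cdot\mathcal{P}^1$ --- but it is also not needed: convergence of the swapped products follows from convergence of the unswapped ones together with the vanishing of the difference, and the swapped Assumption \ref{Assumption_stability} is only used for the passage from $M_0$ to all of $\MM^+(S)$.
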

\begin{proof}
	Let $t\in \RR_+$, $\mu_0\in M_0$, $f\in \BL(S,d)$ and fix $\eps>0$. There exists $N\in\NN$ such that $\frac{t}{N}\leq \delta$, where $\delta=\min(\delta_{3,f},\delta_{4,f})$. Since $(P^1_t)_{t\geq 0}$ and $(P^2_t)_{t\geq 0}$ are equicontinuous, they consist of Feller operators necessarily. According to Lemma \ref{lemma_swapping_semigroups}, for $n\geq N$
	\begin{equation}
\nonumber
\begin{array}{llll}
&\left|\left\langle \left[P_\frac{t}{n}^1P_\frac{t}{n}^2\right]^n\mu_0-\left[P_\frac{t}{n}^2P_\frac{t}{n}^1\right]^n\mu_0,f\right\rangle\right|&=&\left|\left\langle \sum_{i=0}^{n-1}\left[P_\frac{t}{n}^1P_\frac{t}{n}^2\right]^{n-i-1}C_{\frac{t}{n},\frac{t}{n}}^{1,2}\left[P_\frac{t}{n}^2P_\frac{t}{n}^1\right]^i\mu_0,f\right\rangle\right|\\
&&\leq&\sum_{i=0}^{n-1}\left|\left\langle C_{\frac{t}{n},\frac{t}{n}}^{1,2}\left[P_\frac{t}{n}^2P_\frac{t}{n}^1\right]^i\mu_0,\left[U_\frac{t}{n}^2U_\frac{t}{n}^1\right]^{n-i-1}f\right\rangle\right|\\
&&\leq &\sum_{i=0}^{n-1}\left\|C_{\frac{t}{n},\frac{t}{n}}^{1,2}\left[P_\frac{t}{n}^2P_\frac{t}{n}^1\right]^i\mu_0\right\|_{\BL,d_{\mathcal{E}(f)}}^*\cdot\left\|\left[U_\frac{t}{n}^2U_\frac{t}{n}^1\right]^{n-i-1}f\right\|_{\BL,d_{\mathcal{E}(f)}}\\
&&\leq&\sum_{i=0}^{n-1}\frac{t}{n}\omega_f\left(\frac{t}{n},\left[P_\frac{t}{n}^2P_\frac{t}{n}^1\right]^i\mu_0\right)
\\
&&\leq&C_f(\mu_0)t\omega_f\left(\frac{t}{n},\mu_0\right),
\end{array}
\end{equation}	
because $\left[U_\frac{t}{n}^2U_\frac{t}{n}^1\right]^{n-i-1}f\in\mathcal{E}(f)$. 

As $t$ is fixed and $\lim_{s\to 0}\omega_f(s,\mu_0)=0$, we obtain for every $f\in \BL(S,d)$ and $\mu_0\in M_0$ 
	\begin{equation}
\nonumber
\begin{array}{llll}
&\lim_{n\to\infty}\left|\left\langle \left[P_\frac{t}{n}^1P_\frac{t}{n}^2\right]^n\mu_0-\left[P_\frac{t}{n}^2P_\frac{t}{n}^1\right]^n\mu_0,f\right\rangle\right|&=&0\
\end{array}
\end{equation}
 Then, by Theorem \ref{weak_implies_strong}, it also converges in norm. Hence, 
\[\left\|\left[P_\frac{t}{n}^1P_\frac{t}{n}^2\right]^n\mu_0-\left[P_\frac{t}{n}^2P_\frac{t}{n}^1\right]^n\mu_0\right\|_{\BL}^*\to 0\text{ as }n\to \infty.\]
Define $\hat{\mathbb{P}}_t\mu:=\lim_{n\to\infty}\left[P^2_{\frac{t}{n}}P^1_{\frac{t}{n}}\right]^n\mu$, for $\mu\in \MM^+(S)$. Since by assumption Assumption \ref{Assumption_stability} holds witj $P^1_t$ ans $P^2_t$ swapped, Proposition \ref{prop_Feller_limit} holds for $\hat{\mathbb{P}}_t$ as well: both $\mathbb{\overline{P}}_t$ and $\hat{\mathbb{P}}_t$ are continuous on $\MM^+(S)$. Since $M_0$ is a dense subset of $\MM^+(S)_{\BL}$ and $\mathbb{\overline{P}}_t\mu_0=\hat{\mathbb{P}}_t\mu_0$ for $\mu_0\in M_0$, we obtain $\mathbb{\overline{P}}_t=\hat{\mathbb{P}}_t$ on $\MM^+(S)$. 
\end{proof}
\section{Relation to literature}
\label{section relations to literature}
We shall now show that Theorem \ref{main} is a generalization of existing results. We start with K\"uhnemund and Wacker \cite{Kuhnemund_Wacker} approach and show in details that their result follows from Theorem \ref{main}. Then we provide proof that also the Proposition 3.5 in Colombo-Guerra \cite{Colombo_2009} follows from Theorem \ref{main}. 
\subsection{K\"uhnemund-Wacker}
\label{KW_case}
K\"uhnemund and Wacker \cite{Kuhnemund_Wacker} provided conditions for $C_0$-semigroups that ensure convergence of the Lie-Trotter product. Their setting is the following:

 Let $(T(t))_{t\geq 0}, (S(t))_{t\geq 0}$  be strongly continuous linear semigroups on
a Banach space $(E,\|\cdot\|)$ consists of bounded linear operators.
Let $F\subset E$ be a dense linear subspace, equipped with a norm $\||\cdot\||$, such that both  $(T(t))_{t\geq 0}$ and $ (S(t))_{t\geq 0}$ leave $F$ invariant.
\begin{assumptionkw}\label{KWA1}
	$(T(t))_{t\geq 0}$ and $(S(t))_{t\geq 0}$ are \textbf{exponentially bounded} on $(F,\||\cdot\||)$, so there exist $M_T, M_S\geq 1$, and $\omega_T, \omega_S\in\RR$ such that\[\||T(t)\||\leq M_Te^{\omega_T t}, \quad \||S(t)\||\leq M_Se^{\omega_S t}\] for all $t\geq 0$.
\end{assumptionkw}
\begin{assumptionkw}\label{KWA2}$(T(t))_{t\geq 0}$ and $(S(t))_{t\geq 0}$ are \textbf{locally Trotter stable} on both $(E,\|\cdot\|)$ and $(F,\||\cdot\||)$. There exists $ \delta>0$ and $ M_E^\delta, M_F^\delta\geq 1$ such that 
	\[\left\|\left[T\left(\mbox{$\frac{t}{n}$}\right)S\left(\mbox{$\frac{t}{n}$}\right)\right]^n\right\|\leq M_E^\delta\]
	\[\left\|\left|\left[T\left(\mbox{$\frac{t}{n}$}\right)S\left(\mbox{$\frac{t}{n}$}\right)\right]^n\right\|\right|\leq M_F^\delta\]	for all $t\in[0,\delta]$ and $n\in\NN$.
\end{assumptionkw}
\begin{assumptionkw}\label{KWA3} (Commutator condition)
	There exists $\alpha>1$, $\delta'>0$ and $M_1\geq 0$ such that\[\left\|T(t)S(t)f-S(t)T(t)f\right\|\leq M_1t^\alpha \||f\||\] for all $f\in F$, $t\in[0,\delta]$.
\end{assumptionkw}
\begin{thrm} 	[K\"uhnemund and Wacker, \cite{Kuhnemund_Wacker}, Theorem 1]
	\label{Theorem_1_KW} 
 Let $(T(t))_{t\geq 0}$ and $(S(t))_{t\geq 0}$ be strongly continuous semigroups satisfying Assumptions $\mathrm{KW}\ref{KWA1}$-$\mathrm{KW}\ref{KWA3}$. Then the Lie-Trotter product formula holds, i.e.
 \[\mathbb{P}_tx:=\lim_{n\to\infty}\left[T\left(\mbox{$\frac{t}{n}$}\right)S\left(\mbox{$\frac{t}{n}$}\right)\right]^nx\] exists in $(E,\|\cdot\|)$ for every $x\in X$, and convergence is uniform for every $t$ in compact intervals in $\RR_+$. Moreover, $(\mathbb{P}(t))_{t\geq 0}$ is a strongly continuous semigroup in $E$.
\end{thrm}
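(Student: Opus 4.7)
The plan is to derive Theorem \ref{Theorem_1_KW} as a specialization of Theorem \ref{main} via the ``deterministic lift'' construction. Assume without loss of generality that $E$ is separable (by restricting to the closed separable subspace generated by the orbit of the initial datum, which is invariant under $T$ and $S$) and set $S := E$ as a Polish space with admissible metric $d(x,y) := \|x-y\|_E \wedge 1$. Define $P^1_t\mu := T(t)_{\#}\mu$ and $P^2_t\mu := S(t)_{\#}\mu$; these are Markov--Feller semigroups on $\MM^+(E)$ with dual semigroups $U^1_t g = g \circ T(t)$ and $U^2_t g = g\circ S(t)$. Choose $M_0$ to be the convex subcone of $\MM^+(E)$ generated by $\{\delta_x : x \in F\}$; this is invariant under both $P^1_t$ and $P^2_t$ since $F$ is invariant under $T(t), S(t)$, and $\|\cdot\|^*_{\BL,d}$-dense in $\MM^+(E)$ because $F$ is dense in $E$ and positive convex combinations of Diracs are weakly dense in $\MM^+(E)$.

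To verify Assumption \ref{Assumption_equicontinuity_tightness}, note that the $C_0$-semigroup $T$ on $E$ automatically satisfies $\|T(t)\|_E \leq M_E e^{\omega_E t}$, hence $|g \circ T(t)|_{L,d} \leq M_E e^{\omega_E \delta_1}|g|_{L,d}$ for $t \in [0,\delta_1]$, giving equicontinuity of $\mathcal{P}^1(\delta_1)$ by Theorem \ref{Thrm_equivalence}. Tightness follows because for compact $K \subset E$, continuity of $(t,x)\mapsto T(t)x$ on $[0,\delta_1]\times K$ makes $\{T(t)x : t\in[0,\delta_1], x\in K\}$ compact. The same argument handles $\mathcal{P}^2$. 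For Assumption \ref{Assumption_stability}, the $E$-Trotter-stability $\|[T(t/n)S(t/n)]^n\|_E \leq M_E^\delta$ from KW\ref{KWA2} gives a uniform Lipschitz bound on the dual operators, hence equicontinuity of $\mathcal{F}(\delta_2)$.

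The main obstacle is Assumption \ref{Assumption_commutator}. Fix $f \in \BL(E,d)$; each $g \in \mathcal{E}(f)$ has the form $g(x) = f\bigl((T(t/n)S(t/n))^n T(s')S(s)x\bigr)$, and the $E$-operator bounds from KW\ref{KWA1} and KW\ref{KWA2} give $|g|_{L,d}\leq C|f|_{L,d}$ uniformly in $n, s, s', t$, hence $d_{\mathcal{E}(f)}(x,y)\leq (1\vee C|f|_{L,d})(\|x-y\|_E\wedge 1)$. Test functions $h \in \BL(E, d_{\mathcal{E}(f)})$ of unit norm then satisfy $|h(x)-h(y)|\leq (1\vee C|f|_{L,d})(\|x-y\|_E\wedge 1)$, so for $\mu_0 = \delta_x$ with $x \in F$, KW\ref{KWA3} yields
\[
\left\|P^1_tP^2_t\delta_x - P^2_tP^1_t\delta_x\right\|^*_{\BL,d_{\mathcal{E}(f)}} \leq (1\vee C|f|_{L,d})\,\|T(t)S(t)x - S(t)T(t)x\|_E \leq C'M_1\,t^\alpha\,\||x\||_F
\]
with $C' := 1 \vee C|f|_{L,d}$. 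Setting $\omega_f(t,\delta_x) := C'M_1 t^{\alpha-1}\||x\||_F$, extended $\RR_+$-homogeneously and additively to $M_0$, gives the required bound; since $\alpha>1$, the Dini integral $\int_0^{\delta_{3,f}}\omega_f(s,\mu_0)/s\,ds<\infty$ converges. Assumption \ref{Assumption_4_extended_commutator} follows from KW\ref{KWA1} and KW\ref{KWA2}: every $P$ in $\mathcal{P}^2\cdot\mathcal{F}\cdot\mathcal{P}^1$ acts on $\delta_x$ as $\delta_{Ax}$ with $\||A\||_F \leq M_TM_SM_F^\delta e^{(\omega_T+\omega_S)\delta}=:\kappa$, so $C_f(\mu_0) := \kappa$ works uniformly in $\mu_0$.

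Theorem \ref{main} then provides the limit $\overline{\mathbb{P}}_t$; since $\omega_f$, $C_f$, $\delta_{3,f}$, $\delta_{4,f}$ can all be chosen uniformly for $f$ in the BL unit ball (the only $f$-dependence entering through $|f|_{L,d}\leq 1$), convergence is uniform for $t$ in compact subsets of $\RR_+$. Specialization to $\mu = \delta_x$ recovers the $E$-norm convergence of $[T(t/n)S(t/n)]^n x$ to some $\mathbb{P}_tx$ via $\|\cdot\|_E \wedge 1$ plus the $E$-Trotter-stability bound; linearity of $\mathbb{P}_t$ is inherited from the linear approximants by passing to the limit, and the semigroup property and strong continuity of $(\mathbb{P}_t)_{t\geq 0}$ follow from Propositions \ref{prop_semigroup_k} and \ref{Proposition_semigroup} applied to the lifts (using that $T, S$ are strongly continuous, so $t \mapsto P^i_t\delta_x$ is $\|\cdot\|^*_{\BL,d}$-continuous). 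The hardest step will be the bookkeeping in the commutator argument: controlling $d_{\mathcal{E}(f)}$ by a multiple of the truncated $E$-metric uniformly over $g \in \mathcal{E}(f)$, while also ensuring that the resulting multiplicative constant is absorbed cleanly into an $\omega_f$ that remains uniform for $f$ in the BL unit ball.
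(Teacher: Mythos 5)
Your proposal is correct and follows essentially the same route as the paper's Section \ref{KW_case}: restrict to the separable invariant subspace, lift $T$ and $S$ to pushforward Markov semigroups, take $M_0$ to be positive combinations of Diracs over $F$ with the weight $|\mu_0|_{M_0}=\int_F\||\phi\||\,\mu_0(d\phi)$, control $d_{\mathcal{E}(f)}$ by a multiple of the norm metric using KW\ref{KWA1}--KW\ref{KWA2}, set $\omega_f(t,\mu_0)\sim t^{\alpha-1}|\mu_0|_{M_0}$ so that $\alpha>1$ gives the Dini condition, and verify the extended commutator condition with a constant $C_f(\mu_0)=M_TM_SM_F^\delta e^{(\omega_T+\omega_S)\delta}$ before invoking Theorem \ref{main} and specializing to Dirac measures (using closedness of the Diracs in $\MM^+(S)_{\BL}$ to return to $E$). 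The only deviations are cosmetic (the truncated metric $\|x-y\|\wedge 1$ instead of $\|x-y\|$, and a slightly more direct tightness argument via compactness of $T([0,\delta_1])K$).
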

We shall now show that Theorem \ref{Theorem_1_KW} follows from our result. Note that in Theorem \ref{Theorem_1_KW} there is no assumption that $(E,\|\cdot\|)$ should be separable, while we assume that $(S,d)$ is separable. This issue can be overcome as follows.

Fix $x\in E$. Define $T_t^1:=T(t)$, $T^2_t:=S(t)$ and \[E_x=\mathrm{Cl}_{E}\left(\mathrm{span}_\RR\left\{T_{t_N}^{i_N}\cdot T_{t_{N-1}}^{i_{N-1}}\cdot\ddots\cdot T_{t_1}^{i_1}:N\in\NN, i_k\in\{1,2\}, k=1,2,\cdots, N\right\}\right).\]
Then $E_x\subset E$ is the smallest separable closed subspace that contains $x$ and is both $(T(t))_{t\geq 0}$ and $(S(t))_{t\geq 0}$-invariant. Let $S=E_x$ with metric $d(y,y'):=\|y-y'\|$. Then $(S,d)$ is separable and complete.
\subsubsection{Lifts}
	Let $(P^1_t)_{t\geq 0}$ be the lift of $T(t)$ to $\MM^+(S)$ and $(P^2_t)_{t\geq 0}$ be the lift of $S(t)$ to $\MM^+(S)$. That is, for $\mu\in\MM^+(S)$,
	\begin{equation}
\label{lifts_KW_2}
P^1_t\mu:=\int_S \delta_{T(t)x}\mu(dx),\quad P^2_t\mu:=\int_S \delta_{S(t)x}\mu(dx),
\end{equation}
where the integrals are considered as Bochner integrals in $\overline{\MM(S)}_{\mathrm{BL}}$, the closure of $\MM(S)_{\mathrm{BL}}$ in $\BL(S,d)^*$. Since $\MM^+(S)\subset \overline{\MM(S)}_{\mathrm{BL}}$ is closed, $P_t^i\mu\in\MM^+(S)$.
So
	\begin{equation}
	\label{lifts_KW_1}
	P^1_t\delta_x:=\delta_{T(t)x}, \quad P^2_t\delta_x:=\delta_{S(t)x}.
	\end{equation}
We show that $(P^i_t)_{\geq 0}$, $i=1,2$, defined by (\ref{lifts_KW_2}) satisfy Assumptions \ref{Assumption_equicontinuity_tightness}-\ref{Assumption_4_extended_commutator}.

First consider Assumption \ref{Assumption_equicontinuity_tightness}. We discuss $(P_t^1)_{t\geq 0}$ only; the argument for $(P^2_t)_{t\geq 0}$ is similar. The map $t\mapsto P_t^1\mu:\RR_+\to\MM^+(S)_{\BL}$ is continuous iff $t\mapsto \langle P_t^1\mu,f\rangle$ is continuous for every $f\in C_b(S)$. Clearly, $\langle P_t^1\mu,f\rangle=\int_S \langle \delta_{T(t)x},f\rangle \mu(dx)=\int_S f(T(t)x)\mu (dx)$. Using the strong continuity of $(T(t))_{t\geq 0}$ and Lebesgue's Dominated Convergence Theorem we see that $t\mapsto \langle P_t^1\mu, f\rangle$ is indeed continuous on $\RR_+$. Thus, $\{P_t^1\mu:t\in [0,\delta]\}$ is compact in $\MM^+(S)_{\BL}$, that is: tight. 

	Let $\phi\in \BL(S,d)$ and $x_0\in S$. Let $U^1_t$ be dual operators to $P^1_t$. Then:
\begin{equation}
\nonumber
\begin{array}{llll}
&|U^1_t\phi(x)-U^1_t\phi(x_0)|=|\langle P^1_t\delta_x-P^1_t\delta_{x_0},\phi\rangle|\\
&\underbrace{=}_{def}|\langle \delta_{T(t)x}-\delta_{T(t)x_0},\phi\rangle|=
|\phi(\delta_{T(t)x})-\phi(\delta_{T(t)x_0})|
\underbrace{\leq}_{\phi\in \BL(S,d)} |\phi|_L\cdot\|T(t)x-T(t)x_0\|\\
&\leq |\phi|_L\cdot\|T(t)\|\cdot\|x-x_0\|\underbrace{\leq}_{KW\ref{KWA1}}|\phi|_l\cdot M_Te^{\omega_Tt}\cdot\|x-x_0\|
\end{array}
\end{equation} 
So there exists $\delta_T$ such that $\{U^1_t\phi:t\in[0,\delta_T]\}$ is equicontinuous in $C_b(S)$.
Hence, ${\{P^1_t:t\in[0,\delta_T]\}}$ forms an equicontinuous family, according to Theorem \ref{Thrm_equivalence}. 

Stability condition in Assumption \ref{Assumption_stability} can be shown as follows. 
Let $\phi\in \BL(S,d)$, $x_0\in S$.
\begin{equation}
\nonumber
\begin{array}{llll}
&\left|\left[U^2_{\frac{t}{n}}U^1_{\frac{t}{n}}\right]^n\phi(x)-\left[U^2_{\frac{t}{n}}U^1_{\frac{t}{n}}\right]^n\phi(x_0)\right|&=&\left|\left\langle\delta_x-\delta_{x_0}, \left[U^2_{\frac{t}{n}}U^1_{\frac{t}{n}}\right]^n\phi\right\rangle\right|\\
&&=&\left|\left\langle\left[P^1_{\frac{t}{n}}P^2_{\frac{t}{n}}\right]^n\delta_x-\left[P^1_{\frac{t}{n}}P^2_{\frac{t}{n}}\right]^n\delta_{x_0}, \phi\right\rangle\right|\\
&&=&\left|\left\langle\delta_{[T(\frac{t}{n})S(\frac{t}{n})]^nx}-\delta_{[T(\frac{t}{n})S(\frac{t}{n})]^nx_0}, \phi\right\rangle\right|\\
&&=&\left|\phi\left[T\left(\frac{t}{n}\right)S\left(\frac{t}{n}\right)\right]^nx-\phi\left[T\left(\frac{t}{n}\right)S\left(\frac{t}{n}\right)\right]^nx_0\right|\\
&&\leq&|\phi|_L\left\|\left[T\left(\frac{t}{n}\right)S\left(\frac{t}{n}\right)\right]^n(x-x_0)\right\|\\
&&\leq& |\phi|_L\cdot\left\|\left[T\left(\frac{t}{n}\right)S\left(\frac{t}{n}\right)\right]^n\right\|\cdot\|x-x_0\|\\
&&\leq&|\phi|_L\cdot M_E^\delta\cdot\|x-x_0\|
\end{array}
\end{equation}
by Assumption KW\ref{KWA3}, for $t\in[0,\delta]$, $n\in\NN$. Theorem \ref{Thrm_equivalence} again implies equicontinuity of $\mathcal{F}(\delta)$. 

	Let $\phi\in F\subset E$. We define 
	\[M_0:=\mathrm{span}_{\RR_+} \{\delta_\phi\,|\,\phi\in F\}\subset \mathcal{M}^+(S).\] Then $M_0$ is dense in $\MM^+(S)$ and $(P^i_t)_{t\geq 0}$-invariant, $i=1,2$.\\
	Moreover, define \begin{equation}
	\label{M0norm}
	|\mu_0|_{M_0}:=\int_F\||\phi\||\mu_0(d\phi).
	\end{equation} So
	\[\left|\sum_{k=1}^N a_k\delta_{\phi_k}\right|_{M_0}=\sum_{k=1}^N a_k\||\phi_k\||\]

To check Commutator Condition in Assumption \ref{Assumption_commutator}, let $f\in \BL(S,d)$ and $\mu_0\in M_0$. We define a new admissible metric $d_{\mathcal{E}(f)}$ as in (\ref{new_metric}). Then for $y,y'\in E_x=S$, \[d_{\mathcal{E}(f)}(y,y')=\|y-y'\|\vee \sup_{g\in\mathcal {E}(f)}|h(y)-h(y')|.\] For $h\in \mathcal{E}(f)$ there exist $s,s'$ and $t\in [0,\delta]$, with $\delta=\min(\delta_1,\delta_2)$, such that
\begin{align*}
\nonumber
|h(y)-h(y')|&=\left|f\left(\left[T\left(\mbox{$\frac{t}{n}$}\right)S\left(\mbox{$\frac{t}{n}$}\right)\right]^nT(s')S(s)y\right)-f\left(\left[T\left(\mbox{$\frac{t}{n}$}\right)S\left(\mbox{$\frac{t}{n}$}\right)\right]^nT(s')S(s)y'\right)\right|\\
&\leq |f|_{L,d}\cdot\left\|\left[T\left(\mbox{$\frac{t}{n}$}\right)S\left(\mbox{$\frac{t}{n}$}\right)\right]^nT(s')S(s)\right\|\cdot\|y-y'\|\\
&\leq M\cdot|f|_{L,d}\cdot \|y-y'\|
\end{align*}
for some constant $M>0$, according to Assumptions KW\ref{Assumption_equicontinuity_tightness}-\ref{Assumption_stability}.
	\begin{equation}
	\nonumber
	\left\|P^1_tP^2_t\mu_0-P^2_tP^1_t\mu_0\right\|^*_{\BL,d_{\mathcal{E}(f)}}\leq \int_S	\left\|P^1_tP^2_t\delta_\phi-P^2_tP^1_t\delta_\phi\right\|^*_{\BL,d_{\mathcal{E}(f)}}\mu_0(d\phi)
	\end{equation}
Let $B_{\mathcal{E}(f)}$ be the unit ball in $\BL(S,d_{\mathcal{E}(f)})$ for $\|\cdot\|_{\BL,d_{\mathcal{E}(f)}}$. 
		By the Commutator Condition KW\ref{KWA3} we get the following:
		\begin{equation}
		\nonumber
		\begin{array}{llll}
&	\left\|P^1_tP^2_t\delta_\phi-P^2_tP^1_t\delta_\phi\right\|^*_{\BL,d_{\mathcal{E}(f)}}&=&\sup_{g\in B_{\mathcal{E}(f)}}\left|g(T(t)S(t)\phi)-g(S(t)T(t)\phi)\right|\\
&&\leq&\sup_{g\in B_{\mathcal{E}(f)}}|g|_{L,d_{\mathcal{E}(f) }}\cdot d_{\mathcal{E}(f)}\left(T(t)S(t)\phi,S(t)T(t)\phi\right)\\
&&\leq& \max(1,|f|_{L,d}M)\|T(t)S(t)\phi-S(t)T(t)\phi\|\\
&&\leq& \max(1,|f|_{L,d}M)M_1t^\alpha\||\phi\||.
\end{array}
\end{equation}
Define
\[\omega_f(t,\mu_0):=\max(1,|f|_{L,d}M)M_1t^{\alpha-1}|\mu_0|_{M_0}.\]
Since $\alpha>1$,
$\omega_f:\RR_+\times M_0\to\RR_+$ is continuous, non-decreasing and for every $\delta>0$
\begin{align*}\int_0^\delta\frac{\omega_f(t,\mu_0)}{t}dt&=\max(1,|f|_{L,d}M)|\mu_0|_{M_0}M_1\int_0^\delta t^{\alpha-2}dt\\
&=\max(1,|f|_{L,d}M)M_1\frac{\delta^{\alpha -1}}{\alpha-1}<+\infty.\end{align*}
Moreover, for $\mu_0\in M_0$,
\begin{align*}
\nonumber
\left\|P^1_tP^2_t\mu_0-P^2_tP^1_t\mu_0\right\|^*_{\BL,d_{\mathcal{E}(f)}}&\leq \int_S\left\|P^1_tP^2_t\delta_\phi-P^2_tP^1_t\delta_\phi\right\|^*_{\BL,d_{\mathcal{E}(f)}}\mu_0(d\phi)\\
&\leq\max\left(1,|f|_{L,d}M\right)M_1 t^{\alpha-1}\int_S\||\phi\||\mu_0(d\phi)\\
&=t\omega_f(t,\mu_0).
\end{align*}
Hence, we get Assumption \ref{Assumption_commutator} for all $\mu_0\in  M_0$ and $\delta_{3,f}=\delta'$.

Let us now check Assumption \ref{Assumption_4_extended_commutator}. First, for any $\phi\in F$,
\begin{equation}
\nonumber
\left|\left[P^1_{\frac{t}{n}}P^2_{\frac{t}{n}}\right]^n\delta_{\phi}\right|_{M_0}=\left|\delta_{\left[T(\frac{t}{n})S(\frac{t}{n})\right]^n\phi}\right|_{M_0}=\left|\left\|\left[T\left(\mbox{$\frac{t}{n}$}\right)S\left(\mbox{$\frac{t}{n}$}\right)\right]^n\phi\right\|\right|\leq M_F^\delta\||\phi\||
\end{equation}
For $\mu_0\in M_0$ we get
\begin{equation}
\begin{array}{llll}
\nonumber
&\left|\left[P^1_{\frac{t}{n}}P^2_{\frac{t}{n}}\right]^n\mu_0\right|_{M_0}&=&\left|\left[P^1_{\frac{t}{n}}P^2_{\frac{t}{n}}\right]^n\left(\mathlarger{\sum_k}a_k\delta_{\phi_k}\right)\right|_{M_0}\\
&&=&\left|\mathlarger{\sum_k}a_k\delta_{\left[T(\frac{t}{n})S(\frac{t}{n})\right]^n\phi_k}\right|_{M_0}\\
&&=&\mathlarger{\sum_k}a_k\left|\delta_{\left[T(\frac{t}{n})S(\frac{t}{n})\right]^n\phi_k}\right|_{M_0}\\
&&\leq&\mathlarger{\sum_k}a_kM_F^\delta|\|\phi_k\||\\
&&=&M_F^\delta|\mu_0|_{M_0}.
\end{array}
\end{equation}
Furthermore, 
\begin{equation}
\nonumber
\left|P^1_t\delta_\phi\right|_{M_0}=\left|\delta_{T(t)\phi}\right|_{M_0}=\||T(t)\phi\||\leq M_Te^{\omega_Tt}\||\phi\||\leq M_Te^{\omega_T\delta}\||\phi\||
\end{equation}
and similarly
\begin{equation}
\nonumber
\left|P^2_t\delta_\phi\right|_{M_0}\leq M_Se^{\omega_S\delta}\||\phi\||.
\end{equation}
Then for $0\leq t\leq \delta$
\begin{equation}
\nonumber
\begin{array}{llll}
&\left|P^1_t\mu_0\right|_{M_0}&\leq& M_Te^{\omega_T\delta}|\mu_0|_{M_0}
\end{array}
\end{equation}
and 
\begin{equation}
\nonumber
\begin{array}{llll}
&\left|P^2_t\mu_0\right|_{M_0}&\leq& M_Se^{\omega_T\delta}|\mu_0|_{M_0}.
\end{array}
\end{equation}

Thus, 
\[\left|P^2_s\left[P^1_{\frac{t}{n}}P^2_{\frac{t}{n}}\right]^nP^1_{t'}\mu_0\right|_{M_0}\leq M_TM_SM_F^{\delta}e^{(\omega_T+\omega_s)\delta}\cdot|\mu_0|_{M_0}\]
and with $C_f(\mu_0):=M_TM_SM_F^{\delta}e^{(\omega_T+\omega_s)\delta}$ (independent of $f$ and $\mu_0$) and $\delta_{4,f}=\min(\delta,\delta')$, we see that Assumptions \ref{Assumption_equicontinuity_tightness}-\ref{Assumption_4_extended_commutator} hold. 

Hence, we conclude that the Lie-Trotter formula holds for $(P^i_t)_{t\geq 0}$, $i=1,2$. Moreover, as $\delta_{3,f}$, $\delta_{4,f}$, $C_f(\mu_0)$ and $\omega_f$ can be chosen uniformly for $f$ in the unit ball in $(\BL(S,d),\|\cdot\|_{\BL,d})$, the convergence is uniform in $f$ in compact subsets of $\RR_+$. Furthermore, for every $y\in E_x$,  $\left[P^1_{\frac{t}{n}}P^2_{\frac{t}{n}}\right]^n\delta_y=\delta_{\left[T\left(\frac{t}{n}\right)S\left(\frac{t}{n}\right)\right]^ny}\to \overline{\mathbb{P}}_t\delta_y$ in $\MM^+(S)_{\BL}$ as $n\to\infty$.

	The set of Dirac measures is closed in $\MM^+(S)_{\BL}$. To show this let $(\delta_{x_n})_n$ be a sequence of Dirac measures such that $\delta_{x_n}\to \mu$ for some $\mu\in\MM^+(S)$. Then $(\delta_{x_n})_n$ is a Cauchy sequence, and
\[\|\delta_{x_n}-\delta_{x_m}\|_{BL,d}^*=\frac{2d(x_n,x_m)}{2+d(x_n,x_m)}\]
(cf. \cite{Hille2009_embedding} Lemma 2.5). Then also $(x_n)_{n\in\NN}\subset S$ is a Cauchy sequence. As $S$ is complete, $(x_n)_{n\in\NN}$ is convergent. Hence, there exists $x^*\in S$ such that $x_n\to x^*$ as $n\to \infty$ and
\[\|\delta_{x_n}-\delta_{x^*}\|_{BL,d}^*=\frac{2d(x_n,x^*)}{2+d(x_n,x^*)}\to 0\text{ as } n\to \infty.\]
Hence,  $\overline{\mathbb{P}}_t\delta_y=\delta_{\mathbb{P}_t^xy}$ for a specific $\mathbb{P}_t^x\subset E$ (as in statement Theorem \ref{Theorem_1_KW}). Because the $(P^i_t)_{t\geq 0}$, $i=1,2$,  are strongly continuous in this setting, $(\overline{\mathbb{P}}_t)_{t\geq 0}$ is a semigroup by Proposition \ref{Proposition_semigroup}. Therefore, $(\mathbb{P}_t^x)_{t\geq 0}$ is a strongly continuous semigroup on $E_x$. 
The operators $\mathbb{P}_t$ are linear and continuous:

Let $y_n\in E_x$ such that $Y_n\to y$ in $E$. Then
\begin{align*}
\|\mathbb{P}_t^xy_n-\mathbb{P}_t^xy\|_{\BL,d}^*&=\frac{2\|\delta_{\mathbb{P}_ty_n}-\delta_{\mathbb{P}_ty}\|^*_{\BL,d}}{2+\|\delta_{\mathbb{P}_ty_n}-\delta_{\mathbb{P}_ty}\|^*_{\BL,d}}\\
&=\frac{2\|\overline{\mathbb{P}}_t\delta_{y_n}-\overline{\mathbb{P}}_t\delta_{y}\|_{\BL,d}^*}{2+\|\overline{\mathbb{P}}_t\delta_{y_n}-\overline{\mathbb{P}}_t\delta_{y}\|^*_{\BL,d}}\to 0.
\end{align*}
Moreover, $E=\bigcup _{x\in E}E_x$, and the semigroups $(\mathbb{P}^x_t)_{t\geq 0}$ and $(\mathbb{P}^{x'}_t)_{t\geq 0}$
agree on $E_x\cap E_{x'}$. This allows us to define a strongly continuous semigroup $(\mathbb{P}_t)_{t\geq 0}$ of bounded linear operators on $E$ that agrees with $(\mathbb{P}^x_t)_{t\geq 0}$ on $E_x$. 
\subsection{Colombo-Guerra}

Colombo and Guerra in \cite{Colombo_2009}, generalizing Colombo and Corli \cite{Colombo2004},  also established conditions that ensure the convergence of the Lie-Trotter formula for linear semigroups in a Banach space that do not involve the domains of their generators. Instead, like in the results of K\"uhnemund and Wacker \cite{Kuhnemund_Wacker}, they build on a commutator condition (Assumption CG\ref{CC2commutator} stated below) that is weaker than that in \cite{Kuhnemund_Wacker}. It is this condition that motivated our Assumption \ref{Assumption_commutator}.

The situation in  \cite{Colombo_2009} is as follows. Let $S^1, S^2:\RR_+\times X\mapsto X$ be strongly continuous semigroups on a Banach space $X$. Assume that there exists a normed vector space $Y$ which is densely embedded in $X$ and invariant under both semigroups such that:
\begin{assumptionCC2}
	\label{Assumtpion_CG_1}
	The two semigroups are \textbf{locally Lipschitz in time in $Y$}, i.e. there exists a compact map $K:Y\to\RR$ such that for $i=1,2$
	\[\left\|S_t^1u-S_{t'}^iu\right\|_X\leq K(u)|t-t'|\quad \text{ for all } u\in Y,\,\, t,t'\in I.\]
\end{assumptionCC2}
\begin{assumptionCC2}
		\label{Assumtpion_CG_2}
	The two semigroups are \textbf{exponentially bounded on $F$} and \textbf{locally Trotter stable on $X$ and $Y$}, i.e. there exists a constant $H$ such that for all $t\in [0,1]$, $n\in\NN$
	\[\|S_t^1\|_Y+\|S_t^2\|_Y+\left\|\left(S_{\frac{t}{n}}^1S_{\frac{t}{n}}^2\right)^n\right\|_X+\left\|\left(S_{\frac{t}{n}}^1S_{\frac{t}{n}}^2\right)^n\right\|_Y\leq H.\]
\end{assumptionCC2}
\begin{assumptionCC2}[Commutator condition]
	\label{CC2commutator}
\[\left\|S_{t}^1S_{t}^2u-S_t^2S_t^1u\right\|_X\leq t\omega(t)\|u\|_Y\]
is satisfied for all $u\in Y$ and $t\in[0,\delta]$ with some $\delta>0$, and for a suitable $\omega:[0,\delta]\to \RR^+$ with $\int_0^\delta \frac{\omega(\tau)}{\tau}d\tau<+\infty$.
\end{assumptionCC2}
\begin{thrm}
	\label{Thrm_CG}
Under Assumptions CG\ref{Assumtpion_CG_1}-CG\ref{CC2commutator}  there exists a global semigroup $Q:[0,+\infty)\times X\to X$ such that for all $u\in Y$, there exists a constant $C_u$ such that for $t>0$
\[\frac{1}{t}\left\|Q(t)u-S_t^1S_t^2u\right\|_X\leq C_u\int_0^t\frac{\omega(\xi)}{\xi}d\xi.\]
\end{thrm}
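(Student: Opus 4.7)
\textbf{Proof plan for Theorem \ref{Thrm_CG}.} The strategy mirrors the K\"uhnemund--Wacker reduction of Section \ref{KW_case}: lift $S^1_t, S^2_t$ to Markov semigroups on a Polish measure space, verify Assumptions \ref{Assumption_equicontinuity_tightness}--\ref{Assumption_4_extended_commutator}, invoke Theorem \ref{main}, and extract the linear semigroup $Q(t)$ from the limit $\overline{\mathbb{P}}_t$. Given $u_0 \in X$, let $S := E_{u_0}$ be the smallest closed subspace of $X$ containing $u_0$ and invariant under both $(S^i_t)_{t\geq 0}$, equipped with $d(x,y) := \|x-y\|_X$; this is Polish. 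Define the lifts $P^i_t\mu := \int_S \delta_{S^i_t x}\,\mu(dx)$ and set $M_0 := \mathrm{span}_{\RR_+}\{\delta_u : u \in Y\}$ with $|\mu_0|_{M_0} := \int_S \|u\|_Y\,\mu_0(du)$. Since $Y$ is dense in $X$, $M_0$ is dense in $\MM^+(S)_{\BL}$; it is $P^i_t$-invariant because $S^i_t(Y) \subset Y$.

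The verification of the four Assumptions is parallel to Section \ref{KW_case}. Tightness of $\{P^i_t\mu : t \in [0,\delta]\}$ follows from strong continuity of $(S^i_t)$, and equicontinuity from $|U^i_t\phi(x)-U^i_t\phi(y)| \leq |\phi|_{L,d}\,\|S^i_t\|_X\,\|x-y\|_X$ together with the local boundedness of $\|S^i_t\|_X$ automatic for $C_0$-semigroups; this gives Assumption \ref{Assumption_equicontinuity_tightness}. Assumption \ref{Assumption_stability} is the $X$-Trotter bound in CG\ref{Assumtpion_CG_2}. For Assumption \ref{Assumption_commutator}, the metric $d_{\mathcal{E}(f)}$ is comparable to $d$ up to a factor depending on $|f|_{L,d}$ and the Trotter constant $H$, as in the KW case, so CG\ref{CC2commutator} yields
\[
\|P^1_tP^2_t\delta_u - P^2_tP^1_t\delta_u\|^*_{\BL,d_{\mathcal{E}(f)}} \leq \max(1, |f|_{L,d}\,M)\,t\,\omega(t)\,\|u\|_Y;
\]
integrating against $\mu_0 \in M_0$ produces $\omega_f(t,\mu_0) := \max(1, |f|_{L,d}\,M)\,\omega(t)\,|\mu_0|_{M_0}$, whose Dini integrability is precisely the condition of CG\ref{CC2commutator}. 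The $Y$-Trotter bound together with $\|S^i_t\|_Y \leq H$ in CG\ref{Assumtpion_CG_2} gives a uniform constant $C_f(\mu_0) = C$ independent of $f$ and $\mu_0$, yielding Assumption \ref{Assumption_4_extended_commutator}.

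Theorem \ref{main} then produces $\overline{\mathbb{P}}_t$. The Dirac set is closed in $\MM^+(S)_{\BL}$ (cf.\ the KW case), so $\overline{\mathbb{P}}_t\delta_u = \delta_{Q(t)u}$ for some map $Q(t) : E_{u_0} \to E_{u_0}$; linearity of the approximating products $(S^1_{t/n}S^2_{t/n})^n$ and the $X$-Trotter bound give linearity and boundedness of $Q(t)$. Consistency across varying $u_0 \in X$ defines $Q$ on all of $X$, strongly continuous by Proposition \ref{Proposition_semigroup}.

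The main obstacle is the quantitative rate $\tfrac{1}{t}\|Q(t)u - S^1_tS^2_tu\|_X \leq C_u \int_0^t \omega(\xi)/\xi\,d\xi$. Rather than invoking only the existence part of Theorem \ref{main}, I retrace the telescoping estimate from Lemma \ref{lemma subsequence}: applying Lemma \ref{lem_2 new} along the dyadic refinements with $n=1, k=2$ and summing from $l=0$ to $\infty$ (letting the upper index tend to infinity) gives
\[
\bigl|\bigl\langle \overline{\mathbb{P}}_t\delta_u - P^1_tP^2_t\delta_u,\, f\bigr\rangle\bigr| \leq \tfrac{1}{2}\,C_f\,t\,\|u\|_Y \sum_{l=1}^{\infty}\omega(t/2^l).
\]
A standard monotonicity comparison yields $\sum_{l=1}^{\infty}\omega(t/2^l) \leq (\ln 2)^{-1}\int_0^t \omega(\xi)/\xi\,d\xi$. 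Since $\delta_{3,f}, \delta_{4,f}, C_f(\mu_0)$ and $\omega_f(\cdot, \mu_0)$ can be chosen uniformly for $f$ in the unit ball of $\BL(S,d)$, the uniform part of Theorem \ref{main} promotes the weak bound to a $\|\cdot\|^*_{\BL,d}$-norm estimate (no Schur-type extraction needed). Finally the identification $\|\delta_u - \delta_v\|^*_{\BL,d} = 2d(u,v)/(2+d(u,v))$ (\cite{Hille2009_embedding}, Lemma 2.5) transfers the estimate into the ambient $X$-norm on $E_{u_0}$, with $C_u$ proportional to $\|u\|_Y$; patching over $u_0 \in X$ completes the proof. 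The hard part is precisely controlling the $f$-dependence of $C_f$ and the metric $d_{\mathcal{E}(f)}$ so that the summation argument delivers a norm-level quantitative bound with the correct scaling in $t$.
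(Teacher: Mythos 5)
Your proposal follows essentially the same route as the paper: lift $S^1_t,S^2_t$ to Markov semigroups on the separable invariant subspace, verify Assumptions \ref{Assumption_equicontinuity_tightness}--\ref{Assumption_4_extended_commutator} exactly as in the K\"uhnemund--Wacker reduction of Section \ref{KW_case} (same $M_0$, same $\omega_f(t,\mu_0)=\max(1,|f|_{L,d}M)\omega(t)|\mu_0|_{M_0}$, same $H$-bounds for the extended commutator condition), invoke Theorem \ref{main}, and read off $Q(t)$ from the action on Dirac measures. Your explicit derivation of the quantitative rate --- telescoping Lemma \ref{lem_2 new} with $n=2^l$, $k=2$ and the comparison $\sum_{l\geq 1}\omega(t/2^l)\leq(\ln 2)^{-1}\int_0^t\omega(\xi)\xi^{-1}d\xi$ --- is correct and in fact spells out a step the paper only gestures at ("a similar argument as in Section \ref{KW_case} yields Theorem \ref{Thrm_CG}"), modulo the small caveats that the telescoping from $l=0$ needs $t\leq 2\delta_f$ and that converting $\|\delta_x-\delta_y\|^*_{\BL,d}=2d(x,y)/(2+d(x,y))$ back to $\|x-y\|_X$ requires the bound to stay below $2$.
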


In fact,  \cite{Colombo_2009} Proposition 3.5 also includes a statement of convergence of so-called Euler polygonals to orbits of $Q$. The interested reader should consult \cite{Colombo_2009} for further details on this topic.

The construction in this case that allows us to conclude Theorem \ref{Thrm_CG} from our Theorem \ref{main} is highly similar to the K\"uhnemund-Wacker case discussed in the previous section. Therefore we state the main reasoning and give the immediate results. 

Let $u\in X$. We take $S=X_u$ where the latter is the smallest separable Banach space in $X$ that is invariant under $(S_t^i)_{t\geq 0}$, $i=1,2$, equipped with the metric induced by the norm on $X$.  Let $P^1_t$ and $P^2_t$ be lifts of $S^1_t$ and $S^2_t$ to $\MM^+(S)$:
\[P^i_t\delta_u:=\delta_{S_t^iu},\, P_t^i\mu:=\int_{\mathcal{U}}\delta_{S_t^iu}\mu(du), i=1,2.\]
Now we check if $P^1_t$ and $P^2_t$ satisfy Assumptions \ref{Assumption_equicontinuity_tightness}-\ref{Assumption_4_extended_commutator}.\\
As in Section \ref{KW_case}, because $(S^1_t)_{t\geq 0}$ and $(S^2_t)_{t\geq 0}$ are strongly continuous semigroups,  $(P^1_t)_{t\geq 0}$ and $(P^2_t)_{t\geq 0}$ are tight. Moreover, if $\phi\in \BL(S,d)$ and $v,w\in X_u$, and $U^1_t$ and $U^2_t$ are the dual operators of $P^1_t$ and $P_t^2$ respectively, then:
\begin{equation}\nonumber
\begin{array}{llllll}
\left|U^1_t\phi(v)-U^1_t\phi(w)\right|\leq|\phi|_L\cdot H\cdot\|v-w\|_X
\end{array}
\end{equation}
This yields the equicontinuity condition for $U^1_t$. Similarly equicontinuity for $U_t^2$ is established.  \\
A similar computation yields Assumption \ref{Assumption_stability}:
\begin{align*}
\nonumber
\left| \left[U^1_{\frac{t}{n}}U^2_{\frac{t}{n}}\right]^n\phi(v)-\left[U^1_{\frac{t}{n}}U^2_{\frac{t}{n}}\right]^n\phi(w)\right|&=\left|\phi\left[\left(S^2_{\frac{t}{n}}S^1_{\frac{t}{n}}\right)^nv\right]-\phi\left[\left(S^2_{\frac{t}{n}}S^1_{\frac{t}{n}}\right)^nw\right]\right|\\
&\leq |\phi|_L\cdot \left\|\left(S^2_{\frac{t}{n}}S^1_{\frac{t}{n}}\right)^n(v-w)\right\|_X
\leq |\phi|_L\cdot H \cdot \|v-w\|_X\\
\end{align*}
To check the  Commutator Condition in Assumption \ref{Assumption_commutator}, let $f\in \BL(S,d)$, put $M_0:=\mathrm{span}\{\delta_v|v\in Y\cap X_u\}$ and $|\mu_0|_{M_0}$ as in (\ref{M0norm}). Then define 
\[\omega_f(t,\mu_0):=\max(1,|f|_{L,d}M)\omega(t)|\mu_0|_{M_0}.\]
Commutator Condition CG\ref{CC2commutator} yields
\begin{align*}
\nonumber
\left\|P^1_tP^2_t\delta_u-P^2_tP^1_t\delta_u\right\|^*_{\BL,d_{\mathcal{E}(f)}}\leq \max(1,|f|_{L,d}M)t\omega(t)\|u\|_Y
\end{align*}
as before, which established Assumption \ref{Assumption_commutator}. Note that $\omega_f$ can be chosen uniformly for $f$ in the unit ball of $\BL(S,d)$.
 
 Assumption \ref{Assumption_4_extended_commutator} is obtained from the estimate
\begin{equation}
\nonumber
\left|\left[P^1_{\frac{t}{n}}P^2_{\frac{t}{n}}\right]^n\delta_{u}\right|_{M_0}=\left|\delta_{\left[S_{\frac{t}{n}}^1S^2_{\frac{t}{n}}\right]^n u}\right|_{M_0}=\left\|\left[S_{\frac{t}{n}}^2S^1_{\frac{t}{n}}\right]^nu\right\|_X\leq H\|u\|_X,
\end{equation}
which yields
\begin{align*}
\nonumber
\left|\left[P^1_{\frac{t}{n}}P^2_{\frac{t}{n}}\right]^n\mu_0\right|_{M_0}
\leq H|\mu_0|_{M_0}.
\end{align*}
and
\begin{equation}
\nonumber
\left|P^1_t\delta_\phi\right|_{M_0}=\left|\delta_{S_t^1u}\right|_{M_0}=\|S^1_tu\|_Y\leq H\|u\|_Y, \quad \left|P^2_t\delta_\phi\right|_{M_0}\leq H\|u\|_Y
\end{equation}
which yields
\begin{align*}
\nonumber
\left|P^1_t\mu_0\right|_{M_0}\leq H|\mu_0|_{M_0}\text{ and } \left|P^2_t\mu_0\right|_{M_0}\leq H|\mu_0|_{M_0}.
\end{align*}
Thus, the Lie-Trotter formula holds for $(P^1_t)_{t\geq 0}$ and $(P^2_t)_{t\geq 0}$. A similar argument as in Section \ref{KW_case} yields Theorem \ref{Thrm_CG}.
\appendix
\section*{Appendices}
\addcontentsline{toc}{section}{Appendices}
\renewcommand{\thesubsection}{\Alph{subsection}}

\subsection{Proof of Lemma \ref{lem_est}}
\label{appendix_proofs}
\label{App_2_Proof_of_Lemma_lem_est}
	\label{appendix_1}

		(a) 	We will check it by induction on $j$. Let $j=1$. Then the left hand side in the equation \ref{lem_est}, (a) is of the form
		\begin{equation}
		\nonumber
		\begin{array}{llll}
		L=	P^1_{\frac{t}{m}}P^2_\frac{t}{m}-P^2_{\frac{t}{m}}P^1_{\frac{t}{m}},
		\end{array}
		\end{equation}
		while the right hand side is 
		\begin{equation}
		\nonumber
		\begin{array}{llll}
		R=\sum_{l=0}^{0}P^2_{\frac{lt}{m}}\left(P^1_{\frac{t}{m}}P^2_{\frac{t}{m}}-P^2_{\frac{t}{m}}P^1_{\frac{t}{m}}\right)P^2_{\frac{(1-1-l)t}{m}}=P^2_{\frac{0t}{m}}\left(P^1_{\frac{t}{m}}P^2_{\frac{t}{m}}-P^2_{\frac{t}{m}}P^1_{\frac{t}{m}}\right)P^2_{\frac{(1-1-0)t}{m}}=L
		\end{array}
		\end{equation}
		Assume that (a) holds for $j-1$:
		\begin{equation}
		\nonumber
		\begin{array}{llll}
		P^1_{\frac{t}{m}}P^2_\frac{(j-1)t}{m}-P^2_{\frac{(j-1)t}{m}}P^1_{\frac{t}{m}}=\sum_{l=0}^{j-2}P^2_{\frac{lt}{m}}\left(P^1_{\frac{t}{m}}P^2_{\frac{t}{m}}-P^2_{\frac{t}{m}}P^1_{\frac{t}{m}}\right)P^2_{\frac{(j-2-l)t}{m}}
		\end{array}
		\end{equation}
		Then for $j$:
		\begin{equation}
		\nonumber
		\begin{array}{llll}
		&L&=&P^1_{\frac{t}{m}}P^2_\frac{jt}{m}-P^2_{\frac{jt}{m}}P^1_{\frac{t}{m}}\\
		&&=&\left(P^1_{\frac{t}{m}}P^2_\frac{(j-1)t}{m}-P^2_{\frac{(j-1)t}{m}}P^1_{\frac{t}{m}} \right)P^2_{\frac{t}{m}}+P^2_{\frac{(j-1)t}{m}}P^1_{\frac{t}{m}}P^2_{\frac{t}{m}}-P^2_{\frac{jt}{m}}P^1_{\frac{t}{m}}\\
		&&=&\left(\sum_{l=0}^{j-2}P^2_{\frac{lt}{m}}\left(P^1_{\frac{t}{m}}P^2_{\frac{t}{m}}-P^2_{\frac{t}{m}}P^1_{\frac{t}{m}}\right)P^2_{\frac{(j-2-l)t}{m}} \right)P^2_{\frac{t}{m}}+P^2_{\frac{(j-1)t}{m}}\left(P^1_{\frac{t}{m}}P^2_{\frac{t}{m}}-P^2_{\frac{t}{m}}P^1_{\frac{t}{m}}\right)\\
		&&=&\sum_{l=0}^{j-2}P^2_{\frac{lt}{m}}\left(P^1_{\frac{t}{m}}P^2_{\frac{t}{m}}-P^2_{\frac{t}{m}}P^1_{\frac{t}{m}}\right)P^2_{\frac{(j-1-l)t}{m}} +P^2_{\frac{(j-1)t}{m}}\left(P^1_{\frac{t}{m}}P^2_{\frac{t}{m}}-P^2_{\frac{t}{m}}P^1_{\frac{t}{m}}\right)
		\\
		&&=&\sum_{l=0}^{j-1}P^2_{\frac{lt}{m}}\left(P^1_{\frac{t}{m}}P^2_{\frac{t}{m}}-P^2_{\frac{t}{m}}P^1_{\frac{t}{m}}\right)P^2_{\frac{(j-1-l)t}{m}} 		=R
		\end{array}
		\end{equation}
		(b) 	We will check it by induction on $k$. Let $k=2$.
		\begin{equation}
		\nonumber
		\begin{array}{llll}
		L=P^1_{\frac{2t}{m}}P^2_\frac{2t}{m}-\left(P^1_{\frac{t}{m}}P^2_{\frac{t}{m}}\right)^2
		\end{array}
		\end{equation}
		\begin{equation}
		\nonumber
		\begin{array}{llll}
		&			R&=&\sum_{j=1}^{1}P^1_{\frac{tj}{m}}\left(P^1_{\frac{t}{m}}P^2_\frac{jt}{m}-P^2_{\frac{jt}{m}}P^1_{\frac{t}{m}}\right)P^2_{\frac{t}{m}}\left(P^1_{\frac{t}{m}}P^2_{\frac{t}{m}}\right)^{2-1-j}\\
		&&=&P^1_{\frac{t}{m}}\left(P^1_{\frac{t}{m}}P^2_\frac{t}{m}-P^2_{\frac{t}{m}}P^1_{\frac{t}{m}}\right)P^2_{\frac{t}{m}}=L
		\end{array}
		\end{equation}
		Assume that for $k-1$ we have:
		\begin{equation}
		\nonumber
		\begin{array}{llll}
		P^1_{\frac{(k-1)t}{m}}P^2_\frac{(k-1)t}{m}-\left(P^1_{\frac{t}{m}}P^2_{\frac{t}{m}}\right)^{k-1}=\sum_{j=1}^{k-2}P^1_{\frac{tj}{m}}\left(P^1_{\frac{t}{m}}P^2_\frac{jt}{m}-P^2_{\frac{jt}{m}}P^1_{\frac{t}{m}}\right)P^2_{\frac{t}{m}}\left(P^1_{\frac{t}{m}}P^2_{\frac{t}{m}}\right)^{k-2-j}
		\end{array}
		\end{equation}
		Then for $k$ we have:
		\begin{equation}
		\nonumber
		\begin{array}{llll}
		&L&=&P^1_{\frac{kt}{m}}P^2_\frac{kt}{m}-\left(P^1_{\frac{t}{m}}P^2_{\frac{t}{m}}\right)^k\\
		&&=&\left[P^1_{\frac{(k-1)t}{m}}P^2_{\frac{(k-1)t}{m}}-\left(\PP\PQ\right)^{k-1}\right]\PP\PQ-P^1_{\frac{(k-1)t}{m}}P^2_{\frac{(k-1)t}{m}}\PP\PQ+P^1_{\frac{kt}{m}}P^2_{\frac{kt}{m}}\\
		&&=&\left[\sum_{j=1}^{k-2}P^1_{\frac{tj}{m}}\left(P^1_{\frac{t}{m}}P^2_\frac{jt}{m}-P^2_{\frac{jt}{m}}P^1_{\frac{t}{m}}\right)P^2_{\frac{t}{m}}\left(P^1_{\frac{t}{m}}P^2_{\frac{t}{m}}\right)^{k-2-j}\right]\PP\PQ\\
		&&&-P^1_{\frac{(k-1)t}{m}}\left(P^2_{\frac{(k-1)t}{m}}\PP-P^1_{\frac{t}{m}}P^2_{\frac{(k-1)t}{m}}\right)\PQ\\
		&&=&\sum_{j=1}^{k-1}P^1_{\frac{tj}{m}}\left(P^1_{\frac{t}{m}}P^2_\frac{jt}{m}-P^2_{\frac{jt}{m}}P^1_{\frac{t}{m}}\right)P^2_{\frac{t}{m}}\left(P^1_{\frac{t}{m}}P^2_{\frac{t}{m}}\right)^{k-1-j}\\
		&&=&R
		\end{array}
		\end{equation}
		(c) Let $n=1$. Then
		\begin{equation}
		\nonumber
		\begin{array}{llll}
		L=	P^1_{\frac{kt}{m}}P^2_\frac{kt}{m}-\left(P^1_{\frac{t}{m}}P^2_{\frac{t}{m}}\right)^{ k}
		\end{array}
		\end{equation}
		\begin{equation}
		\nonumber
		\begin{array}{llll}
		R=\left(P^1_{\frac{kt}{m}}P^2_{\frac{kt}{m}}\right)^0\left[P^1_{\frac{kt}{m}}P^2_{\frac{kt}{m}}-\left(\PP\PQ\right)^k\right]\left(\PP\PQ\right)^{k\cdot(1-1-0)}=L
		\end{array}
		\end{equation}
		Now let's assume that
		\begin{equation}
		\nonumber
		\begin{array}{llll}
		\left(P^1_{\frac{kt}{m}}P^2_\frac{kt}{m}\right)^{n-1}-\left(P^1_{\frac{t}{m}}P^2_{\frac{t}{m}}\right)^{(n-1)\cdot k}=\\=\sum_{i=0}^{n-2}\left(P^1_{\frac{kt}{m}}P^2_{\frac{kt}{m}}\right)^i\left[P^1_{\frac{kt}{m}}P^2_{\frac{kt}{m}}-\left(\PP\PQ\right)^k\right]\left(\PP\PQ\right)^{k\cdot(n-2-i)}
		\end{array}
		\end{equation}
		and let us check for $n$:
		\begin{equation}
		\nonumber
		\begin{array}{llll}
		L=\left(P^1_{\frac{kt}{m}}P^2_\frac{kt}{m}\right)^{n}-\left(P^1_{\frac{t}{m}}P^2_{\frac{t}{m}}\right)^{n\cdot k}\\
		=\left(\left(P^1_{\frac{kt}{m}}P^2_\frac{kt}{m}\right)^{n-1}-\left(P^1_{\frac{t}{m}}P^2_{\frac{t}{m}}\right)^{(n-1)\cdot k}\right)\left(\PP\PQ\right)^k-\left(P^1_{\frac{kt}{m}}P^2_\frac{kt}{m}\right)^{n-1}\left(\PP\PQ\right)^k\\
		+\left(P^1_{\frac{kt}{m}}P^2_\frac{kt}{m}\right)^{n}\\
		=\left(\sum_{i=0}^{n-2}\left(P^1_{\frac{kt}{m}}P^2_{\frac{kt}{m}}\right)^i\left(P^1_{\frac{kt}{m}}P^2_{\frac{kt}{m}}-\left(\PP\PQ\right)^k\right)\left(\PP\PQ\right)^{k\cdot(n-2-i)}\right)\left(\PP\PQ\right)^k-\\
		-\left(P^1_{\frac{kt}{m}}P^2_\frac{kt}{m}\right)^{n-1}\left(\PP\PQ\right)^k+\left(P^1_{\frac{kt}{m}}P^2_\frac{kt}{m}\right)^{n}\\
		=\sum_{i=0}^{n-2}\left(P^1_{\frac{kt}{m}}P^2_{\frac{kt}{m}}\right)^i\left(P^1_{\frac{kt}{m}}P^2_{\frac{kt}{m}}-\left(\PP\PQ\right)^k\right)\left(\PP\PQ\right)^{k\cdot(n-1-i)}+\\
		+\left(P^1_{\frac{kt}{m}}P^2_\frac{kt}{m}\right)^{n-1}\left[P^1_{\frac{kt}{m}}P^2_\frac{kt}{m}-\left(\PP\PQ\right)^k\right]=\\
		=\sum_{i=0}^{n-1}\left(P^1_{\frac{kt}{m}}P^2_{\frac{kt}{m}}\right)^i\left[P^1_{\frac{kt}{m}}P^2_{\frac{kt}{m}}-\left(\PP\PQ\right)^k\right]\left(\PP\PQ\right)^{k\cdot(n-1-i)}=R\quad\quad\qed
		\end{array}
		\end{equation}	
\subsection{Proof of Lemma \ref{lem_2 new}}
	\label{App_3_Proof_of_Lemma_lem_2 new}
	 Let $n\in\NN$, $k\in\NN$ and $m:=kn$ be such that $\frac{t}{nk}\in[0,\delta_f]$. Then by Lemma \ref{lem_est} (c) we get 
		\begin{equation}
		\nonumber
		\begin{array}{llll}
		\left|\left\langle \left[P^1_{\frac{kt}{m}}P^2_\frac{kt}{m}\right]^n\mu_0-\left[P^1_{\frac{t}{m}}P^2_{\frac{t}{m}}\right]^{n\cdot k}\mu_0,f\right\rangle\right|\\
		=\left|\left\langle \sum_{i=0}^{n-1}\left[P^1_{\frac{kt}{m}}P^2_{\frac{kt}{m}}\right]^i\left(P^1_{\frac{kt}{m}}P^2_{\frac{kt}{m}}-\left[\PP\PQ\right]^k\right)\left[\PP\PQ\right]^{k\cdot(n-1-i)}\mu,f\right\rangle\right|\\
		\leq\sum_{i=0}^{n-1}\left|\left\langle \left[P^1_{\frac{kt}{m}}P^2_{\frac{kt}{m}}\right]^i\left(P^1_{\frac{kt}{m}}P^2_{\frac{kt}{m}}-\left[\PP\PQ\right]^k\right)\left[\PP\PQ\right]^{k\cdot(n-1-i)}\mu,f\right\rangle\right|=(**)\\
		\end{array}
		\end{equation}
		by Lemma \ref{lem_est} (b)
		\begin{equation}
		\nonumber
		\begin{array}{llll}
		(**)=\sum_{i=0}^{n-1}\left|\left\langle \left[P^1_{\frac{kt}{m}}P^2_{\frac{kt}{m}}\right]^i\left(\sum_{j=1}^{k-1}P^1_{\frac{tj}{m}}\left(P^1_{\frac{t}{m}}P^2_\frac{jt}{m}-P^2_{\frac{jt}{m}}P^1_{\frac{t}{m}}\right)P^2_{\frac{t}{m}}\times\right.\right.\right.\\
		\left.\left.\left.\times\left[P^1_{\frac{t}{m}}P^2_{\frac{t}{m}}\right]^{k-1-j}\right)\left[\PP\PQ\right]^{k\cdot(n-1-i)}\mu_0,f\right\rangle\right|\\
		\leq \sum_{i=0}^{n-1}\sum_{j=1}^{k-1}\left|\left\langle \left[P^1_{\frac{kt}{m}}P^2_{\frac{kt}{m}}\right]^iP^1_{\frac{tj}{m}}\left(P^1_{\frac{t}{m}}P^2_\frac{jt}{m}-P^2_{\frac{jt}{m}}P^1_{\frac{t}{m}}\right)P^2_{\frac{t}{m}}\times\right.\right.\\
		\left.\left.\times\left[P^1_{\frac{t}{m}}P^2_{\frac{t}{m}}\right]^{k(n-i)-1-j}\mu_0,f\right\rangle\right|=(***)\\
		\end{array}
		\end{equation}
		by Lemma \ref{lem_est} (a) we get
		\begin{equation}
		\nonumber
		\begin{array}{llll}
		(***)=\sum_{i=0}^{n-1}\sum_{j=1}^{k-1}\left|\left\langle \left[P^1_{\frac{kt}{m}}P^2_{\frac{kt}{m}}\right]^iP^1_{\frac{tj}{m}}\left(\sum_{l=0}^{j-1}P^2_{\frac{lt}{m}}\left(P^1_{\frac{t}{m}}P^2_{\frac{t}{m}}-P^2_{\frac{t}{m}}P^1_{\frac{t}{m}}\right)P^2_{\frac{(j-1-l)t}{m}}\right)P^2_{\frac{t}{m}}\times\right.\right.\\
		\left.\left.\times\left[P^1_{\frac{t}{m}}P^2_{\frac{t}{m}}\right]^{k(n-i)-1-j}\mu_0,f\right\rangle\right|\\
		\leq \sum_{i=0}^{n-1}\sum_{j=1}^{k-1}\sum_{l=0}^{j-1}\left|\left\langle \left[P^1_{\frac{kt}{m}}P^2_{\frac{kt}{m}}\right]^iP^1_{\frac{tj}{m}}\left(P^2_{\frac{lt}{m}}\left(P^1_{\frac{t}{m}}P^2_{\frac{t}{m}}-P^2_{\frac{t}{m}}P^1_{\frac{t}{m}}\right)P^2_{\frac{(j-1-l)t}{m}}\right)P^2_{\frac{t}{m}}\times\right.\right.\\
		\left.\left.\times\left[P^1_{\frac{t}{m}}P^2_{\frac{t}{m}}\right]^{k(n-i)-1-j}\mu_0,f\right\rangle\right|\\
		= \sum_{i=0}^{n-1}\sum_{j=1}^{k-1}\sum_{l=0}^{j-1}\left|\left\langle \left(P^1_{\frac{t}{m}}P^2_{\frac{t}{m}}-P^2_{\frac{t}{m}}P^1_{\frac{t}{m}}\right)P^2_{\frac{(j-l)t}{m}}\left[P^1_{\frac{t}{m}}P^2_{\frac{t}{m}}\right]^{k(n-i)-1-j}\mu_0,\right.\right.\\
		\left.\left.U^2_{\frac{lt}{m}}U^1_{\frac{tj}{m}}\left[U^2_{\frac{kt}{m}}U^1_{\frac{kt}{m}}\right]^if\right\rangle\right|\\
		\end{array}
		\end{equation}
		For every $i,j,l\in\NN$ we get
		\[g^n_{i,j,l}:=U^2_{\frac{lt}{m}}U^1_{\frac{tj}{m}}\left[U^2_{\frac{kt}{m}}U^1_{\frac{kt}{m}}\right]^if\in \mathcal{E}(f).\]
		Let $\nu^n_{i,j,l}:=P^2_{\frac{(j-l)t}{m}}\left[P^1_{\frac{t}{m}}P^2_{\frac{t}{m}}\right]^{k(n-i)-1-j}\mu$. Then $\nu^n_{i,j,l}\in M_0$. Note that $\left\|g^n_{i,j,l}\right\|_{\BL,d_{\mathcal{E}(f)}}\leq 1$\\

		Using Assumption \ref{Assumption_4_extended_commutator} we get:
		\begin{equation}
		\nonumber
		\begin{array}{llll}
		\sum_{i=0}^{n-1}\sum_{j=1}^{k-1}\sum_{l=0}^{j-1}\left|\left\langle \left(P^1_{\frac{t}{m}}P^2_{\frac{t}{m}}-P^2_{\frac{t}{m}}P^1_{\frac{t}{m}}\right)\nu^n_{i,j,l},g^n_{i,j,l}\right\rangle\right|\\
		\leq \sum_{i=0}^{n-1}\sum_{j=1}^{k-1}\sum_{l=0}^{j-1}\left|\left\langle \left(P^1_{\frac{t}{m}}P^2_{\frac{t}{m}}-P^2_{\frac{t}{m}}P^1_{\frac{t}{m}}\right)\nu^n_{i,j,l},g^n_{i,j,l}\right\rangle\right|\\
		\leq \sum_{i=0}^{n-1}\sum_{j=1}^{k-1}\sum_{l=0}^{j-1}\left\|\left(P^1_{\frac{t}{m}}P^2_{\frac{t}{m}}-P^2_{\frac{t}{m}}P^1_{\frac{t}{m}}\right)\nu^n_{i,j,l}\right\|^*_{\BL,d_\mathcal{E(f)}}\cdot\left\|g^n_{i,j,l}\right\|_{\BL,d_\mathcal{E(f)}}\\
		\leq \sum_{i=0}^{n-1}\sum_{j=1}^{k-1}\sum_{l=0}^{j-1}\frac{t}{m}\omega_f\left(\frac{t}{m},P^2_{\frac{(j-l)t}{m}}\left[P^1_{\frac{t}{m}}P^2_{\frac{t}{m}}\right]^{k(n-i)-1-j}\mu_0\right)\\
		\leq \frac{t}{m}\sum_{i=0}^{n-1}\sum_{j=1}^{k-1}\sum_{l=0}^{j-1}C_2(\mu_0)\omega_f\left(\frac{t}{m},\mu_0\right)\\
		\leq C_f(\mu_0) \frac{t}{m}\omega_f\left(\frac{t}{m},\mu_0\right)\sum_{i=0}^{n-1}\sum_{j=1}^{k-1}\sum_{l=0}^{j-1}1\\
		=C_f(\mu_0)\frac{t}{m}\omega_f\left(\frac{t}{m},\mu_0\right)\frac{n(k-1)k}{2}
		\end{array}
		\end{equation}
		So with $m=nk$ we get the result. 	\qed

\newpage

\end{document}